\newtheorem{Theorem}{Theorem}[section]
\newtheorem{Proposition}[Theorem]{Proposition}
\newtheorem{Lemma}[Theorem]{Lemma}
\newtheorem{Corollary}[Theorem]{Corollary}
\newtheorem{ClaimC}{Claim}
\newtheorem{ClaimFinShuffle}{Claim}
\newtheorem{ClaimInfShuffle}{Claim}
\newtheorem*{Claim*}{Claim}
\theoremstyle{definition}
\newtheorem{Definition}[Theorem]{Definition}
\newtheorem{Example}[Theorem]{Example}
\DeclareFontFamily{U}{mathb}{\hyphenchar\font45}
\DeclareFontShape{U}{mathb}{m}{n}{
<-6> mathb5 <6-7> mathb6 <7-8> mathb7
<8-9> mathb8 <9-10> mathb9
<10-12> mathb10 <12-> mathb12
}{}
\DeclareSymbolFont{mathb}{U}{mathb}{m}{n}
\DeclareMathSymbol{\pprec}{\mathrel}{mathb}{"CE}
\DeclareMathSymbol{\ssucc}{\mathrel}{mathb}{"CF}
\DeclareFontFamily{U}{mathb}{\hyphenchar\font45}
\DeclareFontShape{U}{mathb}{m}{n}{
      <5> <6> <7> <8> <9> <10> gen * mathb
      <10.95> mathb10 <12> <14.4> <17.28> <20.74> <24.88> mathb12
      }{}
\DeclareSymbolFont{mathb}{U}{mathb}{m}{n}
\DeclareMathSymbol{\monus}{2}{mathb}{"01}
\DeclareMathOperator{\dom}{\mathrm{dom}}
\DeclareMathOperator{\id}{\mathrm{id}}
\DeclareMathOperator{\condF}{\mathbf{c}_\textup{F}}
\DeclareMathOperator{\condSum}{\mathbf{c}_\Sigma}
\newcommand{\andd}{\wedge}
\newcommand{\orr}{\vee}
\newcommand{\la}{\langle}
\newcommand{\ra}{\rangle}
\newcommand{\da}{{\downarrow}}
\newcommand{\ua}{{\uparrow}}
\newcommand{\imp}{\rightarrow}
\newcommand{\Imp}{\Rightarrow}
\newcommand{\biimp}{\leftrightarrow}
\newcommand{\Biimp}{\Leftrightarrow}
\newcommand{\llb}{\llbracket}
\newcommand{\rrb}{\rrbracket}
\newcommand{\iso}{\cong}
\newcommand{\Nb}{\mathbb{N}}
\newcommand{\Zb}{\mathbb{Z}}
\newcommand{\Qb}{\mathbb{Q}}
\newcommand{\rst}{{\restriction}}
\newcommand{\keq}{\simeq}
\newcommand{\forae}{\forall^\infty}
\newcommand{\existsinf}{\exists^\infty}
\newcommand{\nmodels}{\not\models}
\newcommand{\mc}[1]{\mathcal{#1}}
\newcommand{\mf}[1]{\mathfrak{#1}}
\newcommand{\msf}[1]{\mathsf{#1}}
\newcommand{\ol}[1]{\overline{#1}}
\newcommand{\ora}[1]{\overrightarrow{#1}}
\newcommand{\wh}[1]{\widehat{#1}}
\newcommand{\leqT}{\leq_\mathrm{T}}
\newcommand{\equivT}{\equiv_\mathrm{T}}
\newcommand{\std}{\mathrm{std}}
\newcommand{\nonstd}{\mathrm{nonstd}}
   \def\MR#1{}
\title{On cohesive powers of linear orders}
\author[Dimitrov]{Rumen Dimitrov}
\address{Department of Mathematics \& Philosophy\\
Western Illinois University\\
476 Morgan Hall\\
1 University Circle\\
Macomb, IL 61455\\
USA}
\email{rd-dimitrov@wiu.edu}
\urladdr{http://www.wiu.edu/users/rdd104/}
\author[Harizanov]{Valentina Harizanov}
\address{Department of Mathematics\\
The George Washington University\\
Phillips Hall\\
801 22\textsuperscript{nd} St.\ NW\\
Washington, DC 20052\\
USA
}
\email{harizanv@gwu.edu}
\urladdr{https://home.gwu.edu/~harizanv/}
\author[Morozov]{Andrey Morozov}
\address{Sobolev Institute of Mathematics\\
4 Acad.\ Koptyug avenue\\
630090 Novosibirsk\\
Russia}
\email{morozov@math.nsc.ru}
\urladdr{http://www.math.nsc.ru/~asm256/}
\author[Shafer]{Paul Shafer}
\address{School of Mathematics\\
University of Leeds\\
Leeds\\
LS2 9JT\\
United Kingdom}
\email{p.e.shafer@leeds.ac.uk}
\urladdr{http://www1.maths.leeds.ac.uk/~matpsh/}
\author[Soskova]{Alexandra A.\ Soskova}
\address{Department of Mathematical Logic and Applications\\
Faculty of Mathematics and Informatics\\
Sofia University\\
5 James Bourchier Blvd.\\
Sofia, 1164\\
Bulgaria}
\email{asoskova@fmi.uni-sofia.bg}
\urladdr{https://store.fmi.uni-sofia.bg/fmi/logic/asoskova/index.html}
\author[Vatev]{Stefan V.\ Vatev}
\address{Department of Mathematical Logic and Applications\\
Faculty of Mathematics and Informatics\\
Sofia University\\
5 James Bourchier Blvd.\\
Sofia, 1164\\
Bulgaria}
\email{stefanv@fmi.uni-sofia.bg}
\urladdr{https://store.fmi.uni-sofia.bg/fmi/logic/stefanv/}
\date{\today}
\begin{document}

\begin{abstract}
\emph{Cohesive powers} of computable structures are effective analogs of ultrapowers, where cohesive sets play the role of ultrafilters.  Let $\omega$, $\zeta$, and $\eta$ denote the respective order-types of the natural numbers, the integers, and the rationals when thought of as linear orders.  We investigate the cohesive powers of computable linear orders, with special emphasis on computable copies of $\omega$.  If $\mc{L}$ is a computable copy of $\omega$ that is computably isomorphic to the usual presentation of $\omega$, then every cohesive power of $\mc{L}$ has order-type $\omega + \zeta\eta$.  However, there are computable copies of $\omega$, necessarily not computably isomorphic to the usual presentation, having cohesive powers not elementarily equivalent to $\omega + \zeta\eta$.  For example, we show that there is a computable copy of $\omega$ with a cohesive power of order-type $\omega + \eta$.  Our most general result is that if $X \subseteq \Nb \setminus \{0\}$ is a Boolean combination of $\Sigma_2$ sets, thought of as a set of finite order-types, then there is a computable copy of $\omega$ with a cohesive power of order-type $\omega + \bm{\sigma}(X \cup \{\omega + \zeta\eta + \omega^*\})$, where $\bm{\sigma}(X \cup \{\omega + \zeta\eta + \omega^*\})$ denotes the shuffle of the order-types in $X$ and the order-type $\omega + \zeta\eta + \omega^*$.  Furthermore, if $X$ is finite and non-empty, then there is a computable copy of $\omega$ with a cohesive power of order-type $\omega + \bm{\sigma}(X)$.
\end{abstract}

\maketitle

\section{Introduction}

The ultimate inspiration for this work is Skolem's 1934 construction of a countable non-standard model of arithmetic~\cite{Skolem}.  Skolem's construction can be described roughly as follows.  For sets $X, Y \subseteq \Nb$, write $X \subseteq^* Y$ if $X \setminus Y$ is finite.  First, fix an infinite set $C \subseteq \Nb$ that is \emph{cohesive} for the collection of arithmetical sets:  for every arithmetical $A \subseteq \Nb$, either $C \subseteq^* A$ or $C \subseteq^* \ol{A}$.  Next, define an equivalence relation $=_C$ on the arithmetical functions $f \colon \Nb \imp \Nb$ by $f =_C g$ if and only if $C \subseteq^* \{n : f(n) = g(n)\}$.  Then define a structure on the $=_C$\nobreakdash-equivalence classes $[f]$ by $[f] + [g] = [f + g]$, $[f] \times [g] = [f \times g]$ (where $f + g$ and $f \times g$ are computed pointwise), and $[f] < [g] \;\Biimp\; C \subseteq^* \{n : f(n) < g(n)\}$.  Using the arithmetical cohesiveness of $C$, one then shows that this structure is elementarily equivalent to $(\Nb, +, \times, <)$.  The structure is countable because there are only countably many arithmetical functions, and it has non-standard elements, such as the element represented by the identity function.  See~\cite{DimitrovHarizanovHandbook} for a further discussion of Skolem's model.

Think of Skolem's construction as a more effective analog of an ultrapower construction.  Instead of building a structure from all functions $f \colon \Nb \imp \Nb$, Skolem builds a structure from only the arithmetical functions $f$.  The arithmetically cohesive set $C$ plays the role of the ultrafilter.  Feferman, Scott, and Tennenbaum~\cite{FefermanScottTennenbaum} investigate the question of whether Skolem's construction can be made more effective by assuming that $C$ is only \emph{r-cohesive} (i.e., cohesive for the collection of computable sets) and by restricting to computable functions $f$.  They answer the question negatively by showing that it is not even possible to obtain a model of Peano arithmetic in this way.  Lerman~\cite{LermanCo-r-Max} investigates the situation further and shows that if one restricts to \emph{cohesive} sets $C$ (i.e., cohesive for the collection of c.e.\ sets) that are co-c.e.\ and to computable functions $f$, then the first-order theory of the structure obtained is exactly determined by the many-one degree of $C$.  Additional results in this direction appear in~\cites{HirschfeldModels, HirschfeldWheelerBook}.

Dimitrov~\cite{DimitrovCohPow} generalizes the effective ultrapower construction to arbitrary computable structures.  These \emph{cohesive powers} of computable structures are studied in~\cites{DimitrovLattices, DimitrovHarizanov, DimitrovHarizanovMillerMourad} in relation to the lattice of c.e.\ subspaces, modulo finite dimension, of a fixed computable infinite dimensional vector space over the field $\Qb$.  In this work, we investigate a question dual to the question studied by Lerman.  Lerman fixes a computable presentation of a computable structure (indeed, all computable presentations of the standard model of arithmetic are computably isomorphic) and studies the effect that the choice of the cohesive set has on the resulting cohesive power.  Instead of fixing a computable presentation of a structure and varying the cohesive set, we fix a computably presentable structure and a cohesive set, and then we vary the structure's computable presentation.  We focus on linear orders, with special emphasis on computable presentations of $\omega$.  We choose to work with linear orders because they are a good source of non-computably categorical structures and because the setting is simple enough to be able to completely describe certain cohesive powers up to isomorphism.  This work is a greatly expanded version of the preliminary work of~\cite{CohPowCiE}.

Our main results are the following.  Below, $\omega$, $\zeta$, and $\eta$ denote the respective order-types of the natural numbers, the integers, and the rationals.  For each $k \geq 1$, $\bm{k}$ denotes the order-type of the $k$\nobreakdash-element linear order.
\begin{itemize}
\item If $C$ is cohesive and $\mc{L}$ is a computable copy of $\omega$ that is computably isomorphic to the usual presentation of $\omega$ (i.e., the immediate successor relation of $\mc{L}$ is computable), then the cohesive power $\prod_C \mc{L}$ has order-type $\omega + \zeta\eta$ (Theorem~\ref{thm-StdCohPow}).

\medskip

\item If $C$ is co-c.e.\ and cohesive and $\mc{L}$ is a computable copy of $\omega$, then the finite condensation of the cohesive power $\prod_C \mc{L}$ has order-type $\bm{1} + \eta$ (Theorem~\ref{thm-CoCeDenseCond}; see Definition~\ref{def-FinCond} for the definition of \emph{finite condensation}).

\medskip

\item If $C$ is co-c.e.\ and cohesive, then there is a computable copy $\mc{L}$ of $\omega$ where the cohesive power $\prod_C \mc{L}$ has order-type $\omega + \eta$ (Corollary~\ref{cor-DenseNonstd}).

\medskip

\item More generally, if $C$ is co-c.e.\ and cohesive and $X \subseteq \Nb \setminus \{0\}$ is a Boolean combination of $\Sigma_2$ sets, thought of as a set of finite order-types, then there is a computable copy $\mc{L}$ of $\omega$ where the cohesive power $\prod_C \mc{L}$ has order-type $\omega + \bm{\sigma}(X \cup \{\omega + \zeta\eta + \omega^*\})$.  Here $\omega^*$ denotes the reverse of $\omega$, and $\bm{\sigma}$ denotes the shuffle operation of Definition~\ref{def-Shuffle}.  Furthermore, if $X$ is finite and non-empty, then there is a computable copy $\mc{L}$ of $\omega$ where the cohesive power $\prod_C \mc{L}$ has order-type $\omega + \bm{\sigma}(X)$ (Theorem~\ref{thm-BCSigma2Shuffle}).
\end{itemize}

This work also serves to compare and contrast properties of cohesive powers with those of classical ultrapowers.  The key points are the following.  Recall that a computable structure is \emph{decidable} if its elementary diagram is computable and is \emph{$n$\nobreakdash-decidable} if its $\Sigma_n$\nobreakdash-elementary diagram is computable.  These definitions are discussed in more detail in Section~\ref{sec-ProdAndPow}.

\begin{itemize}
\item Classically, an ultrapower of a structure is elementarily equivalent to the base structure by {\L}o\'{s}'s theorem.  Effectively, {\L}o\'{s}'s theorem holds for cohesive powers of decidable structures (Corollary~\ref{cor-DecLos}).  For cohesive powers of $n$\nobreakdash-decidable structures, {\L}o\'{s}'s theorem need only hold up to $\Delta_{n+3}$\nobreakdash-expressible sentences.  In fact, every $\Sigma_{n+3}$ sentence true of an $n$\nobreakdash-decidable structure is also true of all of its cohesive powers (Theorem~\ref{thm-LosGeneral}), but this is optimal in general (Corollary~\ref{cor-LosSigma3Tight}).

\medskip

\item Classically, ultrapowers of isomorphic structures over a fixed ultrafilter are isomorphic.  Effectively, cohesive powers of \emph{computably isomorphic} computable structures over a fixed cohesive set are isomorphic (Theorem~\ref{thm-IsoCohPow}).  However, it is possible for isomorphic (but not computably isomorphic) computable structures to have non-elementarily equivalent (hence non-isomorphic) cohesive powers over a fixed cohesive set.  Together, Theorems~\ref{thm-StdCohPow} and~\ref{thm-NotNZQ} imply that for every cohesive set $C$, there are computable copies $\mc{L}_0$ and $\mc{L}_1$ of $\omega$ such that the cohesive powers $\prod_C \mc{L}_0$ and $\prod_C \mc{L}_1$ are not elementarily equivalent.  This sort of phenomenon can also be witnessed by computable structures whose cohesive powers are completely described.  Fix a co-c.e.\ cohesive set.  Example~\ref{ex-NonElemEquiv} shows that for every $k \geq 1$, there is a computable copy $\mc{L}$ of $\omega$ with $\prod_C \mc{L} \;\iso\; \omega + \bm{k}\eta$.  The order-types $\omega + \bm{k}\eta$ are pairwise non-elementarily equivalent for $k \geq 1$.  Theorem~\ref{thm-BCSigma2Shuffle} shows that many more order-types are achievable as cohesive powers of computable copies of $\omega$.

\medskip

\item Classically, the Keisler--Shelah theorem states that two structures are elementarily equivalent if and only if there is an ultrafilter (on a set of appropriate size) over which the corresponding ultrapowers are isomorphic.  Effectively, an analogous result holds for decidable structures:  decidable structures $\mc{A}$ and $\mc{B}$ are elementarily equivalent if and only if $\prod_C \mc{A} \;\iso\; \prod_C \mc{B}$ for every cohesive set $C$ (Theorem~\ref{thm-EffectiveKS}, which is essentially due to Nelson~\cite{Nelson} in a slightly different context).  If $\mc{A}$ and $\mc{B}$ are computable structures that are not necessarily decidable, then the effective version of the Keisler--Shelah theorem can fail in either direction.  As explained in the previous bullet, there are many examples of elementarily equivalent computable linear orders having non-isomorphic cohesive powers.  Example~\ref{ex-NonElemEquiv} also shows that it is possible for non-elementarily equivalent computable linear orders to have isomorphic cohesive powers.

\medskip

\item Classically, for a countable language, ultrapowers over countably incomplete ultrafilters (i.e., ultrafilters that are not closed under countable intersections) are always $\aleph_1$\nobreakdash-saturated.  Effectively, cohesive powers of decidable structures are recursively saturated (Theorem~\ref{thm-SatGen} item~\ref{it-DecRecSatStr}, which is essentially due to Nelson~\cite{Nelson} in a slightly different context).  Furthermore, for $n > 0$, cohesive powers of $n$\nobreakdash-decidable structures are $\Sigma_n$\nobreakdash-recursively saturated (Theorem~\ref{thm-SatGen} item~\ref{it-nDecRecSatStr}).  Most interestingly, if the cohesive set is assumed to be co-c.e., then we obtain the $n=0$ case as well as an additional level of saturation:  cohesive powers of $n$\nobreakdash-decidable structures over co-c.e.\ cohesive sets are $\Sigma_{n+1}$\nobreakdash-recursively saturated (Theorem~\ref{thm-SatCoCe}).
\end{itemize}

This work is organized as follows.  Section~\ref{sec-ProdAndPow} presents the basic theory of cohesive products and cohesive powers, focusing on analogs of {\L}o\'{s}'s theorem, substructures, saturation, and isomorphisms.  Section~\ref{sec-LOandPow} concerns the cohesive powers of computable linear orders in general.  Section~\ref{sec-PowOfOmega} concerns the cohesive powers of computable copies of $\omega$.  In Section~\ref{sec-DenseNonstd}, given a co-c.e.\ cohesive set $C$, we construct a computable copy $\mc{L}$ of $\omega$ whose cohesive power $\prod_C \mc{L}$ has order-type $\omega + \eta$.  Section~\ref{sec-Shuffle} leverages the construction of Section~\ref{sec-DenseNonstd} to shuffle various patterns of finite order-types into cohesive powers of computable copies of $\omega$.

\section{Cohesive products and powers of computable structures}\label{sec-ProdAndPow}

We assume familiarity with the basic concepts and notation from computability theory and computable structure theory.  Comprehensive references include~\cites{LermanBook, SoareBookRE, SoareBookTC} for computability theory and~\cites{AshKnightBook, MontalbanBook} for computable structure theory.  See also~\cite{FokinaHarizanovMelnikov} for a survey of computable structure theory.

Throughout, $\Nb$ denotes the natural numbers, and $\omega$ denotes its order-type when thought of as a linear order.  For each $n \geq 2$, we use $\la x_0, \dots, x_{n-1} \ra \colon \Nb^n \imp \Nb$ to denote the usual computable bijective $n$\nobreakdash-tupling function, which we may assume is increasing in all coordinates.  For each $i < n$, $\pi_i$ denotes the corresponding projection function onto coordinate $i$.  For $X \subseteq \Nb$ and $n \in \Nb$, $X \rst n$ denotes the set $X \cap \{0, 1, \dots, n-1\}$.  Often we consider expressions of the form $\lim_{n \in C} f(n)$, $\limsup_{n \in C} f(n)$, $\liminf_{n \in C} f(n)$, etc., where $f \colon \Nb \imp \Nb$ is some function and $C \subseteq \Nb$ is an infinite set.  For this, let $n_0 < n_1 < n_2 < \cdots$ be the elements of $C$ listed in increasing order.  Then $\lim_{n \in C} f(n)$ means $\lim_{i \imp \infty} f(n_i)$, and $\limsup_{n \in C} f(n)$ and $\liminf_{n \in C} f(n)$ are interpreted similarly.  Notice that for functions $f \colon \Nb \imp \Nb$, $\lim_{n \in C} f(n) = \infty$ if and only if $\liminf_{n \in C} f(n) = \infty$.

We denote partial computable functions by $\varphi$, $\psi$, etc.  For a partial computable function $\varphi$, $\varphi(n)\da$ means that $\varphi$ halts on input $n$, thus producing an output, and $\varphi(n)\ua$ means that $\varphi$ does not halt on input $n$.  The notation $\varphi \keq \psi$ means that $\varphi$ and $\psi$ are equal partial functions:  for every $n$, either $\varphi(n)\da = \psi(n)\da$ or both $\varphi(n)\ua$ and $\psi(n)\ua$.  We also use the $\keq$ notation to define one partial computable function in terms of another.  For example, `let $\varphi(n) \keq \psi(n) + 1$' means compute $\varphi(n)$ by running $\psi(n)$ and adding $1$ to the output if $\psi(n)$ halts.  As usual, $(\varphi_e)_{e \in \Nb}$ denotes the standard effective enumeration of all partial computable functions, and $\varphi_{e,s}(n)$ denotes the result (if any) of running $\varphi_e$ on input $n$ for $s$ computational steps.  Sometimes we also write $\varphi_0, \dots, \varphi_{n-1}$ to refer to an arbitrary list of partial computable functions.  The usage of subscripts will be clear from context.

Throughout, we consider only first-order languages $\mf{L}$ and finite first-order $\mf{L}$\nobreakdash-formulas.  For $k \in \Nb$, we sometimes use the abbreviation $\exists^{\geq k} x\, \Phi(x)$ to express that there are at least $k$ distinct $x$ for which $\Phi(x)$ holds:
\begin{align*}
\exists^{\geq k} x\, \Phi(x) \quad\equiv\quad \exists x_0, \dots, x_{k-1} \, \left[\left(\bigwedge_{i < j < k}x_i \neq x_j\right) \;\andd\; \left(\bigwedge_{i < k} \Phi(x_i)\right)\right].
\end{align*}
Similarly, we use the abbreviation $\exists^{= k} x\, \Phi(x)$ to express that there are exactly $k$ distinct $x$ for which $\Phi(x)$ holds:  $\exists^{= k} x\, \Phi(x) \;\equiv\; \exists^{\geq k} x\, \Phi(x) \;\andd\; \neg\exists^{\geq k+1} x\, \Phi(x)$.  We point out that, for example, if $\Phi(x)$ is a $\Sigma_1$ formula, then $\exists^{\geq k} x\, \Phi(x)$ is equivalent to a $\Sigma_1$ formula and $\exists^{= k} x\, \Phi(x)$ is equivalent to the conjunction of a $\Sigma_1$ formula and a $\Pi_1$ formula.  In a slight abuse of the terminology, we say that a formula is $\Delta_n$ if it is logically equivalent to both a $\Sigma_n$ formula and a $\Pi_n$ formula.  So if $\Phi(x)$ is $\Sigma_1$, then then $\exists^{= k} x\, \Phi(x)$ is $\Delta_2$.
 
Fix a computable language $\mf{L}$.  A computable $\mf{L}$\nobreakdash-structure $\mc{A}$ consists of a non-empty computable domain $A \subseteq \Nb$ and a uniformly computable interpretation of all relation, function, and constant symbols of $\mf{L}$.  We often denote the domain of a structure $\mc{A}$ by $|\mc{A}|$.  A computable $\mf{L}$\nobreakdash-structure $\mc{A}$ is \emph{decidable} if there is an algorithm which, given a formula $\Phi(x_0, \dots, x_{m-1})$, with all free variables displayed, and a sequence of parameters $\la a_0, \dots, a_{m-1} \ra$ each from $|\mc{A}|$, determines whether or not $\mc{A} \models \Phi(a_0, \dots, a_{m-1})$.  Likewise, a computable $\mf{L}$\nobreakdash-structure $\mc{A}$ is \emph{$n$\nobreakdash-decidable} if there is such an algorithm determining whether or not $\Sigma_n$ formulas with parameters from $|\mc{A}|$ hold in $\mc{A}$.  In other words, a computable structure is a structure having a computable atomic diagram (or, equivalently, a computable quantifier-free diagram); a decidable structure is a structure having a computable elementary diagram; and an $n$\nobreakdash-decidable structure is a structure having a computable $\Sigma_n$\nobreakdash-elementary diagram.  A $0$\nobreakdash-decidable structure is the same thing as a computable structure.  Similarly, a sequence $(\mc{A}_i : i \in \Nb)$ of $\mf{L}$\nobreakdash-structures is \emph{uniformly computable}, \emph{uniformly decidable}, or \emph{uniformly $n$\nobreakdash-decidable} if the respective sequence of atomic, elementary, or $\Sigma_n$\nobreakdash-elementary diagrams is uniformly computable.

We find it convenient to extend the decidability terminology to individual formulas and to computable sequences of formulas that are not necessarily all members of some fixed syntactic class.  Say that a computable sequence of formulas $(\Phi_i : i \in \Nb)$ is \emph{uniformly decidable} in a computable $\mf{L}$\nobreakdash-structure $\mc{A}$ if there is an algorithm that, given a subformula $\Psi(\vec{y})$ of $\Phi_i$ for some $i$ (including of course the possibility $\Psi = \Phi_i$) and an appropriate sequence of parameters $\vec{a}$ from $|\mc{A}|$, determines whether or not $\mc{A} \models \Psi(\vec{a})$.  The reason for including the decidability of subformulas is to permit inductive arguments and to ensure that we can effectively search for witnesses to existential quantifiers.  These properties are used in the subsections on analogs of {\L}o\'{s}'s theorem and on saturation below, for example.  Formally, $(\Phi_i : i \in \Nb)$ is uniformly decidable in $\mc{A}$ if the set
\begin{align*}
\{\la i, \Psi(\vec{y}), \vec{a} \ra : \text{$\Psi$ is a subformula of $\Phi_i$} \;\andd\; |\vec{a}| = |\vec{y}| \;\andd\; \mc{A} \models \Psi(\vec{a})\}
\end{align*}
is computable.  In the case of sequences of structures, say that a computable sequence of formulas $(\Phi_i : i \in \Nb)$ is \emph{uniformly decidable} in a computable sequence $(\mc{A}_n : n \in \Nb)$ of $\mf{L}$\nobreakdash-structures if the set
\begin{align*}
\{\la n, i, \Psi(\vec{y}), \vec{a} \ra : \text{$\Psi$ is a subformula of $\Phi_i$} \;\andd\; |\vec{a}| = |\vec{y}| \;\andd\; \mc{A}_n \models \Psi(\vec{a})\}
\end{align*}
is computable.  In the case of a single formula $\Phi$ (and its subformulas), we simply say that $\Phi$ is \emph{decidable} in a computable $\mf{L}$\nobreakdash-structure $\mc{A}$ and that $\Phi$ is \emph{uniformly decidable} in a uniformly computable sequence $(\mc{A}_n : n \in \Nb)$ of $\mf{L}$\nobreakdash-structures.  In a decidable structure, every computable sequence of formulas is uniformly decidable.  In an $n$\nobreakdash-decidable structure, every computable sequence of $\Sigma_n$ formulas (indeed, every computable sequence of Boolean combinations of $\Sigma_n$ formulas) is uniformly decidable.

\subsection*{Cohesive products and cohesive powers}

\begin{Definition}
An infinite set $C \subseteq \Nb$ is \emph{cohesive} if for every c.e.\ set $W$, either $C \subseteq^* W$ or $C \subseteq^* \ol{W}$.
\end{Definition}

More generally, a straightforward induction shows that if $C$ is a cohesive set and $X$ is a Boolean combination of c.e.\ sets, then either $C \subseteq^* X$ or $C \subseteq^* \ol{X}$.  Notice that if $C$ is cohesive and $X$ is a Boolean combination of c.e.\ sets, then $C \cap X$ being infinite implies that $C \subseteq^* X$.  We use quantifiers $\forae n$ and $\existsinf n$ as abbreviations for  `for almost every $n$' and `there are infinitely many $n$.'  So for example, $(\forae n \in C)(n \in X)$ means $C \subseteq^* X$.

\begin{Definition}\label{def-CohProd}
Let $\mf{L}$ be a computable language.  Let $(\mc{A}_n : n \in \Nb)$ be a uniformly computable sequence of $\mf{L}$\nobreakdash-structures with corresponding uniformly computable sequence of non-empty domains $(|\mc{A}_n| : n \in \Nb)$.  Let $C \subseteq \Nb$ be cohesive.  The \emph{cohesive product of $(\mc{A}_n : n \in \Nb)$ over $C$} is the $\mf{L}$\nobreakdash-structure $\prod_C \mc{A}_n$ defined as follows.

\begin{itemize}
\item Let $D$ be the set of partial computable functions $\varphi$ such that $\forall n \, (\varphi(n)\da \,\imp\, \varphi(n) \in |\mc{A}_n|)$ and $C \subseteq^* \dom(\varphi)$.

\medskip

\item For $\varphi, \psi \in D$, let $\varphi =_C \psi$ denote $C \subseteq^* \{n : \varphi(n)\da = \psi(n)\da\}$.  The relation $=_C$ is an equivalence relation on $D$.  Let $[\varphi]$ denote the equivalence class of $\varphi \in D$ with respect to $=_C$.

\medskip

\item The domain of $\prod_C \mc{A}_n$ is the set $|\prod_C \mc{A}_n| = \{[\varphi] : \varphi \in D\}$.

\medskip

\item 
Let $R$ be an $m$\nobreakdash-ary relation symbol of $\mf{L}$.  For $[\varphi_0], \dots, [\varphi_{m-1}] \in |\prod_C \mc{A}_n|$, define $R^{\prod_C \mc{A}_n}([\varphi_0], \dots, [\varphi_{m-1}])$ by
\begin{align*}
R^{\prod_C \mc{A}_n}([\varphi_0], \dots, [\varphi_{m-1}]) \;\Biimp\; C \subseteq^* \bigl\{n : R^{\mc{A}_n}(\varphi_0(n), \dots, \varphi_{m-1}(n))\bigr\}.
\end{align*}
Here we think of $R^{\mc{A}_n}(\varphi_0(n), \dots, \varphi_{m-1}(n))$ as including the condition that $\varphi_i(n)\da$ for each $i < m$.

\medskip

\item Let $f$ be an $m$\nobreakdash-ary function symbol of $\mf{L}$.  For $[\varphi_0], \dots, [\varphi_{m-1}] \in |\prod_C \mc{A}_n|$, let $\psi$ be the partial computable function defined by
\begin{align*}
\psi(n) \keq f^{\mc{A}_n}(\varphi_0(n), \dots, \varphi_{m-1}(n)),
\end{align*}
and notice that $C \subseteq^* \dom(\psi)$ because $C \subseteq^* \dom(\varphi_i)$ for each $i < m$.  Define $f^{\prod_C \mc{A}_n}$ by
\begin{align*}
f^{\prod_C \mc{A}_n}([\varphi_0], \dots, [\varphi_{m-1}]) = [\psi].
\end{align*}

\medskip

\item Let $c$ be a constant symbol of $\mf{L}$.  Let $\psi$ be the total computable function defined by $\psi(n) = c^{\mc{A}_n}$, and define $c^{\prod_C \mc{A}_n} = [\psi]$.
\end{itemize}

In the case where $\mc{A}_n$ is the same fixed computable structure $\mc{A}$ for every $n$, the cohesive product $\prod_C \mc{A}_n$ is called the \emph{cohesive power of $\mc{A}$ over $C$} and is denoted $\prod_C \mc{A}$.
\end{Definition}

In Definition~\ref{def-CohProd}, it is equivalent to relax the condition $\forall n \, (\varphi(n)\da \,\imp\, \varphi(n) \in |\mc{A}_n|)$ and $C \subseteq^* \dom(\varphi)$ of the first bullet to $(\forae n \in C)(\varphi(n)\da \,\andd\, \varphi(n) \in |\mc{A}_n|)$.  If $\varphi$ is a partial computable function satisfying the relaxed condition, let $\psi$ be the partial computable function given by
\begin{align*}
\psi(n) = 
\begin{cases}
\varphi(n) & \text{if $\varphi(n)\da$ and $\varphi(n) \in |\mc{A}_n|$}\\
\ua & \text{otherwise}.
\end{cases}
\end{align*}
Then $\psi$ satisfies the original condition, and $\psi =_C \varphi$.

We often consider cohesive powers of computable structures by co-c.e.\ cohesive sets.  The co-c.e.\ cohesive sets are exactly the complements of the \emph{maximal} sets, which are the co-atoms of the lattice of c.e.\ sets modulo finite difference (see~\cite{SoareBookRE}*{Section~X.3}).  Such sets exist by a well-known theorem of Friedberg (see~\cite{SoareBookRE}*{Theorem~X.3.3}).  Cohesive powers are intended to be effective analogs of ultrapowers, so in light of this analogy, it makes sense to impose effectivity on the cohesive set, which plays the role of the ultrafilter, as well as on the base structure itself.  Technically, it helps to be able to learn what numbers are not in the cohesive set $C$ when building a computable structure $\mc{A}$ so as to influence $\prod_C \mc{A}$ in a particular way.  Cohesive products by co-c.e.\ cohesive sets also have the helpful property that every member of the cohesive product has a total computable representative.  Let $(\mc{A}_n : n \in \Nb)$ be a uniformly computable sequence of structures with non-empty domain $|\mc{A}_n|$ for each $n$, and let $a_n$ be the first element of $|\mc{A}_n|$ for each $n$.  Suppose that $C$ is co-c.e.\ and cohesive, and let $\varphi \colon \Nb \imp \Nb$ be a partial computable function representing an element $[\varphi]$ of $\prod_C \mc{A}_n$.  Then $C \subseteq^* \dom(\varphi)$, so let $N$ be such that $(\forall n > N)(n \in C \imp \varphi(n)\da)$.  Define a total computable $f \colon \Nb \imp \Nb$ as follows.  If $n \leq N$, then output $f(n) = a_n$.  If $n > N$, then simultaneously run $\varphi(n)$ and enumerate the complement $\ol{C}$ of $C$.  Either $\varphi(n)\da$, $n \in \ol{C}$, or both.  If $\varphi(n)$ halts before $n$ is enumerated into $\ol{C}$, then output $f(n) = \varphi(n)$; and if $n$ is enumerated into $\ol{C}$ before $\varphi(n)$ halts, then output $f(n) = a_n$.  This $f$ is total and satisfies $f =_C \varphi$.

As with structures and their ultrapowers, a computable structure $\mc{A}$ always naturally embeds into its cohesive powers.  For $a \in |\mc{A}|$, let $f_a$ be the total computable function with constant value $a$.  Then for any cohesive set $C$, the map $a \mapsto [f_a]$ embeds $\mc{A}$ into $\prod_C \mc{A}$.  This map is called the \emph{canonical embedding} of $\mc{A}$ into $\prod_C \mc{A}$.  If $\mc{A}$ is finite and $C$ is cohesive, then every partial computable function $\varphi \colon \Nb \imp |\mc{A}|$ with $C \subseteq^* \dom(\varphi)$ is eventually constant on $C$.  In this case, every element of $\prod_C \mc{A}$ is in the range of the canonical embedding, and therefore $\mc{A} \iso \prod_C \mc{A}$.  If $\mc{A}$ is an infinite computable structure, then every cohesive power $\prod_C \mc{A}$ is countably infinite:  infinite because $\mc{A}$ embeds into $\prod_C \mc{A}$, and countable because the elements of $\prod_C \mc{A}$ are represented by partial computable functions.  See~\cite{DimitrovCohPow} for further details.

\subsection*{Analogs of {\L}o\'{s}'s theorem}

A restricted form of {\L}o\'{s}'s theorem holds for cohesive powers.  Let $\mf{L}$ be a computable language, let $\mc{A}$ be an $n$\nobreakdash-decidable $\mf{L}$\nobreakdash-structure, and let $C$ be cohesive.  We show that for every $\Sigma_{n+3}$ sentence $\Phi$, $\mc{A} \models \Phi$ implies that $\prod_C \mc{A} \models \Phi$.  It follows that $\mc{A}$ and $\prod_C \mc{A}$ agree on all $\Delta_{n+3}$ sentences.  It also follows that if $\mc{A}$ is decidable, then $\mc{A}$ and $\prod_C \mc{A}$ are elementarily equivalent.  Thus for decidable structures, we recover the full {\L}o\'{s} theorem (for first-order logic).  These results update Dimitrov's \emph{fundamental theorem of cohesive powers}~\cite{DimitrovCohPow}, which is the $0$\nobreakdash-decidable case:  if $\mc{A}$ is a computable $\mf{L}$\nobreakdash-structure, $C$ is cohesive, and $\Phi$ is a $\Sigma_3$ sentence, then $\mc{A} \models \Phi$ implies that $\prod_C \mc{A} \models \Phi$.  In general, the fundamental theorem of cohesive powers is the best possible analog of {\L}o\'{s}'s theorem.  In Sections~\ref{sec-PowOfOmega},~\ref{sec-DenseNonstd},~and~\ref{sec-Shuffle}, we see several examples of computable linear orders $\mc{L}$ where the $\Pi_3$ sentence ``every element has an immediate successor'' is true of $\mc{L}$ but false of some cohesive power of $\mc{L}$.  For sequences of structures, we show that if $(\mc{A}_i : i \in \Nb)$ is a sequence of uniformly $n$\nobreakdash-decidable $\mf{L}$\nobreakdash-structures, $C$ is cohesive, and $\Phi$ is a $\Pi_{n+2}$ sentence, then $C \subseteq^* \bigl\{i : \mc{A}_i \models \Phi \bigr\}$ implies that $\prod_C \mc{A}_i \models \Phi$.  This updates the \emph{fundamental theorem of cohesive products} from~\cite{DimitrovHarizanovSurvey}, which is the $0$\nobreakdash-decidable case.  In Section~\ref{sec-PowOfOmega}, we show that the fundamental theorem of cohesive products is best possible by uniformly computing a sequence of finite linear orders $(\mc{L}_i : i \in \Nb)$ such that for every cohesive set $C$, the cohesive product $\prod_C \mc{L}_i$ is a linear order with no maximum element.

\begin{Lemma}\label{lem-LosTerm}
Let $\mf{L}$ be a computable language, let $(\mc{A}_n : n \in \Nb)$ be a uniformly computable sequence of $\mf{L}$\nobreakdash-structures, and let $C$ be cohesive.  Let $t(v_0, \dots, v_{m-1})$ be a term, with all variables displayed.  Let $[\varphi_0], \dots, [\varphi_{m-1}] \in |\prod_C \mc{A}_n|$.  Let $\psi$ be the partial computable function given by
\begin{align*}
\psi(n) \keq t^{\mc{A}_n}(\varphi_0(n), \dots, \varphi_{m-1}(n)).
\end{align*}
Then 
\begin{align*}
t^{\prod_C \mc{A}_n}([\varphi_0], \dots, [\varphi_{m-1}]) = [\psi].
\end{align*}
\end{Lemma}

\begin{proof}
The proof is a straightforward induction on the construction of the term $t$, using Definition~\ref{def-CohProd}.
\end{proof}

\begin{Lemma}\label{lem-LosDec}
Let $\mf{L}$ be a computable language, let $(\mc{A}_n : n \in \Nb)$ be a uniformly computable sequence of $\mf{L}$\nobreakdash-structures, and let $C$ be cohesive.  Let $\Phi(v_0, \dots, v_{m-1})$ be a formula (with all free variables displayed) that is uniformly decidable in $(\mc{A}_n : n \in \Nb)$.  Then for any $[\varphi_0], \dots, [\varphi_{m-1}] \in |\prod_C \mc{A}_n|$,
\begin{align*}
\prod\nolimits_C \mc{A}_n \models \Phi([\varphi_0], \dots, [\varphi_{m-1}]) \quad\Biimp\quad C \subseteq^* \bigl\{n : \mc{A}_n \models \Phi(\varphi_0(n), \dots, \varphi_{m-1}(n))\bigr\}.
\end{align*}

\begin{proof}
Proceed by induction on the construction of the formula $\Phi$.  For the base case, assume that $\Phi(v_0, \dots, v_{m-1})$ is the atomic formula $R\bigl(t_0(v_0, \dots, v_{m-1}), \dots, t_{\ell-1}(v_0, \dots, v_{m-1})\bigr)$, where $R$ is either a relation symbol from $\mf{L}$ or equality, and $t_0, \dots, t_{\ell-1}$ are terms whose variables are among $v_0, \dots, v_{m-1}$.  For each $i < \ell$, define
\begin{align*}
\psi_i(n) \keq t_i^{\mc{A}_n}(\varphi_0(n), \dots, \varphi_{m-1}(n))
\end{align*}
as in Lemma~\ref{lem-LosTerm} so that $t_i^{\prod_C \mc{A}_n}([\varphi_0], \dots, [\varphi_{m-1}]) = [\psi_i]$.  Then
\begin{align*}
&& \prod\nolimits_C \mc{A}_n &\models R\bigl(t_0([\varphi_0], \dots, [\varphi_{m-1}]), \dots, t_{\ell-1}([\varphi_0], \dots, [\varphi_{m-1}])\bigr)\\
&\Biimp& \prod\nolimits_C \mc{A}_n &\models R([\psi_0], \dots, [\psi_{\ell-1}])\\
&\Biimp& C &\subseteq^* \bigl\{n : \mc{A}_n \models R(\psi_0(n), \dots, \psi_{\ell-1}(n))\bigr\}\\
&\Biimp& C &\subseteq^* \bigl\{n : \mc{A}_n \models R\bigl(t_0(\varphi_0(n), \dots, \varphi_{m-1}(n)), \dots, t_{\ell-1}(\varphi_0(n), \dots, \varphi_{m-1}(n))\bigr)\bigr\}.
\end{align*}

We consider the inductive cases for the connectives $\andd$ and $\neg$ and for the quantifier $\exists$.

Assume that $\Phi(v_0, \dots, v_{m-1})$ is the formula $\Phi_0(v_0, \dots, v_{m-1}) \andd \Phi_1(v_0, \dots, v_{m-1})$.  Then $\prod_C \mc{A}_n \models \Phi([\varphi_0], \dots, [\varphi_{m-1}])$ if and only if $(\forall i < 2) \bigl(\prod_C \mc{A}_n \models \Phi_i([\varphi_0], \dots, [\varphi_{m-1}])\bigr)$.  Formulas $\Phi_0$ and $\Phi_1$ are uniformly decidable in $(\mc{A}_n : n \in \Nb)$ because they are subformulas of $\Phi$.  Thus the induction hypothesis yields that
\begin{align*}
\prod\nolimits_C \mc{A}_n \models \Phi_i([\varphi_0], \dots, [\varphi_{m-1}]) \quad\Biimp\quad C \subseteq^* \bigl\{n : \mc{A}_n \models \Phi_i(\varphi_0(n), \dots, \varphi_{m-1}(n))\bigr\}
\end{align*}
for each $i < 2$.  Finally, $C \subseteq^* \bigl\{n : \mc{A}_n \models \Phi_i(\varphi_0(n), \dots, \varphi_{m-1}(n))\bigr\}$ holds for both $i=0$ and $i=1$ if and only if
\begin{align*}
C \subseteq^* \bigl\{n : \mc{A}_n \models \Phi_0(\varphi_0(n), \dots, \varphi_{m-1}(n)) \andd \Phi_1(\varphi_0(n), \dots, \varphi_{m-1}(n))\bigr\}.
\end{align*}
Putting this all together yields that
\begin{align*}
\prod\nolimits_C \mc{A}_n \models \Phi_0([\varphi_0], \dots, [\varphi_{m-1}]) \andd \Phi_1([\varphi_0], \dots, [\varphi_{m-1}])
\end{align*}
if and only if
\begin{align*}
C \subseteq^* \bigl\{n : \mc{A}_n \models \Phi_0(\varphi_0(n), \dots, \varphi_{m-1}(n)) \andd \Phi_1(\varphi_0(n), \dots, \varphi_{m-1}(n))\bigr\}.
\end{align*}

Now assume that $\Phi(v_0, \dots, v_{m-1})$ is the formula $\neg \Psi(v_0, \dots, v_{m-1})$.  Then $\prod_C \mc{A}_n \models \Phi([\varphi_0], \dots, [\varphi_{m-1}])$ if and only if $\prod_C \mc{A}_n \nmodels \Psi([\varphi_0], \dots, [\varphi_{m-1}])$.  The formula $\Psi$ is uniformly decidable in $(\mc{A}_n : n \in \Nb)$ because it is a subformula of $\Phi$.  Thus the induction hypothesis yields that
\begin{align*}
\prod\nolimits_C \mc{A}_n \nmodels \Psi([\varphi_0], \dots, [\varphi_{m-1}]) \quad\Biimp\quad C \nsubseteq^* \bigl\{n : \mc{A}_n \models \Psi(\varphi_0(n), \dots, \varphi_{m-1}(n))\bigr\}.
\end{align*}
We have that $C \nsubseteq^* \bigl\{n : \mc{A}_n \models \Psi(\varphi_0(n), \dots, \varphi_{m-1}(n))\bigr\}$ if and only if $C$ has infinite intersection with the set $\bigl\{n : \mc{A}_n \nmodels \Psi(\varphi_0(n), \dots, \varphi_{m-1}(n))\bigr\}$, which is co-c.e.\ because $\Psi(v_0, \dots, v_{m-1})$ is uniformly decidable in $(\mc{A}_n : n \in \Nb)$.  By cohesiveness and the fact that $C \subseteq^* \dom(\varphi_i)$ for each $i < m$, we therefore have that
\begin{align*}
C \nsubseteq^* \bigl\{n : \mc{A}_n \models \Psi(\varphi_0(n), \dots, \varphi_{m-1}(n))\bigr\} \quad&\Biimp\quad C \subseteq^* \bigl\{n : \mc{A}_n \nmodels \Psi(\varphi_0(n), \dots, \varphi_{m-1}(n))\bigr\}\\
\quad&\Biimp\quad C \subseteq^* \bigl\{n : \mc{A}_n \models \neg\Psi(\varphi_0(n), \dots, \varphi_{m-1}(n))\bigr\}.
\end{align*}
Putting this all together yields that
\begin{align*}
\prod\nolimits_C \mc{A}_n \models \neg\Psi([\varphi_0], \dots, [\varphi_{m-1}]) \quad\Biimp\quad C \subseteq^* \bigl\{n : \mc{A}_n \models \neg\Psi(\varphi_0(n), \dots, \varphi_{m-1}(n))\bigr\}.
\end{align*}

Finally, assume that $\Phi(v_0, \dots, v_{m-1})$ is the formula $\exists x\, \Psi(x, v_0, \dots, v_{m-1})$.  First suppose that $\prod_C \mc{A}_n \models \exists x\, \Psi(x, [\varphi_0], \dots, [\varphi_{m-1}])$.  Then there is a $[\theta] \in |\prod_C \mc{A}_n|$ such that $\prod_C \mc{A}_n \models \Psi([\theta], [\varphi_0], \dots, [\varphi_{m-1}])$.  The formula $\Psi$ is uniformly decidable in $(\mc{A}_n : n \in \Nb)$ because it is a subformula of $\Phi$.  Thus the induction hypothesis yields that
\begin{align*}
\prod\nolimits_C \mc{A}_n \models \Psi([\theta], [\varphi_0], \dots, [\varphi_{m-1}]) \quad\Biimp\quad C \subseteq^* \bigl\{n : \mc{A}_n \models \Psi(\theta(n), \varphi_0(n), \dots, \varphi_{m-1}(n))\bigr\}.
\end{align*}
Clearly
\begin{align*}
\bigl\{n : \mc{A}_n \models \Psi(\theta(n), \varphi_0(n), \dots, \varphi_{m-1}(n))\bigr\} \subseteq \bigl\{n : \mc{A}_n \models \exists x\, \Psi(x, \varphi_0(n), \dots, \varphi_{m-1}(n))\bigr\},
\end{align*}
so we have that $C \subseteq^* \bigl\{n : \mc{A}_n \models \exists x\, \Psi(x, \varphi_0(n), \dots, \varphi_{m-1}(n))\bigr\}$.

Conversely, suppose that $C \subseteq^* \bigl\{n : \mc{A}_n \models \exists x\, \Psi(x, \varphi_0(n), \dots, \varphi_{m-1}(n))\bigr\}$.  The formula $\Psi(x, v_0, \dots, v_{m-1})$ is uniformly decidable in $(\mc{A}_n : n \in \Nb)$, so we may define a partial computable function $\theta$ by
\begin{align*}
\theta(n) \keq \text{the first $a \in |\mc{A}_n|$ such that $\mc{A}_n \models \Psi(a, \varphi_0(n), \dots, \varphi_{m-1}(n))$}.
\end{align*}
The assumption $C \subseteq^* \bigl\{n : \mc{A}_n \models \exists x\, \Psi(x, \varphi_0(n), \dots, \varphi_{m-1}(n))\bigr\}$ implies that $C \subseteq^* \dom(\theta)$.  We therefore have that $[\theta] \in |\prod_C \mc{A}_n|$ and that
\begin{align*}
C \subseteq^* \bigl\{n : \mc{A}_n \models \Psi(\theta(n), \varphi_0(n), \dots, \varphi_{m-1}(n))\bigr\}.
\end{align*}
The induction hypothesis then yields that $\prod_C \mc{A}_n \models \Psi([\theta], [\varphi_0], \dots, [\varphi_{m-1}])$.  Therefore $\prod_C \mc{A}_n \models \exists x\, \Psi(x, [\varphi_0], \dots, [\varphi_{m-1}])$.  This completes the proof.
\end{proof}
\end{Lemma}

\begin{Lemma}\label{lem-LosProdParamHelper}
Let $\mf{L}$ be a computable language, let $(\mc{A}_n : n \in \Nb)$ be a uniformly computable sequence of $\mf{L}$\nobreakdash-structures, and let $C$ be cohesive.  Let $\Psi(x_0, \dots, x_{k-1}, y_0, \dots, y_{\ell-1}, v_0, \dots, v_{m-1})$ be a formula that is uniformly decidable in $(\mc{A}_n : n \in \Nb)$.

\begin{enumerate}[(1)]
\item\label{it-LosProdPramSig2Helper} For any $[\varphi_0], \dots, [\varphi_{m-1}] \in |\prod_C \mc{A}_n|$,
\begin{align*}
\prod\nolimits_C \mc{A}_n \models \exists \vec{x}\, \forall \vec{y}\, \Psi(\vec{x}, \vec{y}, [\varphi_0], \dots, [\varphi_{m-1}]) \quad\Imp\quad C \subseteq^* \bigl\{n : \mc{A}_n \models \exists \vec{x}\, \forall \vec{y}\, \Psi(\vec{x}, \vec{y}, \varphi_0(n), \dots, \varphi_{m-1}(n))\bigr\}.
\end{align*}

\medskip

\item\label{it-LosProdPramPi2Helper} For any $[\varphi_0], \dots, [\varphi_{m-1}] \in |\prod_C \mc{A}_n|$,
\begin{align*}
C \subseteq^* \bigl\{n : \mc{A}_n \models \forall \vec{x}\, \exists \vec{y}\, \Psi(\vec{x}, \vec{y}, \varphi_0(n), \dots, \varphi_{m-1}(n))\bigr\} \quad\Imp\quad \prod\nolimits_C \mc{A}_n \models \forall \vec{x}\, \exists \vec{y}\, \Psi(\vec{x}, \vec{y}, [\varphi_0], \dots, [\varphi_{m-1}]).
\end{align*}
\end{enumerate}
\end{Lemma}

\begin{proof}
For item~\ref{it-LosProdPramSig2Helper}, suppose that $\prod_C \mc{A}_n \models \exists \vec{x}\, \forall \vec{y}\, \Psi(\vec{x}, \vec{y}, [\varphi_0], \dots, [\varphi_{m-1}])$.  Let $[\psi_0], \dots, [\psi_{k-1}] \in |\prod_C \mc{A}_n|$ be such that
\begin{align*}
\prod\nolimits_C \mc{A}_n \models \forall \vec{y}\, \Psi([\psi_0], \dots, [\psi_{k-1}], \vec{y}, [\varphi_0], \dots, [\varphi_{m-1}]). \tag{$*$}\label{eq-ModelsForAll}
\end{align*}
The set
\begin{align*}
X = \bigl\{n : \mc{A}_n \models \forall \vec{y}\, \Psi(\psi_0(n), \dots, \psi_{k-1}(n), \vec{y}, \varphi_0(n), \dots, \varphi_{m-1}(n))\bigr\}
\end{align*}
is a Boolean combination of c.e.\ sets, so by cohesiveness, either $C \subseteq^* X$ or $C \subseteq^* \ol{X}$.  If $C \subseteq^* \ol{X}$, then because $C \subseteq^* \dom(\varphi_i)$ for each $i < m$ and $C \subseteq^* \dom(\psi_i)$ for each $i < k$, we would have that
\begin{align*}
C \subseteq^* \bigl\{n : \mc{A}_n \models \exists \vec{y}\, \neg \Psi(\psi_0(n), \dots, \psi_{k-1}(n), \vec{y}, \varphi_0(n), \dots, \varphi_{m-1}(n))\bigr\}.
\end{align*}
As $\Psi$ is uniformly decidable in $(\mc{A}_n : n \in \Nb)$, we could then argue as in the $\exists$ case of the proof of Lemma~\ref{lem-LosDec} and simultaneously define partial computable functions $\theta_i$ for $i < \ell$ as follows.  Given $n$, search for the first sequence $\la a_0, \dots, a_{\ell-1} \ra \in |\mc{A}_n|^\ell$ such that
\begin{align*}
\mc{A}_n \models \neg \Psi(\psi_0(n), \dots, \psi_{k-1}(n), a_0, \dots, a_{\ell-1}, \varphi_0(n), \dots, \varphi_{m-1}(n)),
\end{align*}
and set $\theta_i(n) = a_i$ for each $i < \ell$.  Then
\begin{align*}
C \subseteq^* \bigl\{n : \mc{A}_n \models \neg \Psi(\psi_0(n), \dots, \psi_{k-1}(n), \theta_0(n), \dots, \theta_{\ell-1}(n), \varphi_0(n), \dots, \varphi_{m-1}(n))\bigr\},
\end{align*}
so
\begin{align*}
\prod\nolimits_C \mc{A}_n \models \neg \Psi([\psi_0], \dots, [\psi_{k-1}], [\theta_0], \dots, [\theta_{\ell-1}], [\varphi_0], \dots, [\varphi_{m-1}])
\end{align*}
by Lemma~\ref{lem-LosDec}, which contradicts~\eqref{eq-ModelsForAll}.  Thus we cannot have $C \subseteq^* \ol{X}$, so it must be that $C \subseteq^* X$.  Therefore
\begin{align*}
C &\subseteq^* \bigl\{n : \mc{A}_n \models \forall \vec{y}\, \Psi(\psi_0(n), \dots, \psi_{k-1}(n), \vec{y}, \varphi_0(n), \dots, \varphi_{m-1}(n))\bigr\}\\
&\subseteq^{\phantom{*}} \bigl\{n : \mc{A}_n \models \exists \vec{x}\, \forall \vec{y}\, \Psi(\vec{x}, \vec{y}, \varphi_0(n), \dots, \varphi_{m-1}(n))\bigr\},
\end{align*}
as desired.

Item~\ref{it-LosProdPramPi2Helper} follows from item~\ref{it-LosProdPramSig2Helper}.  Suppose that $\prod_C \mc{A}_n \nmodels \forall \vec{x}\, \exists \vec{y}\, \Psi(\vec{x}, \vec{y}, [\varphi_0], \dots, [\varphi_{m-1}])$.  Then $\prod_C \mc{A}_n \models \exists \vec{x}\, \forall \vec{y}\, \neg\Psi(\vec{x}, \vec{y}, [\varphi_0], \dots, [\varphi_{m-1}])$.  The formula $\neg\Psi$ is uniformly decidable in $(\mc{A}_n : n \in \Nb)$ because $\Psi$ is, so applying item~\ref{it-LosProdPramSig2Helper} to $\neg\Psi$ yields that
\begin{align*}
C \subseteq^* \bigl\{n : \mc{A}_n \models \exists \vec{x}\, \forall \vec{y}\, \neg\Psi(\vec{x}, \vec{y}, \varphi_0(n), \dots, \varphi_{m-1}(n))\bigr\}.
\end{align*}
Therefore
\begin{align*}
C \nsubseteq^* \bigl\{n : \mc{A}_n \models \forall \vec{x}\, \exists \vec{y}\, \Psi(\vec{x}, \vec{y}, \varphi_0(n), \dots, \varphi_{m-1}(n))\bigr\}
\end{align*}
because the two sets are disjoint.
\end{proof}

\begin{Lemma}\label{lem-LosProdParamDelta2Helper}
Let $\mf{L}$ be a computable language, let $(\mc{A}_n : n \in \Nb)$ be a uniformly computable sequence of $\mf{L}$\nobreakdash-structures, and let $C$ be cohesive.  Let $\Phi(v_0, \dots, v_{m-1})$ be a formula that is logically equivalent to a formula of the form $\exists \vec{x}\, \forall \vec{y}\, \Psi_0(\vec{x}, \vec{y}, v_0, \dots, v_{m-1})$ and to a formula of the form $\forall \vec{x}\, \exists \vec{y}\, \Psi_1(\vec{x}, \vec{y}, v_0, \dots, v_{m-1})$, where $\Psi_0$ and $\Psi_1$ are uniformly decidable in $(\mc{A}_n : n \in \Nb)$.  Then for any $[\varphi_0], \dots, [\varphi_{m-1}] \in |\prod_C \mc{A}_n|$,
\begin{align*}
\prod\nolimits_C \mc{A}_n \models \Phi([\varphi_0], \dots, [\varphi_{m-1}]) \quad\Biimp\quad C \subseteq^* \bigl\{n : \mc{A}_n \models \Phi(\varphi_0(n), \dots, \varphi_{m-1}(n))\bigr\}.
\end{align*}
\end{Lemma}

\begin{proof}
The `$\Imp$' implication is by Lemma~\ref{lem-LosProdParamHelper} item~\ref{it-LosProdPramSig2Helper} and the equivalence of $\Phi(\vec{v})$ with $\exists \vec{x}\, \forall \vec{y}\, \Psi_0(\vec{x}, \vec{y}, \vec{v})$.  The `$\Leftarrow$' implication is by Lemma~\ref{lem-LosProdParamHelper} item~\ref{it-LosProdPramPi2Helper} and the equivalence of $\Phi(\vec{v})$ with $\forall \vec{x}\, \exists \vec{y}\, \Psi_1(\vec{x}, \vec{y}, \vec{v})$.
\end{proof}

The following theorem refines the \emph{fundamental theorem of cohesive products} from~\cite{DimitrovHarizanovSurvey}.

\begin{Theorem}\label{thm-LosProdParam}
Let $\mf{L}$ be a computable language, let $(\mc{A}_i : i \in \Nb)$ be a sequence of uniformly $n$\nobreakdash-decidable $\mf{L}$\nobreakdash-structures, and let $C$ be cohesive.
\begin{enumerate}[(1)]
\item\label{it-LosProdPramSig2} Let $\Phi(v_0, \dots, v_{m-1})$ be a $\Sigma_{n+2}$ formula.  Then for any $[\varphi_0], \dots, [\varphi_{m-1}] \in |\prod_C \mc{A}_i|$,
\begin{align*}
\prod\nolimits_C \mc{A}_i \models \Phi([\varphi_0], \dots, [\varphi_{m-1}]) \quad\Imp\quad C \subseteq^* \bigl\{i : \mc{A}_i \models \Phi(\varphi_0(i), \dots, \varphi_{m-1}(i))\bigr\}.
\end{align*}

\medskip

\item\label{it-LosProdPramPi2} Let $\Phi(v_0, \dots, v_{m-1})$ be a $\Pi_{n+2}$ formula.  Then for any $[\varphi_0], \dots, [\varphi_{m-1}] \in |\prod_C \mc{A}_i|$,
\begin{align*}
C \subseteq^* \bigl\{i : \mc{A}_i \models \Phi(\varphi_0(i), \dots, \varphi_{m-1}(i))\bigr\} \quad\Imp\quad \prod\nolimits_C \mc{A}_i \models \Phi([\varphi_0], \dots, [\varphi_{m-1}]).
\end{align*}

\medskip

\item\label{it-LosProdPramDelta2} Let $\Phi(v_0, \dots, v_{m-1})$ be a $\Delta_{n+2}$ formula.  Then for any $[\varphi_0], \dots, [\varphi_{m-1}] \in |\prod_C \mc{A}_i|$,
\begin{align*}
\prod\nolimits_C \mc{A}_i \models \Phi([\varphi_0], \dots, [\varphi_{m-1}]) \quad\Biimp\quad C \subseteq^* \bigl\{i : \mc{A}_i \models \Phi(\varphi_0(i), \dots, \varphi_{m-1}(i))\bigr\}.
\end{align*}
\end{enumerate}
\end{Theorem}

\begin{proof}
Item~\ref{it-LosProdPramSig2} follows from Lemma~\ref{lem-LosProdParamHelper} item~\ref{it-LosProdPramSig2Helper} because a $\Sigma_{n+2}$ formula $\Phi(v_0, \dots, v_{m-1})$ has the form $\exists \vec{x}\, \forall \vec{y}\, \Psi(\vec{x}, \vec{y}, v_0, \dots, v_{m-1})$, where $\Psi(\vec{x}, \vec{y}, v_0, \dots, v_{m-1})$ is $\Sigma_n$ and hence is uniformly decidable in the uniformly $n$\nobreakdash-decidable sequence $(\mc{A}_i : i \in \Nb)$.  Likewise, item~\ref{it-LosProdPramPi2} follows from Lemma~\ref{lem-LosProdParamHelper} item~\ref{it-LosProdPramPi2Helper}, and item~\ref{it-LosProdPramDelta2} follows from Lemma~\ref{lem-LosProdParamDelta2Helper}.
\end{proof}

Proposition~\ref{prop-LosProdParamTight} below shows that Theorem~\ref{thm-LosProdParam} is tight in general.  We now switch from cohesive products to cohesive powers.

\begin{Lemma}\label{lem-LosSigma3}
Let $\mf{L}$ be a computable language, let $\mc{A}$ be a computable $\mf{L}$\nobreakdash-structure, and let $C$ be cohesive.  Let $\Phi$ be a sentence of the form $\forall \vec{x}\, \exists \vec{y}\, \forall \vec{z} \, \Psi(\vec{x}, \vec{y}, \vec{z})$, where $\Psi(\vec{x}, \vec{y}, \vec{z})$ is decidable in $\mc{A}$.  Then
\begin{align*}
\prod\nolimits_C \mc{A} \models \Phi \quad\Imp\quad \mc{A} \models \Phi.
\end{align*}
\end{Lemma}

\begin{proof}
Suppose that $\prod_C \mc{A} \models \Phi$.  Write $\vec{x} = x_0, \dots, x_{m-1}$.  For each $i < m$, fix an $a_i \in |\mc{A}|$, and let $\varphi_i$ be the constant function with value $a_i$.  Then $[\varphi_0], \dots, [\varphi_{m-1}] \in |\prod_C \mc{A}|$, so
\begin{align*}
\prod\nolimits_C \mc{A} \models \exists \vec{y}\, \forall \vec{z}\, \Psi([\varphi_0], \dots, [\varphi_{m-1}], \vec{y}, \vec{z}).
\end{align*}
Therefore
\begin{align*}
C \subseteq^* \bigl\{n : \mc{A} \models \exists \vec{y}\, \forall \vec{z}\, \Psi(\varphi_0(n), \dots, \varphi_{m-1}(n), \vec{y}, \vec{z})\bigr\} = \bigl\{n : \mc{A} \models \exists \vec{y}\, \forall \vec{z}\, \Psi(a_0, \dots, a_{m-1}, \vec{y}, \vec{z})\bigr\}
\end{align*}
by Lemma~\ref{lem-LosProdParamHelper} item~\ref{it-LosProdPramSig2Helper} applied to the formula $\exists \vec{y}\, \forall \vec{z} \, \Psi(\vec{x}, \vec{y}, \vec{z})$ and the sequence of structures $(\mc{A}_n : n \in \Nb)$ where $\mc{A}_n$ is $\mc{A}$ for each $n$.  It must therefore be that $\mc{A} \models \exists \vec{y}\, \forall \vec{z}\, \Psi(a_0, \dots, a_{m-1}, \vec{y}, \vec{z})$.  The sequence $a_0, \dots, a_{m-1} \in |\mc{A}|$ was arbitrary, so we have shown that $\mc{A} \models \forall \vec{x}\, \exists \vec{y}\, \forall \vec{z} \, \Psi(\vec{x}, \vec{y}, \vec{z})$.  That is, $\mc{A} \models \Phi$.
\end{proof}

The next theorem is our version of {\L}o\'{s}'s theorem for cohesive powers of $n$\nobreakdash-decidable structures, which refines the \emph{fundamental theorem of cohesive powers} from~\cite{DimitrovCohPow}.

\begin{Theorem}\label{thm-LosGeneral}
Let $\mf{L}$ be a computable language, let $\mc{A}$ be an $n$\nobreakdash-decidable $\mf{L}$\nobreakdash-structure, and let $C$ be a cohesive set.

\begin{enumerate}[(1)]
\item\label{it-LosDelta2Param} Let $\Phi(v_0, \dots, v_{m-1})$ be a $\Delta_{n+2}$ formula.  Then for any $[\varphi_0], \dots, [\varphi_{m-1}] \in |\prod_C \mc{A}|$,
\begin{align*}
\prod\nolimits_C \mc{A} \models \Phi([\varphi_0], \dots, [\varphi_{m-1}]) \quad\Biimp\quad C \subseteq^* \bigl\{n : \mc{A} \models \Phi(\varphi_0(n), \dots, \varphi_{m-1}(n))\bigr\}.
\end{align*}

\medskip

\item\label{it-LosDelta3Sent} Let $\Phi$ be a $\Delta_{n+3}$ sentence.  Then $\mc{A} \models \Phi$ if and only if $\prod_C \mc{A} \models \Phi$.

\medskip

\item\label{it-LosSigma3Sent} Let $\Phi$ be a $\Sigma_{n+3}$ sentence.  If $\mc{A} \models \Phi$, then $\prod_C \mc{A} \models \Phi$.
\end{enumerate}
\end{Theorem}

\begin{proof}
Item~\ref{it-LosDelta2Param} is the special case of Theorem~\ref{thm-LosProdParam} item~\ref{it-LosProdPramDelta2} in which each structure $\mc{A}_i$ is $\mc{A}$.  Item~\ref{it-LosDelta3Sent} follows from item~\ref{it-LosSigma3Sent} because if $\Phi$ is a $\Delta_{n+3}$ sentence, then both $\Phi$ and $\neg\Phi$ are logically equivalent to $\Sigma_{n+3}$ sentences.  For item~\ref{it-LosSigma3Sent}, consider the sentence $\neg \Phi$, which is logically equivalent to a $\Pi_{n+3}$ sentence $\Theta$.  The sentence $\Theta$ thus has the form $\forall \vec{x}\, \exists \vec{y}\, \forall \vec{z} \, \Psi(\vec{x}, \vec{y}, \vec{z})$, where $\Psi(\vec{x}, \vec{y}, \vec{z})$ is $\Sigma_n$.   The formula $\Psi(\vec{x}, \vec{y}, \vec{z})$ is decidable in $\mc{A}$ because $\mc{A}$ is $n$\nobreakdash-decidable.  Therefore $\prod_C \mc{A} \models \Theta$ implies that $\mc{A} \models \Theta$ by Lemma~\ref{lem-LosSigma3}.  The contrapositive yields that $\mc{A} \models \Phi$ implies that $\prod_C \mc{A} \models \Phi$.
\end{proof}

Corollary~\ref{cor-LosSigma3Tight} below shows that Theorem~\ref{thm-LosGeneral} item~\ref{it-LosSigma3Sent} is tight in general.  As mentioned above, we recover {\L}o\'{s}'s theorem for all first-order sentences when we consider cohesive powers of decidable structures.  This is essentially the same as Nelson's~\cite{Nelson}*{Theorem~0.5}.  See also~\cite{DimitrovCohPow}.

\begin{Corollary}\label{cor-DecLos}
Let $\mf{L}$ be a computable language, and let $C$ be a cohesive set.
\begin{enumerate}[(1)]
\item\label{it-FullLosSeq} Let $(\mc{A}_i : i \in \Nb)$ be a sequence of uniformly decidable $\mf{L}$\nobreakdash-structures, and let $\Phi(v_0, \dots, v_{m-1})$ be a formula with all free variables displayed.  Then for any $[\varphi_0], \dots, [\varphi_{m-1}] \in |\prod_C \mc{A}_i|$,
\begin{align*}
\prod\nolimits_C \mc{A}_i \models \Phi([\varphi_0], \dots, [\varphi_{m-1}]) \quad\Biimp\quad C \subseteq^* \bigl\{i : \mc{A}_i \models \Phi(\varphi_0(i), \dots, \varphi_{m-1}(i))\bigr\}.
\end{align*}
In particular, if $\Phi$ is a sentence, then
\begin{align*}
\prod\nolimits_C \mc{A}_i \models \Phi \quad\Biimp\quad C \subseteq^* \{i : \mc{A}_i \models \Phi\}.
\end{align*}

\medskip

\item\label{it-FullLosStr} If $\mc{A}$ is a decidable $\mf{L}$\nobreakdash-structure, then $\mc{A}$ and $\prod_C \mc{A}$ are elementarily equivalent.
\end{enumerate}
\end{Corollary}

\begin{proof}
Item~\ref{it-FullLosSeq} follows from Theorem~\ref{thm-LosProdParam} because the structures $(\mc{A}_i : i \in \Nb)$ are uniformly $n$\nobreakdash-decidable for every $n$.  Item~\ref{it-FullLosStr} follows from the special case of item~\ref{it-FullLosSeq} in which $\mc{A}_i$ is $\mc{A}$ for each $i$.  Item~\ref{it-FullLosStr} also follows from Theorem~\ref{thm-LosGeneral} item~\ref{it-LosDelta3Sent} because $\mc{A}$ is $n$\nobreakdash-decidable for every $n$.
\end{proof}

We pause to point out that we can recover a version of Skolem's countable non-standard model of arithmetic by relativizing everything to $0^{(\omega)}$, the $\omega$\textsuperscript{th} Turing jump of $0$.  Let $\mc{N} = (\Nb, +, \times, <)$ denote the standard model of arithmetic.  Then $\mc{N}$ is a decidable structure relative to $0^{(\omega)}$.  Therefore $\mc{N} \equiv \prod_C^{0^{(\omega)}} \mc{N}$ for any $C$ that is cohesive relative to $0^{(\omega)}$ by the relativized version of Corollary~\ref{cor-DecLos}.  Thus $\prod_C^{0^{(\omega)}} \mc{N}$ is a countable non-standard model of arithmetic.  The superscript $0^{(\omega)}$ in $\prod_C^{0^{(\omega)}} \mc{N}$ indicates that we relativize the cohesive power construction to $0^{(\omega)}$ by requiring that $C$ be cohesive for the collection of sets that are c.e.\ relative to $0^{(\omega)}$ and by building the cohesive power from functions that are partial computable relative to $0^{(\omega)}$.

\subsection*{Reducts, substructures, and disjoint unions}

Cohesive products respect reducts of computable structures.  Let $\mf{L} \subseteq \mf{L}^+$ be two languages, and let $\mc{A}$ be an $\mf{L}^+$\nobreakdash-structure.  Then the \emph{reduct} $\mc{A} \rst \mf{L}$ of $\mc{A}$ to $\mf{L}$ is the $\mf{L}$\nobreakdash-structure obtained from $\mc{A}$ by forgetting about the symbols of $\mf{L}^+ \setminus \mf{L}$.  If $\mf{L} \subseteq \mf{L}^+$ are computable languages and $\mc{A}$ is a computable $\mf{L}^+$\nobreakdash-structure, then $\mc{A} \rst \mf{L}$ is a computable $\mf{L}$\nobreakdash-structure.

Many of our arguments make implicit use of the following proposition.
\begin{Proposition}\label{Prop-reduct}
Let $\mf{L} \subseteq \mf{L}^+$ be computable languages, let $(\mc{A}_n : n \in \Nb)$ be a uniformly computable sequence of $\mf{L}^+$\nobreakdash-structures, and let $C$ be a cohesive set.  Then
\begin{align*}
\prod\nolimits_C (\mc{A}_n \rst \mf{L}) \;\iso\; \Bigl( \prod\nolimits_C \mc{A}_n \Bigr) \rst \mf{L}.
\end{align*}
Thus in the case of a single computable $\mf{L}^+$\nobreakdash-structure $\mc{A}$,
\begin{align*}
\prod\nolimits_C (\mc{A} \rst \mf{L}) \;\iso\; \Bigl( \prod\nolimits_C \mc{A} \Bigr) \rst \mf{L}.
\end{align*}
\end{Proposition}

\begin{proof}
The $\mf{L}$\nobreakdash-structures $\prod_C (\mc{A}_n \rst \mf{L})$ and $\bigl( \prod_C \mc{A}_n \bigr) \rst \mf{L}$ share the same domain, which is the set of all $=_C$\nobreakdash-equivalence classes $[\varphi]$ of partial computable functions $\varphi$ such that $\forall n \, (\varphi(n)\da \,\imp\, \varphi(n) \in |\mc{A}_n|)$ and $C \subseteq^* \dom(\varphi)$.  One then checks that the identity map is the desired isomorphism.
\end{proof}

Cohesive powers respect computable substructures and finite disjoint unions of computable structures.

\begin{Proposition}\label{prop-substr}
Let $\mf{L}$ be a computable language with a unary relation symbol $U$.  Let $(\mc{A}_n : n \in \Nb)$ be a uniformly computable sequence of $\mf{L}$\nobreakdash-structures, and suppose that $\{a \in |\mc{A}_n| : \mc{A}_n \models U(a)\}$ forms the domain of a computable substructure $\mc{B}_n$ of $\mc{A}_n$ for every $n$.  Let $C$ be a cohesive set.  Then $\bigl\{[\varphi] \in |\prod_C \mc{A}_n| : \prod_C \mc{A}_n \models U([\varphi])\bigr\}$ forms the domain of a substructure $\mc{D}$ of $\prod_C \mc{A}_n$, and $\prod_C \mc{B}_n \iso \mc{D}$.
\end{Proposition}

\begin{proof}
Let $f$ be an $m$\nobreakdash-ary function symbol of $\mf{L}$, and let $[\varphi_0], \dots, [\varphi_{m-1}]$ be elements of $\{[\varphi] \in |\prod_C \mc{A}_n| : \prod_C \mc{A}_n \models U([\varphi])\}$.  Then $(\forae n \in C)\left(\mc{A}_n \models \bigwedge_{i < m} U(\varphi_i(n))\right)$, so $(\forae n \in C)\bigl[\mc{A}_n \models U(f(\varphi_0(n), \dots, \varphi_{m-1}(n)))\bigr]$ because $\{a \in |\mc{A}_n| : \mc{A}_n \models U(a)\}$ is closed under $f^{\mc{A}_n}$ as it is the domain of the substructure $\mc{B}_n$.  Therefore $\prod_C \mc{A}_n \models U(f([\varphi_0], \dots, [\varphi_{m-1}]))$ by Theorem~\ref{thm-LosProdParam} item~\ref{it-LosProdPramDelta2}.  Thus $\bigl\{[\varphi] \in |\prod_C \mc{A}_n| : \prod_C \mc{A}_n \models U([\varphi])\bigr\}$ is closed under $f^{\prod_C \mc{A}_n}$.  Similar reasoning shows that $\prod_C \mc{A}_n \models U(c)$ for every constant symbol $c$ of $\mf{L}$.  Thus $\bigl\{[\varphi] \in |\prod_C \mc{A}_n| : \prod_C \mc{A}_n \models U([\varphi])\bigr\}$ forms the domain of a substructure of $\prod_C \mc{A}_n$.

Recall that $\mc{B}_n$ is the substructure of $\mc{A}_n$ with domain $\{a \in |\mc{A}_n| : \mc{A}_n \models U(a)\}$ for each $n$, and let $\mc{D}$ be the substructure of $\prod_C \mc{A}_n$ with domain $\bigl\{[\varphi] \in |\prod_C \mc{A}_n| : \prod_C \mc{A}_n \models U([\varphi])\bigr\}$.  In view of the comment following Definition~\ref{def-CohProd}, the domains of $\prod_C \mc{B}_n$ and of $\mc{D}$ are in both cases the $=_C$\nobreakdash-equivalence classes of partial computable functions $\varphi$ such that $(\forae n \in C)\bigl(\varphi(n)\da \,\andd\, U^{\mc{A}_n}(\varphi(n))\bigr)$.  One may then check that the map $[\varphi] \mapsto [\varphi]$ from $\prod_C \mc{B}_n$ to $\mc{D}$ is an isomorphism.
\end{proof}

We usually apply Proposition~\ref{prop-substr} in the case of a single computable structure $\mc{A}$.  In this situation, the proposition says that if $\mc{A}$ is a computable structure, if $\{a \in |\mc{A}| : \mc{A} \models U(a)\}$ forms the domain of a computable substructure $\mc{B}$ of $\mc{A}$, and if $C$ is cohesive, then $\bigl\{[\varphi] \in |\prod_C \mc{A}| : \prod_C \mc{A} \models U([\varphi])\bigr\}$ forms the domain of a substructure $\mc{D}$ of $\prod_C \mc{A}$ with $\prod_C \mc{B} \iso \mc{D}$.

\begin{Definition}
Let $\mf{L}$ be a relational language, and let $\mc{A}_0, \dots, \mc{A}_{k-1}$ be $\mf{L}$\nobreakdash-structures for some $k > 0$.  The \emph{disjoint union} of $\mc{A}_0, \dots, \mc{A}_{k-1}$ is the $\mf{L}$\nobreakdash-structure $\bigsqcup_{i < k} \mc{A}_i$ with domain $\bigcup_{i < k}(\{i\} \times |\mc{A}_i|)$ defined as follows.  For every $m$\nobreakdash-ary relation symbol $R$ and every $(i_0, x_0), \dots, (i_{m-1}, x_{m-1}) \in \bigl|\bigsqcup_{i < k} \mc{A}_i\bigr|$, the relation $R^{\bigsqcup_{i < k} \mc{A}_i}((i_0, x_0), \dots, (i_{m-1}, x_{m-1}))$ holds if and only if $i_0 = \cdots = i_{m-1} = i$ for some $i < k$ and $R^{\mc{A}_i}(x_0, \dots, x_{m-1})$ holds.
\end{Definition}
In the case of computable $\mf{L}$\nobreakdash-structures $\mc{A}_0, \dots, \mc{A}_{k-1}$ for a computable relational language $\mf{L}$, one may use the pairing function to compute a copy of $\bigsqcup_{i < k} \mc{A}_i$.  Thus if $\mc{A}_0, \dots, \mc{A}_{k-1}$ are computable structures, then so is $\bigsqcup_{i < k} \mc{A}_i$.

\begin{Proposition}\label{prop-DU}
Let $\mf{L}$ be a computable relational language, let $\mc{A}_0, \dots, \mc{A}_{k-1}$ be computable $\mf{L}$\nobreakdash-structures for some $k > 0$, and let $C$ be cohesive.  Then
\begin{align*}
\prod\nolimits_C\left(\bigsqcup_{i < k} \mc{A}_i\right) \;\iso\; \bigsqcup_{i < k}\left(\prod\nolimits_C \mc{A}_i\right).
\end{align*}
\end{Proposition}

\begin{proof}
Expand $\mf{L}$ to $\mf{L}^+ = \mf{L} \cup \{U_0, \dots, U_{k-1}\}$, where $U_0, \dots, U_{k-1}$ are $k$ fresh unary relation symbols.  Expand $\bigsqcup_{i < k} \mc{A}_i$ to a computable $\mf{L}^+$\nobreakdash-structure by interpreting $U_i$ as the domain of $\mc{A}_i$ for each $i < k$:  $U_i^{\bigsqcup_{i < k} \mc{A}_i}(x)$ holds if and only if $\pi_0(x) = i$.  Then for each $x \in \bigl|\bigsqcup_{i < k} \mc{A}_i\bigr|$ there is a unique $i < k$ for which $U_i^{\bigsqcup_{i < k} \mc{A}_i}(x)$ holds:
\begin{align*}
\bigsqcup_{i < k} \mc{A}_i \models \forall x\, \left[ \left( \bigvee_{i < k}U_i(x) \right) \;\andd\; \left( \bigwedge_{\substack{i,j < k \\ i \neq j}}(U_i(x) \imp \neg U_j(x)) \right) \right].\tag{$*$}\label{eq-OneU}
\end{align*}
Furthermore, for each $m$\nobreakdash-ary relation symbol $R \in \mf{L}$, if $R^{\bigsqcup_{i < k} \mc{A}_i}(x_0, \dots, x_{m-1})$ holds for some $x_0, \dots, x_{m-1} \in \bigl|\bigsqcup_{i < k} \mc{A}_i\bigr|$, then there is an $i < k$ such that $U_i^{\bigsqcup_{i < k} \mc{A}_i}(x_j)$ holds for all $j < m$:
\begin{align*}
\bigsqcup_{i < k} \mc{A}_i \models \forall x_0, \dots, x_{m-1}\, \left[ R(x_0, \dots, x_{m-1}) \;\imp\; \bigvee_{i < k}\bigwedge_{j < m} U_i(x_j) \right].\tag{$\star$}\label{eq-SameU}
\end{align*}
The $\mf{L}^+$\nobreakdash-sentences in~\eqref{eq-OneU} and~\eqref{eq-SameU} are $\Pi_1$, therefore $\prod_C (\bigsqcup_{i < k} \mc{A}_i)$ also satisfies all of these sentences by Theorem~\ref{thm-LosGeneral} item~\ref{it-LosDelta3Sent}.  For each $i < k$, let $\mc{D}_i$ be the $\mf{L}$\nobreakdash-substructure of $\prod_C (\bigsqcup_{i < k} \mc{A}_i)$ whose domain consists of the $[\varphi]$ for which $\prod_C (\bigsqcup_{i < k} \mc{A}_i) \models U_i([\varphi])$.  Then $\prod_C (\bigsqcup_{i < k} \mc{A}_i) \iso \bigsqcup_{i < k} \mc{D}_i$ as $\mf{L}$\nobreakdash-structures.  This is because each $[\varphi] \in \bigl|\prod_C (\bigsqcup_{i < k} \mc{A}_i)\bigr|$ is in $|\mc{D}_i|$ for exactly one $i < k$; and if $R^{\prod_C (\bigsqcup_{i < k} \mc{A}_i)}([\varphi_0], \dots, [\varphi_{m-1}])$ holds for an $m$\nobreakdash-ary relation symbol $R \in \mf{L}$ and $[\varphi_0], \dots, [\varphi_{m-1}] \in \bigl|\prod_C (\bigsqcup_{i < k} \mc{A}_i)\bigr|$, then $[\varphi_0], \dots, [\varphi_{m-1}]$ must all be in $|\mc{D}_i|$ for the same $i < k$.  We have that $\mc{D}_i \iso \prod_C \mc{A}_i$ as $\mf{L}$\nobreakdash-structures for each $i < k$ by Proposition~\ref{prop-substr}, so $\prod_C\Bigl(\bigsqcup_{i < k} \mc{A}_i\Bigr) \;\iso\; \bigsqcup_{i < k}(\prod_C \mc{A}_i)$ as $\mf{L}$\nobreakdash-structures.
\end{proof}

\subsection*{Saturation}

There are many classical results concerning the saturation of ultraproducts.  See, for example,~\cite{ChangKeislerBook}*{Section~6.1}.  One well-known result is that, for a countable language, ultraproducts over countably incomplete ultrafilters are always $\aleph_1$\nobreakdash-saturated (see~\cite{ChangKeislerBook}*{Theorem~6.1.1}).  Here we show that cohesive products of uniformly decidable structures are always recursively saturated (which is essentially due to Nelson~\cite{Nelson}) and that, for $n > 0$, cohesive products of uniformly $n$\nobreakdash-decidable structures are always $\Sigma_n$\nobreakdash-recursively saturated.  Furthermore, we show that if the cohesive set is assumed to be co-c.e., then we obtain the $n=0$ case and can also squeeze one more level of saturation out of the cohesive product:  cohesive products of uniformly $n$\nobreakdash-decidable structures over co-c.e.\ cohesive sets are always $\Sigma_{n+1}$\nobreakdash-recursively saturated.

We follow the terminology of~\cite{KayeBook}*{Section~11.2} regarding types and saturation.  Beware that what we call a \emph{type over a structure $\mc{A}$}, other authors may call a \emph{type over a finite set $\{c_0, \dots, c_{\ell-1}\}$ of parameters from $\mc{A}$}.  Let $\mf{L}$ be a language, and let $\mc{A}$ be an $\mf{L}$\nobreakdash-structure.  Consider a set $p(\vec{x}) = p(x_0, \dots, x_{m-1})$ of formulas of the form $\Phi(\vec{x}; \vec{c}) = \Phi(x_0, \dots, x_{m-1}; c_0, \dots, c_{\ell-1})$ in the language $\mf{L} \cup \{c_0, \dots, c_{\ell-1}\}$, where $x_0, \dots, x_{m-1}$ are $m$ fixed variables and $c_0, \dots, c_{\ell-1}$ are $\ell$ fixed parameters from $|\mc{A}|$ that are identified with fresh constant symbols.  Such a set $p(\vec{x})$ is called a \emph{type over $\mc{A}$} if it is finitely satisfied in $\mc{A}$:  for every $\Phi_0(\vec{x}; \vec{c}), \dots, \Phi_{k-1}(\vec{x}; \vec{c}) \in p(\vec{x})$, $\mc{A} \models \exists \vec{x} \, \bigwedge_{i < k} \Phi_i(\vec{x}; \vec{c})$.  A type $p(\vec{x})$ over $\mc{A}$ is \emph{realized} if there are $a_0, \dots, a_{m-1} \in |\mc{A}|$ such that for all $\Phi(\vec{x}; \vec{c}) \in p(\vec{x})$, $\mc{A} \models \Phi(\vec{a}; \vec{c})$.  A type $p(\vec{x})$ over $\mc{A}$ is a $\Sigma_n$\nobreakdash-type if every formula in $p(\vec{x})$ is $\Sigma_n$.  An $\mf{L}$\nobreakdash-structure $\mc{A}$ is \emph{recursively saturated} if it realizes every computable type over $\mc{A}$.  Similarly, $\mc{A}$ is \emph{$\Sigma_n$\nobreakdash-recursively saturated} if it realizes every computable $\Sigma_n$\nobreakdash-type over $\mc{A}$. 

When discussing a formula $\Phi(\vec{x}; \vec{c})$ of a type $p(\vec{x})$, we write $\Phi(\vec{x}; \vec{y})$ for the corresponding $\mf{L}$\nobreakdash-formula, with fresh variables $\vec{y}$ in place of the constants $\vec{c}$.  We sometimes write $p(\vec{x}; \vec{c})$ or $p(\vec{x}; \vec{y})$ in place of $p(\vec{x})$ when we want to emphasize the type's parameters or the corresponding variables.

\begin{Lemma}\label{lem-SatGen}
Let $\mf{L}$ be a computable language, let $(\mc{A}_n : n \in \Nb)$ be a uniformly computable sequence of $\mf{L}$\nobreakdash-structures, and let $C$ be cohesive.  Let $p(\vec{x}; \vec{c})$ be a computable type over $\prod_C \mc{A}_n$ with computable enumeration $(\Phi_i : i \in \Nb)$.  Assume that the formulas
\begin{align*}
\left(\exists \vec{x}\; \bigwedge_{i < k} \Phi_i(\vec{x}; \vec{y}) : k \in \Nb\right)
\end{align*}
are uniformly decidable in the structures $(\mc{A}_n : n \in \Nb)$.  Then $\prod_C \mc{A}_n$ realizes $p(\vec{x}; \vec{c})$.
\end{Lemma}

\begin{proof}
Let $\vec{c} = [\psi_0], \dots, [\psi_{\ell-1}]$ be the parameters of the type $p(\vec{x}; \vec{c})$, so that
\begin{align*}
\prod\nolimits_C \mc{A}_n \models \exists \vec{x}\; \bigwedge_{i < k} \Phi_i(\vec{x}; [\psi_0], \dots, [\psi_{\ell-1}]) 
\end{align*}
for each $k$.  To ease notation, pack $\psi_0, \dots, \psi_{\ell-1}$ into a single partial computable function $\psi \colon \Nb \imp \Nb^\ell$ given by $\psi(n) \keq \la \psi_0(n), \dots, \psi_{\ell-1}(n) \ra$.  Notice that $C \subseteq^* \dom(\psi)$.  Write $\ora{[\psi]}$ as an abbreviation for $[\psi_0], \dots, [\psi_{\ell-1}]$.

Define a partial computable function $\varphi \colon \Nb \imp \Nb^m$ as follows.  Given $n$, first search for the greatest $k \leq n$ such that
\begin{align*}
\mc{A}_n \models \exists \vec{x}\; \bigwedge_{i < k} \Phi_i(\vec{x}; \psi(n)).
\end{align*}
This search is effective on account of the uniform decidability assumption.  If such a $k$ is found, search for the first tuple $\vec{a} = \la a_0, \dots, a_{m-1} \ra$ such that $\mc{A}_n \models \bigwedge_{i < k} \Phi_i(\vec{a}; \psi(n))$, and set $\varphi(n) = \vec{a}$.  If there is no such $k$, then $\varphi(n)\ua$.

Consider a fixed $k$.  By assumption, $\prod_C \mc{A}_n \models \exists \vec{x}\; \bigwedge_{i < k} \Phi_i\left(\vec{x}; \ora{[\psi]}\right)$.  Thus $C \subseteq^* \bigl\{n : \mc{A}_n \models \exists \vec{x}\, \bigwedge_{i < k} \Phi_i(\vec{x}; \psi(n))\bigr\}$ by Lemma~\ref{lem-LosDec}.  This means that for almost every $n \in C$ with $n \geq k$, the initial search in the computation of $\varphi(n)$ succeeds and finds a $\wh{k}$ with $k \leq \wh{k} \leq n$.  Thus $C \subseteq^* \dom(\varphi)$ and 
\begin{align*}
C \subseteq^* \left\{n : \mc{A}_n \models \bigwedge_{i < k} \Phi_i(\varphi(n); \psi(n))\right\}.
\end{align*}
Let $\varphi_i = \pi_i \circ \varphi$ for each $i < m$.  Then $[\varphi_0], \dots, [\varphi_{m-1}] \in |\prod_C \mc{A}_n|$.  Let $\ora{[\varphi]}$ abbreviate the sequence $[\varphi_0], \dots, [\varphi_{m-1}]$.  Then
$\prod_C \mc{A}_n \models \bigwedge_{i < k} \Phi_i \left(\ora{[\varphi]}; \ora{[\psi]}\right)$ by Lemma~\ref{lem-LosDec}.  This implies that $\prod_C \mc{A}_n \models \Phi_i \left(\ora{[\varphi]}; \ora{[\psi]}\right)$ for every $i$.  Thus $[\varphi_0], \dots, [\varphi_{m-1}]$ realize $p(\vec{x}; \ora{[\psi]})$ in $\prod_C \mc{A}_n$.
\end{proof}

Items~\ref{it-DecRecSatSeq} and~\ref{it-DecRecSatStr} of the next theorem are essentially~\cite{Nelson}*{Theorem~2.2}.

\begin{Theorem}\label{thm-SatGen}
Let $\mf{L}$ be a computable language, and let $C$ be a cohesive set.
\begin{enumerate}[(1)]
\item\label{it-DecRecSatSeq} Let $(\mc{A}_i : i \in \Nb)$ be a sequence of uniformly decidable $\mf{L}$\nobreakdash-structures.  Then $\prod_C \mc{A}_i$ is recursively saturated.

\medskip

\item\label{it-nDecRecSatSeq} Let $(\mc{A}_i : i \in \Nb)$ be a sequence of uniformly $n$\nobreakdash-decidable $\mf{L}$\nobreakdash-structures for an $n > 0$.  Then $\prod_C \mc{A}_i$ is $\Sigma_n$\nobreakdash-recursively saturated.

\medskip

\item\label{it-DecRecSatStr} Let $\mc{A}$ be a decidable $\mf{L}$\nobreakdash-structure.  Then $\prod_C \mc{A}$ is recursively saturated.

\medskip

\item\label{it-nDecRecSatStr} Let $\mc{A}$ be an $n$\nobreakdash-decidable $\mf{L}$\nobreakdash-structure for an $n > 0$.  Then $\prod_C \mc{A}$ is $\Sigma_n$\nobreakdash-recursively saturated.
\end{enumerate}
\end{Theorem}

\begin{proof}
Item~\ref{it-DecRecSatSeq} follows directly from Lemma~\ref{lem-SatGen} because every computably enumerable sequence of formulas is uniformly decidable in $(\mc{A}_i : i \in \Nb)$.  Item~\ref{it-nDecRecSatSeq} also follows from Lemma~\ref{lem-SatGen}.  If $(\Phi_i : i \in \Nb)$ is a computable enumeration of $\Sigma_n$ formulas and $n > 0$, then
\begin{align*}
\left(\exists \vec{x}\; \bigwedge_{i < k} \Phi_i(\vec{x}; \vec{y}) : k \in \Nb\right)
\end{align*}
is a computable enumeration of (formulas that are logically equivalent to) $\Sigma_n$ formulas and hence is uniformly decidable in $(\mc{A}_i : i \in \Nb)$.  Items~\ref{it-DecRecSatStr} and~\ref{it-nDecRecSatStr} are the special cases of items~\ref{it-DecRecSatSeq} and~\ref{it-nDecRecSatSeq} in which $\mc{A}_i$ is $\mc{A}$ for each $i$.
\end{proof}

If we restrict to co-c.e.\ cohesive sets, then we can include $n=0$ and improve $\Sigma_n$\nobreakdash-recursive saturation to $\Sigma_{n+1}$\nobreakdash-recursive saturation in Theorem~\ref{thm-SatGen} items~\ref{it-nDecRecSatSeq} and~\ref{it-nDecRecSatStr}.

\begin{Lemma}\label{lem-SatCoCe}
Let $\mf{L}$ be a computable language, let $(\mc{A}_n : n \in \Nb)$ be a uniformly computable sequence of $\mf{L}$\nobreakdash-structures, and let $C$ be co-c.e.\ and cohesive.  Let $p(\vec{x}; \vec{c})$ be a computable type over $\prod_C \mc{A}_n$ consisting of formulas of the form $\exists \vec{z}\, \Phi(\vec{x}, \vec{z}; \vec{c})$, with computable enumeration $(\exists \vec{z}_i\, \Phi_i(\vec{x}, \vec{z}_i; \vec{c}) : i \in \Nb)$.  Further assume that the formulas $(\Phi_i(\vec{x}, \vec{z}_i; \vec{y}) : i \in \Nb)$ are uniformly decidable in the structures $(\mc{A}_n : n \in \Nb)$.  Then $\prod_C \mc{A}_n$ realizes $p(\vec{x}; \vec{c})$.
\end{Lemma}

\begin{proof}
Let $\vec{c} = [\psi_0], \dots, [\psi_{\ell-1}]$ be the parameters of the type $p(\vec{x}; \vec{c})$, so that
\begin{align*}
\prod\nolimits_C \mc{A}_n \models \exists \vec{x}\; \bigwedge_{i < k} \exists \vec{z}_i\, \Phi_i(\vec{x}, \vec{z}_i; [\psi_0], \dots, [\psi_{\ell-1}]) 
\end{align*}
for each $k$.  As in the proof of Lemma~\ref{lem-SatGen}, let $\psi \colon \Nb \imp \Nb^\ell$ be the partial computable function given by $\psi(n) \keq \la \psi_0(n), \dots, \psi_{\ell-1}(n) \ra$.  Notice that $C \subseteq^* \dom(\psi)$, and write $\ora{[\psi]}$ as an abbreviation for $[\psi_0], \dots, [\psi_{\ell-1}]$.

The goal is to partially compute a function $\theta \colon \Nb \imp \Nb^m$ so that $C \subseteq^* \dom(\theta)$ and
\begin{align*}
(\existsinf n \in C) \Bigl( \mc{A}_n \models \exists \vec{z}_i\, \Phi_i(\theta(n), \vec{z}_i; \psi(n)) \Bigr) \tag{$*$}\label{eq-InfSat}
\end{align*}
for each $i$.  The set
\begin{align*}
\Bigl\{n : \mc{A}_n \models \exists \vec{z}_i\, \Phi_i(\theta(n), \vec{z}_i; \psi(n))\Bigr\}
\end{align*}
is c.e.\ for each $i$ because $\Phi_i$ is uniformly decidable in $(\mc{A}_n : n \in \Nb)$.  Thus \eqref{eq-InfSat} implies that 
\begin{align*}
(\forae n \in C) \Bigl( \mc{A}_n \models \exists \vec{z}_i\, \Phi_i(\theta(n), \vec{z}_i; \psi(n)) \Bigr)
\end{align*}
for each $i$ by cohesiveness.   Letting $\varphi_j = \pi_j \circ \theta$ for each $j < m$, we therefore have that $[\varphi_0], \dots, [\varphi_{m-1}] \in |\prod_C \mc{A}_n|$ and that
\begin{align*}
\prod\nolimits_C \mc{A}_n \models \exists \vec{z}_i\, \Phi_i\left(\ora{[\varphi]}, \vec{z}_i; \ora{[\psi]} \right)
\end{align*}
for each $i$ by Lemma~\ref{lem-LosProdParamDelta2Helper}.  Thus $[\varphi_0], \dots, [\varphi_{m-1}]$ realize $p(\vec{x}; \ora{[\psi]})$ in $\prod_C \mc{A}_n$.

The strategy for partially computing $\theta$ is to keep track of the numbers $k$ that are \emph{covered}, meaning that it looks like there is an $n \in C$ with $n > k$ such that $\mc{A}_n \models \bigwedge_{i < k} \exists \vec{z}_i\, \Phi_i(\theta(n), \vec{z}_i; \psi(n))$.  As the computation progresses, a $k$ that is covered may become uncovered because the $n$ that covers it is enumerated in the complement of $C$.  When this happens, we note the least $k$ that becomes uncovered, we search for the first $n > k$ where $\theta(n)$ is not yet defined, it looks like $n \in C$, and there looks to be an $\vec{a} \in \Nb^m$ such that $\mc{A}_n \models \bigwedge_{i < k} \exists \vec{z}_i\, \Phi_i(\vec{a}, \vec{z}_i; \psi(n))$, and we attempt to cover $k$ again by setting $\theta(n) = \vec{a}$.  This strategy eventually succeeds because if $n_0 \in C$ is sufficiently large and we never choose a smaller member of $C$ to cover $k$, then we eventually choose $n_0$ to cover either $k$ or an even bigger number.

Formally, let $W$ denote the c.e.\ set $\ol{C}$, and let $(W_s)_{s \in \Nb}$ be a computable $\subseteq$\nobreakdash-increasing enumeration of $W$.  Let $(U_k : k \in \Nb)$ be the uniformly c.e.\ sequence of sets given by
\begin{align*}
U_k = \left\{\la \vec{a}, n \ra \in \Nb^m \times \Nb: \mc{A}_n \models \bigwedge_{i < k} \exists \vec{z}_i\, \Phi_i(\vec{a}, \vec{z}_i; \psi(n))\right\}
\end{align*}
with uniformly computable $\subseteq$\nobreakdash-increasing enumerations $(U_{k,s})_{s \in \Nb}$ for each $k$.  The sequence $(U_k : k \in \Nb)$ is uniformly c.e.\ because the formulas $(\Phi_i : i \in \Nb)$ are uniformly decidable in $(\mc{A}_n : n \in \Nb)$.  Observe that if $k_0 \leq k_1$, then $U_{k_1} \subseteq U_{k_0}$.

To partially compute $\theta$, we compute an increasing sequence $\theta_0 \subseteq \theta_1 \subseteq \theta_2 \subseteq \cdots$ of finite approximations to $\theta$.  Start at stage $0$ with $\theta_0 = \emptyset$.  At stage $s$, we have $\theta_s$ and we define $\theta_{s+1}$.

Say that \emph{$n$ covers $k$ at stage $s$} if
\begin{itemize}
\item $n > k$,

\smallskip

\item $n \notin W_s$,

\smallskip

\item $\theta_s(n)\da$, and

\smallskip

\item $\la \theta_s(n), n \ra \in U_{k,s}$.
\end{itemize}
If there is an $n$ that covers $k$ at stage $s$, then also say that $k$ is \emph{covered} at stage $s$.  Let $k^0_s$ be the least number that is not covered at stage $s$.  If $s > 0$, let $X_s = W_s \setminus W_{s-1}$.  Let $k^1_s$ be the least number, if it exists, for which some $n \in X_s$ covered $k^1_s$ at stage $s-1$ but no $m < n$ covers $k^1_s$ at stage $s$.  If $k^1_s$ is defined, let $k_s = \min\{k^0_s, k^1_s\}$.  Otherwise, let $k_s = k^0_s$.  Now search for the least $n > k_s$ such that $n \notin W_s$, such that $\theta_s(n)\ua$, and such that $\la \vec{a}, n \ra \in U_{k_s, s}$ for some $\vec{a}$.  If there is such an $n$, let $\vec{a}$ be the first corresponding $\vec{a}$, and extend $\theta_s$ to $\theta_{s+1}$ by setting $\theta_{s+1}(n) = \vec{a}$.  If there is no such $n$, then set $\theta_{s+1} = \theta_s$.  Go to stage $s+1$.  This completes the partial computation of $\theta$.

If $n$ covers $k$ at some stage $s$, there could be a later stage $t > s$ at which $n$ does not cover $k$ because $n \in W_t$.  However, if $n \in C$, then $n \notin W_t$ for every $t$, so $k$ stays covered by $n$ forever.

\begin{Claim*}
Every $k$ is eventually covered by an $n \in C$.
\end{Claim*}
\begin{proof}[Proof of Claim]
Proceed by induction on $k$.  Let $s_0$ be a stage by which all $\wh{k} < k$ have been covered by members of $C$.  Let $c$ be the greatest member of $C$ covering a $\wh{k} < k$ at stage $s_0$, and let $s_1 > s_0$ be a stage such that $W_{s_1} \rst c = W \rst c$.  Then $k_s \geq k$ at all stages $s > s_1$.  By assumption,
\begin{align*}
\prod\nolimits_C \mc{A}_n \models \exists \vec{x}\, \bigwedge_{i < k} \exists \vec{z}_i\, \Phi_i\left(\vec{x}, \vec{z}_i; \ora{[\psi]} \right),
\end{align*}
and therefore
\begin{align*}
C \subseteq^* \left\{n : \mc{A}_n \models \exists \vec{x}\, \bigwedge_{i < k} \exists \vec{z}_i\, \Phi_i\left(\vec{x}, \vec{z}_i; \psi(n) \right)\right\}
\end{align*}
by Lemma~\ref{lem-LosProdParamDelta2Helper}.  To see that Lemma~\ref{lem-LosProdParamDelta2Helper} applies here, pull the $\exists \vec{z}_i$ quantifiers out in front of the conjunction.  The resulting formula has the form $\exists \vec{w}\, \Psi(\vec{w}; \vec{y})$, where $\Psi$ is uniformly decidable in $(\mc{A}_n : n \in \Nb)$.  Let $n_0$ be least such that $n_0 > k$, such that $n_0 \in C$, such that $\theta_{s_1}(n_0)\ua$, such that $\psi(n_0)\da$, and such that
\begin{align*}
\mc{A}_{n_0} \models \exists \vec{x}\, \bigwedge_{i < k} \exists \vec{z_i}\, \Phi_i(\vec{x}, \vec{z}_i; \psi(n_0)).
\end{align*}
If $\theta_s(n_0)\da$ for the first time at some stage $s > s_1$, it is to cover some $j \geq k$.  As $n_0 \in C$, we have that $n_0$ covers $j$ and therefore covers $k$ at all later stages.

Let $s_2 > s_1$ be large enough so that $W_{s_2} \rst n_0 = W \rst n_0$ and so that there is an $\vec{a}$ with $\la \vec{a}, n_0 \ra \in U_{k, s_2}$.  Consider stage $s_2$.  If $k$ is not covered at stage $s_2$, then it must be that $\theta_{s_2}(n_0)\ua$.  In this case, $k_{s_2} = k$, and $n_0$ is least such that $n_0 > k_{s_2}$, $n_0 \notin W_{s_2}$, $\theta_{s_2}(n_0)\ua$, and $\la \vec{a}, n_0 \ra \in U_{k_{s_2}, s_2}$ for some $\vec{a}$.  So $\theta_{s_2 + 1}(n_0)$ is defined to cover $k$ at stage $s_2$.

Suppose instead that $k$ is covered at stage $s_2$.  In this case, let $n_1$ be least such that there is a stage $s_3 \geq s_2$ at which $n_1$ covers $k$.  If $n_1 \in C$, then this is as desired.  Otherwise, $n_1 \in W$, in which case there is a least $s > s_3$ with $n_1 \in W_s$.  The number $n_1$ covers $k$ at stage $s-1$, but by choice of $n_1$, no $n < n_1$ covers $k$ at stage $s$.  Thus $k^1_s = k$, so $k_s = k$.  If $\theta_s(n_0)\da$, then $n_0$ must already cover $k$, as noted above.  If $\theta_s(n_0)\ua$, then $n_0$ is least such that $n_0 > k_s$, $n_0 \notin W_s$, $\theta_s(n_0)\ua$, and $\la \vec{a}, n_0 \ra \in U_{k_s, s}$ for some $\vec{a}$.  So $\theta_{s+1}(n_0)$ is defined to cover $k$ at stage $s$.  This completes the proof of the claim.
\end{proof}

To finish the proof, consider the formula $\exists \vec{z}_i\, \Phi_i$.  By the claim, every $k$ is eventually covered by an $n \in C$.  Thus for every $k > i$, there is an $n > k$ with $n \in C$, $\theta(n)\da$, and $\la \theta(n), n \ra \in U_k$.  Thus $C \subseteq^* \dom(\theta)$ by cohesiveness, and
\begin{align*}
(\existsinf n \in C) \Bigl( \mc{A}_n \models \exists \vec{z}_i\, \Phi_i(\theta(n), \vec{z}_i; \psi(n)) \Bigr)
\end{align*}
as desired.
\end{proof}

\begin{Theorem}\label{thm-SatCoCe}
Let $\mf{L}$ be a computable language, and let $C$ be a co-c.e.\ cohesive set.
\begin{enumerate}[(1)]
\item\label{it-nDecRecSatSeqCoCe} Let $(\mc{A}_i : i \in \Nb)$ be a sequence of uniformly $n$\nobreakdash-decidable $\mf{L}$\nobreakdash-structures.  Then $\prod_C \mc{A}_i$ is $\Sigma_{n+1}$\nobreakdash-recursively saturated.

\medskip

\item\label{it-nDecRecSatStrCoCe} Let $\mc{A}$ be an $n$\nobreakdash-decidable $\mf{L}$\nobreakdash-structure.  Then $\prod_C \mc{A}$ is $\Sigma_{n+1}$\nobreakdash-recursively saturated.
\end{enumerate}
\end{Theorem}

\begin{proof}
Item~\ref{it-nDecRecSatSeqCoCe} follows from Lemma~\ref{lem-SatCoCe}.  A computable $\Sigma_{n+1}$\nobreakdash-type can be computably enumerated as $(\exists \vec{z}_j\, \Phi_j : j \in \Nb)$, where $\Phi_j$ is $\Pi_n$ for every $j$.  The formulas $(\Phi_j : j \in \Nb)$ are then uniformly decidable in $(\mc{A}_i : i \in \Nb)$ because $(\mc{A}_i : i \in \Nb)$ is a uniformly $n$\nobreakdash-decidable sequence of structures.  Item~\ref{it-nDecRecSatStrCoCe} is the special case of item~\ref{it-nDecRecSatSeqCoCe} in which $\mc{A}_i$ is $\mc{A}$ for each $i$.
\end{proof}

The $n=0$ case of Theorem~\ref{thm-SatCoCe} is particularly noteworthy.

\begin{Corollary}\label{cor-SatCoCe}
Let $\mf{L}$ be a computable language, and let $C$ be a co-c.e.\ cohesive set.
\begin{enumerate}[(1)]
\item\label{it-Sigma1RecSatSeqCoCe} Let $(\mc{A}_i : i \in \Nb)$ be a uniformly computable sequence of $\mf{L}$\nobreakdash-structures.  Then $\prod_C \mc{A}_i$ is $\Sigma_1$\nobreakdash-recursively saturated.

\medskip

\item\label{it-Sigma1RecSatStrCoCe} Let $\mc{A}$ be a computable $\mf{L}$\nobreakdash-structure.  Then $\prod_C \mc{A}$ is $\Sigma_1$\nobreakdash-recursively saturated.
\end{enumerate}
\end{Corollary}

\subsection*{Isomorphisms}

Classically, an isomorphism between two structures induces an isomorphism between the corresponding ultrapowers over a fixed ultrafilter.  In the effective case, a computable isomorphism between two computable structures induces an isomorphism between the corresponding cohesive powers over a fixed cohesive set.  This fact essentially appears in~\cite{DimitrovCohPow}, but we include a proof here for completeness.

\begin{Theorem}\label{thm-IsoCohPow}
Let $\mf{L}$ be a computable language, let $\mc{A}_0$ and $\mc{A}_1$ be computable $\mf{L}$\nobreakdash-structures that are computably isomorphic, and let $C$ be cohesive.  Then $\prod_C \mc{A}_0 \iso \prod_C \mc{A}_1$.
\end{Theorem}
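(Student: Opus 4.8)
The plan is to transfer a computable isomorphism between the base structures to an isomorphism between the cohesive powers, using the fact that a computable isomorphism induces a well-defined bijection on equivalence classes that respects all the structure. Let $g \colon |\mc{A}_0| \imp |\mc{A}_1|$ be a computable isomorphism with computable inverse $g^{-1}$. For $[\varphi] \in |\Pi_C \mc{A}_0|$ with $\varphi \colon \Nb \imp |\mc{A}_0|$ partial computable and $C \subseteq^* \dom(\varphi)$, I would define $G([\varphi]) = [g \circ \varphi]$, where $g \circ \varphi$ is the partial computable function $x \mapsto g(\varphi(x))$. Since $g$ is total computable and $\dom(g \circ \varphi) = \dom(\varphi)$, we have $g \circ \varphi \in D$ in the sense of Definition~\ref{def-CohPow}, so $G$ is at least a well-defined map on representatives into $|\Pi_C \mc{A}_1|$.

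The key verifications, carried out in order, are: (i) $G$ is well-defined on $=_C$-classes, i.e., $\varphi =_C \psi$ implies $g \circ \varphi =_C g \circ \psi$; this follows because $g$ is injective, so $\{x : \varphi(x)\da = \psi(x)\da\} = \{x : (g\circ\varphi)(x)\da = (g\circ\psi)(x)\da\}$, and the former being $\supseteq^* C$ gives the latter. (ii) $G$ is a bijection, with inverse $[\chi] \mapsto [g^{-1} \circ \chi]$; this is immediate from $g^{-1} \circ g = \id$ and $g \circ g^{-1} = \id$ together with the fact that $(\id \circ \varphi)$ differs from $\varphi$ nowhere on its domain. (iii) $G$ respects relations: for an $n$-ary predicate symbol $R$, since $g$ is an isomorphism we have $R^{\mc{A}_0}(a_0, \dots, a_{n-1}) \Biimp R^{\mc{A}_1}(g(a_0), \dots, g(a_{n-1}))$ pointwise, so the defining sets in Definition~\ref{def-CohPow} for $R^{\Pi_C \mc{A}_0}([\varphi_0], \dots)$ and $R^{\Pi_C \mc{A}_1}([g\circ\varphi_0], \dots)$ coincide, whence one holds iff the other does. (iv) $G$ respects function symbols: for an $n$-ary $f$, applying Theorem~\ref{thm-FundCohPow}(1) to the term $f(v_0, \dots, v_{n-1})$ in each structure and using that $g$ commutes with $f$ pointwise shows $g \circ \psi =_C \psi'$, where $\psi(x) \keq f^{\mc{A}_0}(\varphi_0(x), \dots, \varphi_{n-1}(x))$ and $\psi'(x) \keq f^{\mc{A}_1}((g\circ\varphi_0)(x), \dots, (g\circ\varphi_{n-1})(x))$, so $G(f^{\Pi_C \mc{A}_0}([\varphi_0], \dots, [\varphi_{n-1}])) = f^{\Pi_C \mc{A}_1}(G[\varphi_0], \dots, G[\varphi_{n-1}])$. (v) $G$ respects constants: $c^{\mc{A}_0}$ is represented by a constant function with value $c^{\mc{A}_0}$, and $g$ maps $c^{\mc{A}_0}$ to $c^{\mc{A}_1}$, so $G(c^{\Pi_C \mc{A}_0}) = c^{\Pi_C \mc{A}_1}$.

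None of these steps presents a genuine obstacle — the proof is essentially bookkeeping — but the point requiring the most care is keeping track of partiality and the ``$C \subseteq^*$'' quantifiers throughout: one must consistently note that composing with a \emph{total} computable $g$ leaves domains unchanged, so that membership in $D$, the $=_C$-relation, and the defining sets for relations and functions all transport cleanly. I would present the argument by defining $G$, then stating and quickly checking (i)–(v) as a short sequence of observations, citing Theorem~\ref{thm-FundCohPow}(1) for the function-symbol case and emphasizing the use of injectivity and surjectivity of $g$ for well-definedness and bijectivity. The totality of $g$ and $g^{-1}$ is what makes everything go through without any appeal to cohesiveness beyond what is already built into Definition~\ref{def-CohPow}.
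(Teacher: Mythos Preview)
Your proposal is correct and follows essentially the same approach as the paper: define the map $[\varphi] \mapsto [g \circ \varphi]$, then verify well-definedness, bijectivity (via $g^{-1}$), and preservation of relations, functions, and constants pointwise. The only cosmetic differences are that the paper proves injectivity and surjectivity separately rather than exhibiting the inverse, and handles the function-symbol case directly rather than invoking Theorem~\ref{thm-FundCohPow}(1).
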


\begin{proof}
We first prove the theorem under the assumption that $\mf{L}$ is a relational language.  Let $\mc{A}_0$ and $\mc{A}_1$ be computable $\mf{L}$\nobreakdash-structures, and let $f \colon |\mc{A}_0| \imp |\mc{A}_1|$ be a computable isomorphism.  Expand the language to $\mf{L}^+ = \mf{L} \cup \{U_0, U_1, R_f\}$, where $U_0$ and $U_1$ are fresh unary relation symbols and $R_f$ is a fresh binary relation symbol.  Expand $\mc{A}_0 \sqcup \mc{A}_1$ to a computable $\mf{L}^+$\nobreakdash-structure by interpreting $U_0$ and $U_1$ as the domains of $\mc{A}_0$ and $\mc{A}_1$ and by interpreting $R_f$ as the graph of $f$.
\begin{itemize}
\item For each $i < 2$, $U_i^{\mc{A}_0 \sqcup \mc{A}_1}(x)$ holds if and only if $\pi_0(x) = i$.

\medskip

\item $R_f^{\mc{A}_0 \sqcup \mc{A}_1}(x,y)$ holds if and only if $\pi_0(x) = 0$, $\pi_0(y) = 1$, and $f(\pi_1(x)) = \pi_1(y)$.
\end{itemize}

The function $f$ is an isomorphism, so $R_f^{\mc{A}_0 \sqcup \mc{A}_1}$ is the graph of an isomorphism between $\mc{A}_0$ and $\mc{A}_1$ as $\mc{L}$\nobreakdash-structures in the $\mf{L}^+$\nobreakdash-structure $\mc{A}_0 \sqcup \mc{A}_1$.  That is, $R_f$ has the following properties in $\mc{A}_0 \sqcup \mc{A}_1$.
\begin{itemize}
\item The domain of $R_f$ corresponds to $|\mc{A}_0|$:  $\forall x \, (\exists y \, R_f(x, y) \;\biimp\; U_0(x))$.

\medskip

\item The image of $R_f$ corresponds to $|\mc{A}_1|$:  $\forall y \, (\exists x \, R_f(x, y) \;\biimp\; U_1(y))$.

\medskip

\item $R_f$ is single-valued on its domain:  $\forall x \forall y_0 \forall y_1 \, (R_f(x, y_0) \andd R_f(x, y_1) \;\imp\; y_0 = y_1)$.

\medskip

\item $R_f$ is injective on its domain:  $\forall x_0 \forall x_1 \forall y \, (R_f(x_0, y) \andd R_f(x_1, y) \;\imp\; x_0 = x_1)$.

\medskip

\item $R_f$ respects the relations of $\mf{L}$:  for every $m$\nobreakdash-ary relation symbol $S \in \mf{L}$,
\begin{align*}
\forall x_0 \cdots \forall x_{m-1} \forall y_0 \cdots \forall y_{m-1} \, \left(\bigwedge_{i < m}R_f(x_i, y_i) \;\imp\; (S(x_0, \dots, x_{m-1}) \biimp S(y_0, \dots, y_{m-1}))\right).
\end{align*}
\end{itemize}

The above properties constitute a collection of $\Pi_2$ $\mf{L}^+$\nobreakdash-sentences that hold in $\mc{A}_0 \sqcup \mc{A}_1$, so they also hold in the cohesive power $\prod_C(\mc{A}_0 \sqcup \mc{A}_1)$ as an $\mf{L}^+$\nobreakdash-structure by Theorem~\ref{thm-LosGeneral} item~\ref{it-LosDelta3Sent}.  For each $i < 2$, let $\mc{D}_i$ denote the substructure of $\prod_C(\mc{A}_0 \sqcup \mc{A}_1)$ with domain given by $U_i$:  $|\mc{D}_i| = \bigl\{[\varphi] : \prod_C(\mc{A}_0 \sqcup \mc{A}_1) \models U_i([\varphi])\bigr\}$.  Then $\prod_C \mc{A}_i \iso \mc{D}_i \rst \mf{L}$ as an $\mf{L}$\nobreakdash-structure for each $i < 2$ by Proposition~\ref{prop-substr}.  In $\prod_C(\mc{A}_0 \sqcup \mc{A}_1)$, $R_f^{\prod_C(\mc{A}_0 \sqcup \mc{A}_1)}$ is the graph of an isomorphism between the reducts $\mc{D}_0 \rst \mf{L}$ and $\mc{D}_1 \rst \mf{L}$.  Therefore $\prod_C \mc{A}_0 \iso \prod_C \mc{A}_1$ as $\mf{L}$\nobreakdash-structures.

Now suppose that $\mf{L}$ contains constant and function symbols in addition to relation symbols.  For uniformity of argument, treat constant symbols as $0$\nobreakdash-ary function symbols.  Let $\mf{L}^{\msf{rel}}$ be the relational language obtained from $\mf{L}$ by replacing each $m$\nobreakdash-ary function symbol $f$ by a fresh $(m+1)$\nobreakdash-ary relation symbol $G_f$ whose intended interpretation is the graph of $f$.  We may translate any $\mf{L}$\nobreakdash-structure $\mc{A}$ into an $\mf{L}^{\msf{rel}}$\nobreakdash-structure $\mc{A}^{\msf{rel}}$ with the same domain by defining
\begin{align*}
G_f^{\mc{A}^{\msf{rel}}}(x_0, \dots, x_{m-1}, y) \;\Biimp\; f^{\mc{A}}(x_0, \dots, x_{m-1}) = y
\end{align*}
for every $m$\nobreakdash-ary function symbol $f \in \mf{L}$ and every $x_0, \dots, x_{m-1}, y \in |\mc{A}|$.  Conversely, suppose that $\mc{A}$ is an $\mf{L}^{\msf{rel}}$\nobreakdash-structure such that for every $m$\nobreakdash-ary function symbol $f \in \mf{L}$, $G_f$ is the graph of a function:  $\forall x_0 \cdots \forall x_{m-1} \exists! y \, G_f^{\mc{A}}(x_0, \dots, x_{m-1}, y)$.  Then we may translate $\mc{A}$ into an $\mf{L}$\nobreakdash-structure $\mc{A}^{\msf{fun}}$ with the same domain by defining $f^{\mc{A}^{\msf{fun}}}(x_0, \dots, x_{m-1})$ to be the unique $y$ such that $G_f^{\mc{A}}(x_0, \dots, x_{m-1}, y)$ for every $m$\nobreakdash-ary function symbol $f \in \mf{L}$ and every $x_0, \dots, x_{m-1} \in |\mc{A}|$.  If $\mc{A}$ and $\mc{B}$ are isomorphic $\mf{L}$\nobreakdash-structures, then the isomorphism is also an isomorphism between $\mc{A}^{\msf{rel}}$ and $\mc{B}^{\msf{rel}}$ as $\mf{L}^{\msf{rel}}$\nobreakdash-structures.  Conversely, if $\mc{A}$ and $\mc{B}$ are isomorphic $\mf{L}^{\msf{rel}}$\nobreakdash-structures such that $G_f$ is the graph of a function in both structures for every function symbol $f \in \mf{L}$, then the isomorphism is also an isomorphism between $\mc{A}^{\msf{fun}}$ and $\mc{B}^{\msf{fun}}$ as $\mf{L}$\nobreakdash-structures.

If $\mf{L}$ is a computable language and $\mc{A}$ is a computable $\mf{L}$\nobreakdash-structure, then $\mf{L}^{\msf{rel}}$ is a computable language and $\mc{A}^{\msf{rel}}$ is a computable $\mf{L}^{\msf{rel}}$\nobreakdash-structure.  Let $\mf{L}$ be a computable language, let $\mc{A}$ and $\mc{B}$ be computably isomorphic $\mf{L}$\nobreakdash-structures, and let $C$ be a cohesive set.  Then $\mc{A}^{\msf{rel}}$ and $\mc{B}^{\msf{rel}}$ are computably isomorphic computable $\mc{L}^{\msf{rel}}$\nobreakdash-structures, so $\prod_C \mc{A}^{\msf{rel}} \iso \prod_C \mc{B}^{\msf{rel}}$ as $\mf{L}^{\msf{rel}}$\nobreakdash-structures.  For each function symbol $f \in \mf{L}$, $G_f$ is the graph of a function in $\mc{A}^{\msf{rel}}$ and in $\mc{B}^{\msf{rel}}$.  Therefore $G_f$ is the graph of a function in $\prod_C \mc{A}^{\msf{rel}}$ and in $\prod_C \mc{B}^{\msf{rel}}$ by Theorem~\ref{thm-LosGeneral} item~\ref{it-LosDelta3Sent} because the statement ``$G_f$ is the graph of a function'' is expressible by a $\Pi_2$ $\mf{L}^{\msf{rel}}$\nobreakdash-sentence.  Therefore $(\prod_C \mc{A}^{\msf{rel}})^{\msf{fun}} \,\iso\, (\prod_C \mc{B}^{\msf{rel}})^{\msf{fun}}$ as $\mf{L}$\nobreakdash-structures.  It is straightforward to check that $\prod_C \mc{A}^{\msf{rel}} = (\prod_C \mc{A})^{\msf{rel}}$ and therefore that $(\prod_C \mc{A}^{\msf{rel}})^{\msf{fun}} = \prod_C \mc{A}$; and similarly for $\mc{B}$.  Thus $\prod_C \mc{A} \iso \prod_C \mc{B}$ as $\mf{L}$\nobreakdash-structures, as desired.
\end{proof}

Recall that a computable structure $\mc{A}$ is called \emph{computably categorical} if every computable structure that is isomorphic to $\mc{A}$ is isomorphic to $\mc{A}$ via a computable isomorphism.  It follows from Theorem~\ref{thm-IsoCohPow} that if $\mc{A}$ is a computably categorical computable structure and $C$ is cohesive, then $\prod_C \mc{A} \iso \prod_C \mc{B}$ whenever $\mc{B}$ is a computable structure isomorphic to $\mc{A}$.

\begin{Corollary}
Let $\mf{L}$ be a computable language, let $\mc{A}$ be a computably categorical computable $\mf{L}$\nobreakdash-structure, let $\mc{B}$ be a computable $\mf{L}$\nobreakdash-structure that is isomorphic to $\mc{A}$, and let $C$ be cohesive.  Then $\prod_C \mc{A} \iso \prod_C \mc{B}$.
\end{Corollary}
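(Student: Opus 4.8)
The plan is to reduce the statement immediately to Theorem~\ref{thm-IsoCohPow}; the only content beyond that theorem is the observation that computable categoricity upgrades ``isomorphic'' to ``computably isomorphic.''

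First I would unwind the definition of computable categoricity. Since $\mc{A}$ is computably categorical and $\mc{B}$ is a computable structure isomorphic to $\mc{A}$, there is a computable isomorphism between $\mc{A}$ and $\mc{B}$. (The relation of being computably isomorphic is symmetric, since the inverse of a computable bijection between computable domains is again computable — this was already used in the proof of Theorem~\ref{thm-IsoCohPow} — so it does not matter whether one phrases the witness as a map $\mc{A} \imp \mc{B}$ or $\mc{B} \imp \mc{A}$.) Hence $\mc{A}$ and $\mc{B}$ are computably isomorphic computable $\mf{L}$-structures.

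Then I would simply apply Theorem~\ref{thm-IsoCohPow} to $\mc{A}$, $\mc{B}$, and the cohesive set $C$, which yields $\Pi_C \mc{A} \iso \Pi_C \mc{B}$ directly. I do not expect any genuine obstacle here: the corollary is essentially a restatement of the remark preceding it, and the only point requiring even minimal care is confirming that the hypothesis of Theorem~\ref{thm-IsoCohPow} (that the two structures are computably isomorphic) is exactly what computable categoricity supplies, which is handled in the first step.
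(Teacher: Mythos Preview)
Your proposal is correct and matches the paper's approach exactly: the paper derives the corollary in a single sentence, noting that computable categoricity yields a computable isomorphism between $\mc{A}$ and $\mc{B}$, after which Theorem~\ref{thm-IsoCohPow} applies directly.
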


In Theorem~\ref{thm-IsoCohPow}, it is essential that the two structures are isomorphic via a computable isomorphism.  In Sections~\ref{sec-PowOfOmega},~\ref{sec-DenseNonstd},~and~\ref{sec-Shuffle}, we see many examples of pairs of computable linear orders that are isomorphic (but not computably isomorphic) to $\omega$ but have non-elementarily equivalent cohesive powers.

The next theorem says that decidable structures $\mc{A}$ and $\mc{B}$ are elementarily equivalent if and only if $\prod_C \mc{A}$ and $\prod_C \mc{B}$ are isomorphic for every cohesive set $C$.  It is essentially~\cite{Nelson}*{Theorem~2.1}, though we give a slightly different proof.  Compare this to the Keisler--Shelah theorem, which states that, in general, two structures are elementarily equivalent if and only if there is an ultrafilter (on a set of appropriate size) over which the corresponding ultrapowers are isomorphic (see~\cite{ChangKeislerBook}*{Theorem~6.1.15}).

\begin{Theorem}\label{thm-EffectiveKS}
Let $\mf{L}$ be a computable language, and let $\mc{A}$ and $\mc{B}$ be decidable $\mf{L}$\nobreakdash-structures.  Then $\mc{A} \equiv \mc{B}$ if and only if for every cohesive set $C$, $\prod_C \mc{A} \iso \prod_C \mc{B}$.
\end{Theorem}

\begin{proof}
In general, say that two structures $\mc{M}$ and $\mc{N}$ have the same types (without parameters) if for every sequence $\vec{a} = a_0, \dots, a_{m-1}$ of elements of $|\mc{M}|$, there is a corresponding sequence $\vec{b} = b_0, \dots, b_{m-1}$ of elements of $|\mc{N}|$ such that for every formula $\Phi(x_0, \dots, x_{m-1})$ with $m$ free variables,
\begin{align*}
\mc{M} \models \Phi(a_0, \dots, a_{m-1}) \;\Biimp\; \mc{N} \models \Phi(b_0, \dots, b_{m-1}),
\end{align*}
and similarly with the roles of $\mc{M}$ and $\mc{N}$ reversed.  Now recall~\cite{KayeBook}*{Corollary~15.15}, which states that if $\mc{M}$ and $\mc{N}$ are countable recursively saturated $\mf{L}$\nobreakdash-structures, then $\mc{M} \iso \mc{N}$ if and only if $\mc{M}$ and $\mc{N}$ are elementarily equivalent and have the same types.

For the forward direction, let $\mc{A}$ and $\mc{B}$ be decidable $\mf{L}$\nobreakdash-structures that are elementarily equivalent, and let $C$ be cohesive.  Then $\prod_C \mc{A}$ and $\prod_C \mc{B}$ are countable, are elementarily equivalent by Corollary~\ref{cor-DecLos} (which yields that $\prod_C \mc{A} \equiv \mc{A} \equiv \mc{B} \equiv \prod_C \mc{B}$), and are recursively saturated by Theorem~\ref{thm-SatGen}.  Thus to conclude that $\prod_C \mc{A} \iso \prod_C \mc{B}$, it suffices to show that $\prod_C \mc{A}$ and $\prod_C \mc{B}$ have the same types.

We show that for every $[\varphi_0], \dots, [\varphi_{m-1}] \in |\prod_C \mc{A}|$, there are $[\psi_0], \dots, [\psi_{m-1}] \in |\prod_C \mc{B}|$ with the same type.  A symmetric argument shows that the same holds with the roles of $\mc{A}$ and $\mc{B}$ reversed.  As in the proofs of Lemmas~\ref{lem-SatGen} and~\ref{lem-SatCoCe}, let $\varphi \colon \Nb \imp \Nb^m$ be the partial computable function $\varphi(n) \keq \la \varphi_0(n), \dots, \varphi_{m-1}(n) \ra$, and let $\ora{[\varphi]}$ denote $[\varphi_0], \dots, [\varphi_{m-1}]$.  Note that $C \subseteq^* \dom(\varphi)$.  Let $(\Phi_i(\vec{x}) : i \in \Nb)$ be a computable enumeration of all formulas with $m$ free variables.

Define a partial computable function $\psi \colon \Nb \imp \Nb^m$ as follows.   If $\varphi(n)\da$, then for each $i \leq n$, use the decidability of $\mc{A}$ to determine whether $\mc{A} \models \Phi_i(\varphi(n))$.  If $\mc{A} \models \Phi_i(\varphi(n))$, let $\Theta_i = \Phi_i$; and if $\mc{A} \nmodels \Phi_i(\varphi(n))$, let $\Theta_i = \neg\Phi_i$.  Then $\varphi(n)$ witnesses that $\mc{A} \models \exists \vec{x}\, \bigwedge_{i \leq n} \Theta_i (\vec{x})$, so $\mc{B} \models \exists \vec{x}\, \bigwedge_{i \leq n} \Theta_i (\vec{x})$ because $\mc{B} \equiv \mc{A}$.  By the decidability of $\mc{B}$, search for the first $\vec{b}$ such that $\mc{B} \models \bigwedge_{i \leq n} \Theta_i (\vec{b})$, and define $\psi(n) = \vec{b}$.  On the other hand, if $\varphi(n)\ua$, then $\psi(n)\ua$.

Consider the formula $\Phi_i$, and suppose that $\prod_C \mc{A} \models \Phi_i \left(\ora{[\varphi]}\right)$.  Then $C \subseteq^* \{n : \mc{A} \models \Phi_i(\varphi(n))\}$ by Corollary~\ref{cor-DecLos}.  Thus for sufficiently large $n \geq i$ with $n \in C$, we have that $\Theta_i = \Phi_i$ in the computation of $\psi(n)$, and therefore $\psi(n)$ is defined so that $\mc{B} \models \Phi_i(\psi(n))$.  Thus $C \subseteq^* \{n : \mc{B} \models \Phi_i(\psi(n))\}$.  Letting $\psi_j = \pi_j \circ \psi$ for each $j < m$ yields that $\prod_C \mc{B} \models \Phi_i \left(\ora{[\psi]}\right)$ by Corollary~\ref{cor-DecLos}.  If instead $\prod_C \mc{A} \models \neg\Phi_i \left(\ora{[\varphi]}\right)$, then $\prod_C \mc{B} \models \neg\Phi_i \left(\ora{[\psi]}\right)$ by a similar argument.  Thus $[\psi_0], \dots, [\psi_{m-1}]$ has in $\prod_C \mc{B}$ the same type that $[\varphi_0], \dots, [\varphi_{m-1}]$ has in $\prod_C \mc{A}$, as desired.

For the converse, let $\mc{A}$ and $\mc{B}$ be decidable $\mf{L}$\nobreakdash-structures, and suppose that $\prod_C \mc{A} \iso \prod_C \mc{B}$ for every cohesive set $C$.  Fix any cohesive set $C$.  Then $\mc{A} \equiv \prod_C \mc{A} \equiv \prod_C \mc{B} \equiv \mc{B}$ by Corollary~\ref{cor-DecLos}.
\end{proof}

Again, the decidability assumption in Theorem~\ref{thm-EffectiveKS} is essential, as we shall see examples of isomorphic (and hence elementarily equivalent) computable linear orders having non-elementarily equivalent (and hence non-isomorphic) cohesive powers.  We shall also see examples of non-elementarily equivalent computable linear orders having isomorphic cohesive powers.

\section{Linear orders and their cohesive powers}\label{sec-LOandPow}

We investigate the cohesive powers of computable linear orders, with special attention to computable linear orders of type $\omega$.  A \emph{linear order} $\mc{L} = (L, \prec)$ consists of a non-empty set $L$ equipped with a binary relation $\prec$ satisfying the following axioms.
\begin{itemize}
\item $\forall x \, (x \nprec x)$.
\item $\forall x \forall y \forall z \, [(x \prec y \andd y \prec z) \;\imp\; x \prec z]$.
\item $\forall x \forall y \, (x \prec y \,\orr\, x = y \,\orr\, y \prec x)$.
\end{itemize}
Additionally, a linear order $\mc{L}$ is \emph{dense} if $\forall x \forall y \exists z \, (x \prec y \,\imp\, x \prec z \prec y)$ and \emph{has no endpoints} if $\forall x \exists y \exists z \, (y \prec x \prec z)$.  Rosenstein's book~\cite{RosBook} is an excellent reference for linear orders.

For a linear order $\mc{L} = (L, \prec)$, we use the usual interval notation $(a,b)_\mc{L} = \{x \in L : a \prec x \prec b\}$ and $[a,b]_\mc{L} = \{x \in L : a \preceq x \preceq b\}$ to denote open and closed intervals of $\mc{L}$.  Sometimes it is convenient to allow $b \preceq a$ in this notation, in which case, for example, $(a,b)_\mc{L} = \emptyset$.  The notation $|(a,b)_\mc{L}|$ denotes the cardinality of the interval $(a,b)_\mc{L}$.  The notations $\min_\prec \{a, b\}$ and $\max_\prec \{a, b\}$ denote the minimum and maximum of $a$ and $b$ with respect to $\prec$.  

As is customary, $\omega$ denotes the order-type of $(\Nb, <)$, $\zeta$ denotes the order-type of $(\Zb, <)$, and $\eta$ denotes the order-type of $(\Qb, <)$.  That is, $\omega$, $\zeta$, and $\eta$ denote the respective order-types of the natural numbers, the integers, and the rationals, each with their usual order.  We refer to $(\Nb, <)$, $(\Zb, <)$, and $(\Qb, <)$ as the \emph{usual presentations} of $\omega$, $\zeta$, and $\eta$, respectively.  Recall that every countable dense linear order without endpoints has order-type $\eta$ (see~\cite{RosBook}*{Theorem~2.8}).  Furthermore, every computable countable dense linear order without endpoints is computably isomorphic to $(\Qb, <)$ (see~\cite{RosBook}*{Exercise~16.4}).

To help reason about order-types, we use the \emph{sum}, \emph{product}, and \emph{reverse} of linear orders as well as \emph{condensations} of linear orders.

\begin{Definition}\label{def-SumProdRev}
Let $\mc{L}_0 = (L_0, \prec_{\mc{L}_0})$ and $\mc{L}_1 = (L_0, \prec_{\mc{L}_1})$ be linear orders.
\begin{itemize}
\item The \emph{sum} $\mc{L}_0 + \mc{L}_1$ of $\mc{L}_0$ and $\mc{L}_1$ is the linear order $\mc{S} = (S, \prec_{\mc{S}})$, where $S = (\{0\} \times L_0) \cup (\{1\} \times L_1)$ and
\begin{align*}
(i, x) \prec_{\mc{S}} (j, y) \quad\text{if and only if}\quad (i < j) \;\orr\; (i = j \,\andd\, x \prec_{\mc{L}_i} y).
\end{align*}

\medskip

\item The \emph{product} $\mc{L}_0 \mc{L}_1$ of $\mc{L}_0$ and $\mc{L}_1$ is the linear order $\mc{P} = (P, \prec_{\mc{P}})$, where $P = L_1 \times L_0$ and
\begin{align*}
(x, a) \prec_{\mc{P}} (y, b) \quad\text{if and only if}\quad (x \prec_{\mc{L}_1} y) \;\orr\; (x = y \,\andd\, a \prec_{\mc{L}_0} b).
\end{align*}
Note that, by (fairly entrenched) convention, $\mc{L}_0 \mc{L}_1$ is given by the product order on $L_1 \times L_0$, not on $L_0 \times L_1$.

\medskip

\item The \emph{reverse} $\mc{L}_0^*$ of $\mc{L}_0$ is the linear order $\mc{R} = (R, \prec_{\mc{R}})$, where $R = L_0$ and $x \prec_{\mc{R}} y$ if and only if $y \prec_{\mc{L}_0} x$.  (We warn the reader that the $*$ in the notation $\mc{L}_0^*$ is unrelated to the $*$ in the notation $X \subseteq^* Y$.)
\end{itemize}
\end{Definition}

If $\mc{L}_0$ and $\mc{L}_1$ are computable linear orders, then one may use the pairing function to compute copies of $\mc{L}_0 + \mc{L}_1$ and $\mc{L}_0 \mc{L}_1$.  Clearly, if $\mc{L}$ is a computable linear order, then so is $\mc{L}^*$. 

\begin{Definition}
Let $\mc{L} = (L, \prec_\mc{L})$ be a linear order.  A \emph{condensation} of $\mc{L}$ is any linear order $\mc{M} = (M, \prec_\mc{M})$ obtained by partitioning $L$ into a collection $M$ of non-empty intervals and, for intervals $I, J \in M$, defining $I \prec_\mc{M} J$ if and only if $(\forall a \in I)(\forall b \in J)(a \prec_\mc{L} b)$.
\end{Definition}

The most important condensation is the \emph{finite condensation}.

\begin{Definition}\label{def-FinCond}
Let $\mc{L} = (L, \prec_\mc{L})$ be a linear order.  For $x \in L$, let $\condF(x)$ denote the set of $y \in L$ for which there are only finitely many elements between $x$ and $y$:
\begin{align*}
\condF(x) = \Bigl\{y \in L : \text{the interval $\bigl[\min\nolimits_{\prec_\mc{L}}\{x,y\}, \max\nolimits_{\prec_\mc{L}}\{x,y\}\bigr]_{\mc{L}}$ in $\mc{L}$ is finite} \Bigl\}.
\end{align*}
The set $\condF(x)$ is always a non-empty interval because $x \in \condF(x)$.  The \emph{finite condensation} $\condF(\mc{L})$ of $\mc{L}$ is the condensation obtained from the partition $\{\condF(x) : x \in L\}$.
\end{Definition}

For example, $\condF(\omega) \iso \bm{1}$, $\condF(\zeta) \iso \bm{1}$, $\condF(\eta) \iso \eta$, and $\condF(\omega + \zeta\eta) \iso \bm{1} + \eta$.  Notice that for an element $x$ of a linear order $\mc{L}$, the order-type of $\condF(x)$ is always either finite, $\omega$, $\omega^*$, or $\zeta$.

We often refer to the intervals that comprise a condensation of a linear order as \emph{blocks}.  For the finite condensation of a linear order $\mc{L}$, a block is a maximal interval $I$ such that for any two elements of $I$, there are only finitely many elements of $\mc{L}$ between them.  For elements $a$ and $b$ of $\mc{L}$, we write $a \pprec_\mc{L} b$ if the interval $(a, b)_\mc{L}$ (equivalently, the interval $[a, b]_\mc{L}$) in $\mc{L}$ is infinite.  For $a \prec_\mc{L} b$, we have that $a \pprec_\mc{L} b$ if and only if $a$ and $b$ are in different blocks.  See~\cite{RosBook}*{Chapter~4} for more on condensations.

Let $C$ be a cohesive set.  It follows from Theorem~\ref{thm-LosProdParam} that if $(\mc{L}_n : n \in \Nb)$ is a uniformly computable sequence of linear orders, then $\prod_C \mc{L}_n$ is again a linear order because linear orders are axiomatized by $\Pi_1$ sentences.  Likewise, if $(\mc{L}_n : n \in \Nb)$ is a uniformly computable sequence of dense linear orders without endpoints, then $\prod_C \mc{L}_n$ is again a dense linear order without endpoints because dense linear orders without endpoints are axiomatized by $\Pi_2$ sentences.  In particular, if $\mc{L}$ is a computable linear order, then $\prod_C \mc{L}$ is a linear order; and if $\mc{L}$ is a computable dense linear order without endpoints, then $\prod_C \mc{L}$ is a dense linear order without endpoints.

The case of $\Qb = (\Qb, <)$ is curious and deserves a digression.  We have seen that if $\mc{A}$ is a finite structure, then $\mc{A} \iso \prod_C \mc{A}$ for every cohesive set $C$.  For $\Qb$, $\prod_C \Qb$ is a countable dense linear order without endpoints, and hence isomorphic to $\Qb$, for every cohesive set $C$.  Thus $\Qb$ is an example of an infinite computable structure with $\Qb \iso \prod_C \Qb$ for every cohesive set $C$.  That $\Qb$ is isomorphic to all of its cohesive powers is no accident.  By combining Theorem~\ref{thm-LosGeneral} with the theory of \emph{Fra\"{i}ss\'{e} limits} (see~\cite{HodgesBook}*{Chapter~6}, for example), we see that a uniformly locally finite ultrahomogeneous computable structure for a finite language is always isomorphic to all of its cohesive powers.  Recall that a structure is \emph{locally finite} if every finitely-generated substructure is finite and is \emph{uniformly locally finite} if there is a function $f \colon \Nb \imp \Nb$ such that every substructure generated by at most $n$ elements has cardinality at most $f(n)$.  Notice that every structure for a finite relational language is uniformly locally finite.  Also recall that a structure is \emph{ultrahomogeneous} if every isomorphism between two finitely-generated substructures extends to an automorphism of the whole structure.

\begin{Proposition}\label{prop-UltrahomIso}
Let $\mc{A}$ be an infinite uniformly locally finite ultrahomogeneous computable structure for a finite language, and let $C$ be cohesive.  Then $\mc{A} \iso \prod_C \mc{A}$.
\end{Proposition}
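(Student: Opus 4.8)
The plan is to combine the restricted transfer theorem for $\Pi_2$ sentences (Theorem~\ref{thm-FundCohPow}(3)) with the basic model theory of Fra\"{i}ss\'{e} limits. Write $\mathbf{K}$ for the age of $\mc{A}$, i.e.\ the class of finite structures embeddable in $\mc{A}$. Since $\mc{A}$ is infinite, uniformly locally finite, ultrahomogeneous, and in a finite language, $\mc{A}$ is, up to isomorphism, the Fra\"{i}ss\'{e} limit of $\mathbf{K}$. Let $T = \mathrm{Th}(\mc{A})$. I would argue two things: (i) $\Pi_C \mc{A} \models T$, and (ii) $T$ is $\aleph_0$-categorical. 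Since $\Pi_C \mc{A}$ is countably infinite (as $\mc{A}$ is infinite and computable, by the discussion following Theorem~\ref{thm-FundCohPow}), (i) and (ii) together give $\Pi_C \mc{A} \iso \mc{A}$, which is exactly the Proposition.

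For (ii) and the heart of (i), I would recall from the theory of Fra\"{i}ss\'{e} limits (see~\cite{HodgesBook}*{Chapter~6}) that, because $\mathbf{K}$ is uniformly locally finite and the language is finite, $T$ is $\aleph_0$-categorical and is axiomatized by a set of $\Pi_2$ sentences. Concretely, these axioms are of two kinds: the \emph{$\mathbf{K}$-membership axioms}, which for each $m$ assert that every $m$-element tuple generates a substructure isomorphic to one of the finitely many members of $\mathbf{K}$ of size at most $f(m)$, where $f$ witnesses uniform local finiteness; and the \emph{one-point extension axioms}, which for each finite $B$ a substructure of $B' \in \mathbf{K}$ with $B'$ generated by $B$ together with one extra element assert that every embedding of $B$ extends to an embedding of $B'$. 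Uniform local finiteness supplies, for each $m$, a finite set of terms that suffices to generate the substructure spanned by any $m$-tuple, so ``the tuple $\bar x$ generates a copy of $B$'' is expressible by a quantifier-free formula $\theta_B(\bar x)$; hence the $\mathbf{K}$-membership axioms are $\Pi_1$ and the one-point extension axioms have the shape $\forall \bar x\,(\theta_B(\bar x) \to \exists y\, \theta_{B'}(\bar x, y))$, which is $\Pi_2$. Now $\mc{A} \models T$, so $\mc{A}$ satisfies each of these $\Pi_2$ axioms; by Theorem~\ref{thm-FundCohPow}(3), $\Pi_C \mc{A}$ satisfies each of them too, whence $\Pi_C \mc{A} \models T$. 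Combined with (ii) and the countable infinitude of $\Pi_C \mc{A}$, this yields $\Pi_C \mc{A} \iso \mc{A}$.

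The main obstacle is the bookkeeping behind the claim that $T$ is $\Pi_2$-axiomatizable --- specifically, turning ``$\bar x$ generates a copy of $B$'' into a genuine quantifier-free formula when the language has function symbols, which is precisely where uniform local finiteness is invoked to bound the depth of the relevant terms uniformly in the length of $\bar x$. In the relational case, which already covers the motivating example $\mc{A} = \Qb$ from the preceding discussion, this step is immediate, as $\theta_B(\bar x)$ is then literally the quantifier-free diagram of $B$ on $\bar x$. As a more self-contained alternative that sidesteps citing $\aleph_0$-categoricity, one could instead verify directly that $\mathrm{Age}(\Pi_C \mc{A}) = \mathbf{K}$ and that $\Pi_C \mc{A}$ is ultrahomogeneous, and then invoke uniqueness of the Fra\"{i}ss\'{e} limit: here one uses Theorem~\ref{thm-FundCohPow}(1),(2) together with the cohesiveness of $C$ to pass between ``$(\varphi_0(x), \dots, \varphi_{k-1}(x))$ generates a substructure of a given isomorphism type for almost all $x \in C$'' and ``$([\varphi_0], \dots, [\varphi_{k-1}])$ generates a substructure of that type in $\Pi_C \mc{A}$'' --- the point being that for each of the finitely many candidate isomorphism types the set of corresponding $x$ is c.e., so cohesiveness selects exactly one --- and the extension property of $\Pi_C \mc{A}$ is then obtained by computably searching, for each $x$, for a witness in $\mc{A}$ guaranteed by the extension property of $\mc{A}$ itself.
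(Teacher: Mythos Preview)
Your proposal is correct and follows essentially the same approach as the paper: recognize $\mc{A}$ as the Fra\"{i}ss\'{e} limit of its age, invoke Hodges to obtain that $\mathrm{Th}(\mc{A})$ is $\aleph_0$-categorical and $\Pi_2$-axiomatizable, apply Theorem~\ref{thm-FundCohPow}(3) to transfer the $\Pi_2$ axioms to $\Pi_C \mc{A}$, and conclude by countability. The paper's version is terser (citing \cite{HodgesBook}*{Theorem~6.4.1} directly rather than spelling out the membership and extension axioms), but the argument is the same; your alternative direct verification via Theorem~\ref{thm-FundCohPow}(1),(2) is a nice self-contained option the paper does not pursue.
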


\begin{proof}
The structure $\mc{A}$ is ultrahomogeneous, so it is the Fra\"{i}ss\'{e} limit of its \emph{age} (i.e., the class of all finitely-generated structures embeddable into $\mc{A}$).  By~\cite{HodgesBook}*{Theorem~6.4.1} and its proof, the first-order theory of $\mc{A}$ is $\aleph_0$\nobreakdash-categorical and is axiomatized by a set $T$ of $\Pi_2$ sentences.  Thus if $\mc{B}$ is any countable model of $T$, then $\mc{A} \iso \mc{B}$.  We have that $\prod_C \mc{A} \models T$ by Theorem~\ref{thm-LosGeneral} item~\ref{it-LosDelta3Sent}, so $\mc{A} \iso \prod_C \mc{A}$.
\end{proof}

Proposition~\ref{prop-UltrahomIso} implies that if a uniformly locally finite computable structure for a finite language is a Fra\"{i}ss\'{e} limit, then it is isomorphic to all of its cohesive powers.  Thus computable presentations of the Rado graph and the countable atomless Boolean algebra are additional examples of computable structures that are isomorphic to all of their cohesive powers.  Examples of this phenomenon that cannot be attributed to ultrahomogeneity appear in Sections~\ref{sec-PowOfOmega} and~\ref{sec-DenseNonstd}.
 
Returning to linear orders, it is helpful to recall the following well-known lemma stating that a strictly order-preserving surjection from one linear order onto another is necessarily an isomorphism.
\begin{Lemma}\label{lem-LOIso}
Let $\mc{L} = (L, \prec_\mc{L})$ and $\mc{M} = (M, \prec_\mc{M})$ be linear orders.  If $f \colon L \imp M$ is surjective and satisfies $(\forall x, y \in L)(x \prec_\mc{L} y \;\imp\; f(x) \prec_\mc{M} f(y))$, then $f$ is an isomorphism.\qed
\end{Lemma}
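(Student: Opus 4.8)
The plan is to verify directly that $f$ is a bijection that both preserves and reflects the order $\prec$, which is precisely what it means to be an isomorphism of linear orders. Since $f$ is already assumed surjective, the only thing left to check for bijectivity is injectivity, and then one needs to show that $f^{-1}$ is also order-preserving.

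First I would establish injectivity. Suppose $x, y \in L$ with $x \neq y$. By the trichotomy axiom for $\mc{L}$, either $x \prec_\mc{L} y$ or $y \prec_\mc{L} x$; in the first case $f(x) \prec_\mc{M} f(y)$ and in the second case $f(y) \prec_\mc{M} f(x)$, and in either case irreflexivity of $\prec_\mc{M}$ forces $f(x) \neq f(y)$. Hence $f$ is injective, and combined with surjectivity, $f$ is a bijection.

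Next I would show that $f$ reflects the order, i.e.\ that $f(x) \prec_\mc{M} f(y)$ implies $x \prec_\mc{L} y$. Given $f(x) \prec_\mc{M} f(y)$, apply trichotomy to $x$ and $y$ in $\mc{L}$. If $x = y$ then $f(x) = f(y)$, contradicting irreflexivity of $\prec_\mc{M}$; if $y \prec_\mc{L} x$ then $f(y) \prec_\mc{M} f(x)$, which together with $f(x) \prec_\mc{M} f(y)$ contradicts the asymmetry axiom for $\prec_\mc{M}$. The only remaining possibility is $x \prec_\mc{L} y$. Therefore $f$ is a bijection satisfying $x \prec_\mc{L} y \Biimp f(x) \prec_\mc{M} f(y)$, so $f$ is an isomorphism.

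There is no real obstacle here: the argument is a short case analysis using only the linear-order axioms (irreflexivity, asymmetry, and trichotomy), and surjectivity is used solely to upgrade the injection to a bijection. The one point worth stating cleanly is that the hypothesis ``$f$ is order-preserving'' gives the forward implication only as a one-way arrow, and the bulk of the (tiny) work is extracting the reverse implication from trichotomy together with irreflexivity/asymmetry of $\prec_\mc{M}$.
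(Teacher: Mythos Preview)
Your proof is correct and essentially identical to the paper's: both establish injectivity via trichotomy and then show that $f$ reflects the order, the paper phrasing the latter as the contrapositive $x \nprec_\mc{L} y \Rightarrow f(x) \nprec_\mc{M} f(y)$ while you phrase it directly.
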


Cohesive powers commute with sums, products, and reverses.

\begin{Theorem}\label{thm-CohPres}
Let $\mc{L}_0$ and $\mc{L}_1$ be computable linear orders, and let $C$ be cohesive.  Then
\begin{enumerate}[(1)]
\item\label{it-SumIso} $\prod_C(\mc{L}_0 + \mc{L}_1) \iso \prod_C\mc{L}_0 + \prod_C\mc{L}_1$,

\smallskip

\item\label{it-ProdIso} $\prod_C(\mc{L}_0\mc{L}_1) \iso \bigl( \prod_C\mc{L}_0 \bigr) \bigl( \prod_C\mc{L}_1\bigr)$, and

\smallskip

\item\label{it-RevIso} $\prod_C(\mc{L}_0^*) \iso \bigl( \prod_C\mc{L}_0 \bigr)^*$.
\end{enumerate}
\end{Theorem}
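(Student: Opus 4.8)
The plan is, in each of the three parts, to exhibit an explicit surjective and strictly order-preserving map between the two linear orders in question and then to invoke the lemma above that any such map is an isomorphism; so the real content is just checking surjectivity and strict monotonicity, the latter via a short case analysis powered by the cohesiveness of $C$. Part~\ref{it-RevIso} needs almost nothing: $\Pi_C(\mc{L}_0^*)$ and $\bl\Pi_C\mc{L}_0\br^*$ have the very same underlying set, namely the $=_C$-classes of partial computable $\varphi\colon\Nb\imp L_0$ with $C\subseteq^*\dom(\varphi)$; and for such $\varphi$ and $\psi$, unwinding the definition of the order on a cohesive power (Definition~\ref{def-CohPow}) shows that $[\varphi]\prec[\psi]$ holds in $\Pi_C(\mc{L}_0^*)$ iff $C\subseteq^*\{x:\varphi(x)\da\andd\psi(x)\da\andd\psi(x)\prec_{\mc{L}_0}\varphi(x)\}$, equivalently iff $[\psi]\prec[\varphi]$ holds in $\Pi_C\mc{L}_0$, equivalently iff $[\varphi]\prec[\psi]$ holds in $\bl\Pi_C\mc{L}_0\br^*$; so the identity map is the desired isomorphism.

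For part~\ref{it-SumIso}, write $\mc{S}=\mc{L}_0+\mc{L}_1$, so the domain of the standard (pairing-function) copy of $\mc{S}$ is $(\{0\}\times L_0)\cup(\{1\}\times L_1)$. Given $[\varphi]\in|\Pi_C\mc{S}|$, the set of $x$ for which $\varphi(x)\da$ with first coordinate $0$ and the set of $x$ for which $\varphi(x)\da$ with first coordinate $1$ are c.e., disjoint, and together equal $\dom(\varphi)$, which contains $C$ up to a finite set; so by cohesiveness exactly one of them contains $C$ up to a finite set, and which one does is a property of $[\varphi]$ alone. In the first case set $F([\varphi])=(0,[\psi])$, where $\psi(x)$ runs $\varphi(x)$ and returns its second coordinate whenever $\varphi(x)\da$ with first coordinate $0$; in the second case set $F([\varphi])=(1,[\psi])$ analogously. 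Then $F\colon|\Pi_C\mc{S}|\imp|\Pi_C\mc{L}_0+\Pi_C\mc{L}_1|$ is well defined and surjective, as $(0,[\psi])$ is the image of the class of $x\mapsto(0,\psi(x))$ and likewise for tag $1$. For strict monotonicity, assume $[\varphi]\prec[\varphi']$ in $\Pi_C\mc{S}$; rewriting this as ``$\varphi(x)\prec_\mc{S}\varphi'(x)$ for all but finitely many $x\in C$'' and splitting into the four cases according to which halves of $\mc{S}$ the classes $[\varphi]$ and $[\varphi']$ land in, one reads off $F([\varphi])\prec F([\varphi'])$ in each case; the only remark needed is that the case ``$[\varphi]$ in the $\mc{L}_1$-half, $[\varphi']$ in the $\mc{L}_0$-half'' cannot occur, since it would force $\varphi'(x)\prec_\mc{S}\varphi(x)$ for all but finitely many $x\in C$.

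For part~\ref{it-ProdIso}, write $\mc{P}=\mc{L}_0\mc{L}_1$, so the domain of the standard copy of $\mc{P}$ is $L_1\times L_0$. Given $[\varphi]\in|\Pi_C\mc{P}|$, let $u$ and $v$ be the partial computable functions sending $x$ to the first ($L_1$-)coordinate and to the second ($L_0$-)coordinate of $\varphi(x)$; since $C\subseteq^*\dom(\varphi)=\dom(u)\cap\dom(v)$ we have $[u]\in|\Pi_C\mc{L}_1|$ and $[v]\in|\Pi_C\mc{L}_0|$, and we set $F([\varphi])=([u],[v])\in|\bl\Pi_C\mc{L}_0\br\bl\Pi_C\mc{L}_1\br|$. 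This $F$ is well defined and surjective, since $([u],[v])$ is the image of the class of $x\mapsto\la u(x),v(x)\ra$. For strict monotonicity, assume $[\varphi]\prec[\varphi']$ in $\Pi_C\mc{P}$, with coordinate functions $u,v$ and $u',v'$. The one genuine point is that $\{x:u(x)\da=u'(x)\da\}$ is c.e., so cohesiveness gives that either $[u]=[u']$ in $\Pi_C\mc{L}_1$ or else $u(x)\neq u'(x)$ for all but finitely many $x\in C$. In the former case, combining $u(x)=u'(x)$ with $\varphi(x)\prec_\mc{P}\varphi'(x)$ for almost all $x\in C$ yields $v(x)\prec_{\mc{L}_0}v'(x)$ for almost all $x\in C$, hence $[v]\prec[v']$ and $F([\varphi])\prec F([\varphi'])$; in the latter case, combining $u(x)\neq u'(x)$ with $\varphi(x)\prec_\mc{P}\varphi'(x)$ for almost all $x\in C$ yields $u(x)\prec_{\mc{L}_1}u'(x)$ for almost all $x\in C$, hence $[u]\prec[u']$ and again $F([\varphi])\prec F([\varphi'])$. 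The lemma then finishes all three parts.

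I do not foresee a serious obstacle: once the maps are written down, everything reduces to applying cohesiveness to a few naturally occurring c.e.\ sets and doing a finite case analysis. The only place where cohesiveness of $C$ (rather than mere infinitude) is essential is the dichotomy ``$[u]=[u']$, or $u$ and $u'$ eventually disagree on $C$'' in part~\ref{it-ProdIso}, and the analogous ``which half'' dichotomy in part~\ref{it-SumIso}; so if any step is error-prone, it is making sure those dichotomies are invoked correctly, in particular keeping track of the fact that different representatives of the same class can have different domains, all of which nonetheless contain $C$ up to a finite set.
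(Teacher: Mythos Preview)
Your proposal is correct and follows essentially the same approach as the paper: in each part you define the obvious map, check surjectivity, and check strict order-preservation via a cohesiveness-powered case split, then invoke the lemma that surjective strictly order-preserving maps between linear orders are isomorphisms. The only cosmetic difference is that for part~\ref{it-ProdIso} the paper splits directly on the disjunction defining $\prec_{\mc{L}_0\mc{L}_1}$ (either $\pi_0(\varphi(n))\prec_{\mc{L}_1}\pi_0(\psi(n))$ for almost all $n\in C$, or $\pi_0(\varphi(n))=\pi_0(\psi(n))$ and $\pi_1(\varphi(n))\prec_{\mc{L}_0}\pi_1(\psi(n))$ for almost all $n\in C$), whereas you phrase the dichotomy as ``$[u]=[u']$ or $u,u'$ eventually disagree on $C$''; these are equivalent and lead to the same conclusion.
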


\begin{proof}
We give proofs in the style of that of Theorem~\ref{thm-IsoCohPow}.  For alternate proofs, see~\cite{CohPowCiE}*{Theorem~6}.

For~\ref{it-SumIso}, let $\mc{M} = \mc{L}_0 + \mc{L}_1$.  Expand the disjoint union $\mc{L}_0 \sqcup \mc{L}_1 \sqcup \mc{M}$ by adding unary relation symbols $L_0$, $L_1$, and $M$ that are interpreted as $|\mc{L}_0|$, $|\mc{L}_1|$, and $|\mc{M}|$; and by adding a $2$\nobreakdash-ary relation symbol $R_f$ that is interpreted as the graph of the function $f \colon |\mc{L}_0 \sqcup \mc{L}_1| \imp |\mc{M}|$ inside $\mc{L}_0 \sqcup \mc{L}_1 \sqcup \mc{M}$ given by
\begin{align*}
f(\la i, x \ra) = \la 2, \la i, x \ra \ra
\end{align*}
for each $i < 2$.
In $\mc{L}_0 \sqcup \mc{L}_1 \sqcup \mc{M}$, the $\prec$ relation is a linear order when restricted to $|\mc{L}_0|$, $|\mc{L}_1|$, or $|\mc{M}|$.  Furthermore, $R_f$ is the graph of a function $f$ from $|\mc{L}_0 \sqcup \mc{L}_1|$ onto $|\mc{M}|$ with the following properties, which together witness that $\mc{M} \iso \mc{L}_0 + \mc{L}_1$ as linear orders.
\begin{itemize}
\item $\forall x \forall y\, [(L_0(x) \,\andd\, L_0(y) \,\andd\, x \prec y) \;\imp\; f(x) \prec f(y)]$.
\item $\forall x \forall y\, [(L_1(x) \,\andd\, L_1(y) \,\andd\, x \prec y) \;\imp\; f(x) \prec f(y)]$.
\item $\forall x \forall y\, [(L_0(x) \,\andd\, L_1(y)) \;\imp\; f(x) \prec f(y)]$.
\end{itemize}
All of the above is expressible by a $\Pi_2$ sentence.  By Proposition~\ref{prop-substr}, the substructures of $\prod_C(\mc{L}_0 \sqcup \mc{L}_1 \sqcup \mc{M})$ corresponding to $L_0$, $L_1$, and $M$ are isomorphic to $\prod_C \mc{L}_0$, $\prod_C \mc{L}_1$, and $\prod_C \mc{M}$.  By Theorem~\ref{thm-LosGeneral} item~\ref{it-LosDelta3Sent}, $\prod_C \mc{L}_0$, $\prod_C \mc{L}_1$, and $\prod_C \mc{M}$ are linear orders as $\{\prec\}$\nobreakdash-structures, and $R_f^{\prod_C(\mc{L}_0 \sqcup \mc{L}_1 \sqcup \mc{M})}$ yields a function from $|\prod_C \mc{L}_0 \sqcup \prod_C \mc{L}_1|$ onto $|\prod_C \mc{M}|$ witnessing that $\prod_C \mc{M} \iso \prod_C \mc{L}_0 + \prod_C \mc{L}_1$ as linear orders.  Thus $\prod_C(\mc{L}_0 + \mc{L}_1) \iso \prod_C\mc{L}_0 + \prod_C\mc{L}_1$.

For~\ref{it-ProdIso}, let $\mc{M} = \mc{L}_0 \mc{L}_1$.  Expand the disjoint union $\mc{L}_0 \sqcup \mc{L}_1 \sqcup \mc{M}$ by adding unary relation symbols $L_0$, $L_1$, and $M$ that are interpreted as $|\mc{L}_0|$, $|\mc{L}_1|$, and $|\mc{M}|$; and by adding a $3$\nobreakdash-ary relation symbol $R_f$ that is interpreted as the graph of the function $f \colon |\mc{L}_1| \times |\mc{L}_0| \imp |\mc{M}|$ inside $\mc{L}_0 \sqcup \mc{L}_1 \sqcup \mc{M}$ given by 
\begin{align*}
f(\la 1, x \ra, \la 0, a \ra) = \la 2, \la x, a \ra \ra.
\end{align*}
In $\mc{L}_0 \sqcup \mc{L}_1 \sqcup \mc{M}$, the $\prec$ relation is a linear order when restricted to $|\mc{L}_0|$, $|\mc{L}_1|$, or $|\mc{M}|$.  Furthermore, $R_f$ is the graph of a function $f$ from $|\mc{L}_1| \times |\mc{L}_0|$ onto $|\mc{M}|$ with the following property, which witnesses that $\mc{M} \iso \mc{L}_0 \mc{L}_1$ as linear orders.
\begin{align*}
\forall a \forall b \forall x \forall y \, \biggl[\Bigl(L_0(a) &\,\andd\, L_0(b) \,\andd\, L_1(x) \,\andd\, L_1(y)\Bigr) \;\imp\\
&\Bigl( f(x,a) \prec f(y,b) \;\;\biimp\;\; \bigl( x \prec y \;\orr\; (x = y \,\andd\, a \prec b) \bigr) \Bigr)\biggr].
\end{align*}
All of the above is expressible by a $\Pi_2$ sentence.  By Proposition~\ref{prop-substr}, the substructures of $\prod_C(\mc{L}_0 \sqcup \mc{L}_1 \sqcup \mc{M})$ corresponding to $L_0$, $L_1$, and $M$ are isomorphic to $\prod_C \mc{L}_0$, $\prod_C \mc{L}_1$, and $\prod_C \mc{M}$.  By Theorem~\ref{thm-LosGeneral} item~\ref{it-LosDelta3Sent}, $\prod_C \mc{L}_0$, $\prod_C \mc{L}_1$, and $\prod_C \mc{M}$ are linear orders as $\{\prec\}$\nobreakdash-structures, and $R_f^{\prod_C(\mc{L}_0 \sqcup \mc{L}_1 \sqcup \mc{M})}$ yields a function from $|\prod_C \mc{L}_1| \times |\prod_C \mc{L}_0|$ onto $|\prod_C \mc{M}|$ witnessing that $\prod_C \mc{M} \iso \bigl( \prod_C \mc{L}_0 \bigr) \bigl( \prod_C \mc{L}_1 \bigr)$ as linear orders.  Thus $\prod_C(\mc{L}_0 \mc{L}_1) \iso \bigl( \prod_C \mc{L}_0 \bigr) \bigl( \prod_C \mc{L}_1 \bigr)$.

For~\ref{it-RevIso}, let $\mc{M} = \mc{L}_0^*$.  Expand the disjoint union $\mc{L}_0 \sqcup \mc{M}$ by adding unary relation symbols $L_0$ and $M$ that are interpreted as $|\mc{L}_0|$ and $|\mc{M}|$; and by adding a $2$\nobreakdash-ary relation symbol $R_f$ that is interpreted as the graph of the function $f \colon |\mc{L}_0| \imp |\mc{M}|$ inside $\mc{L}_0 \sqcup \mc{M}$ given by $f(\la 0, x \ra) = \la 1, x \ra$.  In $\mc{L}_0 \sqcup \mc{M}$, the $\prec$ relation is a linear order when restricted to $|\mc{L}_0|$ or $|\mc{M}|$, and $R_f$ is the graph of a function $f$ from $|\mc{L}_0|$ onto $|\mc{M}|$ such that
\begin{align*}
\forall x \forall y \, [(L_0(x) \,\andd\, L_0(y) \,\andd\, x \prec y) \;\imp\; f(y) \prec f(x)],
\end{align*}
which witnesses that $\mc{M} \iso \mc{L}_0^*$ as linear orders.  All of the above is expressible by a $\Pi_2$ sentence.  By Proposition~\ref{prop-substr}, the substructures of $\prod_C(\mc{L}_0 \sqcup \mc{M})$ corresponding to $L_0$ and $M$ are isomorphic to $\prod_C \mc{L}_0$ and $\prod_C \mc{M}$.  By Theorem~\ref{thm-LosGeneral} item~\ref{it-LosDelta3Sent}, $\prod_C \mc{L}_0$ and $\prod_C \mc{M}$ are linear orders as $\{\prec\}$\nobreakdash-structures, and $R_f^{\prod_C(\mc{L}_0 \sqcup \mc{M})}$ yields a function from $|\prod_C \mc{L}_0|$ onto $|\prod_C \mc{M}|$ witnessing that $\prod_C \mc{M} \iso \bigl( \prod_C \mc{L}_0 \bigr)^*$ as linear orders.  Thus $\prod_C(\mc{L}_0^*) \iso \bigl( \prod_C \mc{L}_0 \bigr)^*$.
\end{proof}

Sections~\ref{sec-PowOfOmega},~\ref{sec-DenseNonstd}, and~\ref{sec-Shuffle} concern calculating the order-types of cohesive powers of computable copies of $\omega$.  To do this, we must be able to determine when one element of a cohesive power is an immediate successor or immediate predecessor of another, and we must be able to determine when two elements of a cohesive power are in different blocks of its finite condensation.

In a cohesive power $\prod_C \mc{L}$ of a computable linear order $\mc{L}$, $[\varphi]$ is the immediate successor of $[\psi]$ if and only if $\varphi(n)$ is the immediate successor of $\psi(n)$ for almost every $n \in C$.  Therefore also $[\psi]$ is the immediate predecessor of $[\varphi]$ if and only if $\psi(n)$ is the immediate predecessor of $\varphi(n)$ for almost every $n \in C$.

\begin{Lemma}\label{lem-ImmedSucc}
Let $(\mc{L}_n : n \in \Nb)$ be a uniformly computable sequence of linear orders, let $C$ be cohesive, and let $[\psi]$ and $[\varphi]$ be elements of $\prod_C \mc{L}_n$.  Then the following are equivalent.
\begin{enumerate}[(1)]
\item\label{it-ImmedSuccInPow} $[\varphi]$ is the $\prec_{\prod_C \mc{L}_n}$\nobreakdash-immediate successor of $[\psi]$.

\smallskip

\item\label{it-aeImmedSucc} $(\forae n \in C)(\text{$\varphi(n)$ is the $\prec_{\mc{L}_n}$\nobreakdash-immediate successor of $\psi(n)$})$.

\smallskip

\item\label{it-existsinfImmedSucc} $(\existsinf n \in C)(\text{$\varphi(n)$ is the $\prec_{\mc{L}_n}$\nobreakdash-immediate successor of $\psi(n)$})$.
\end{enumerate}
\end{Lemma}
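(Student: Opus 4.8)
The plan is to prove the cycle of implications $\ref{it-ImmedSuccInPow} \Imp \ref{it-existsinfImmedSucc} \Imp \ref{it-aeImmedSucc} \Imp \ref{it-ImmedSuccInPow}$. Write $S$ for the set of $n$ such that $\varphi(n)\da$, $\psi(n)\da$, and $\varphi(n)$ is the $\prec_\mc{L}$-immediate successor of $\psi(n)$, so that~\ref{it-aeImmedSucc} says $C \subseteq^* S$ and~\ref{it-existsinfImmedSucc} says $C \cap S$ is infinite; since $C \subseteq^* \dom(\varphi)$ and $C \subseteq^* \dom(\psi)$, the halting clauses in the definition of $S$ hold automatically for almost every $n \in C$. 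For the first two implications the only input beyond this is Theorem~\ref{thm-FundCohPow}(2), applied to the atomic formula $v_0 \prec v_1$ and to the quantifier-free formula $v_0 \prec v_1 \prec v_2$: for elements $[\alpha],[\beta]$ (resp.\ $[\alpha],[\beta],[\gamma]$) of $\Pi_C\mc{L}$, one has $[\alpha] \prec_{\Pi_C\mc{L}} [\beta]$ iff $(\forae n \in C)(\alpha(n) \prec_\mc{L} \beta(n))$, and $[\alpha] \prec_{\Pi_C\mc{L}} [\beta] \prec_{\Pi_C\mc{L}} [\gamma]$ iff $(\forae n \in C)(\alpha(n) \prec_\mc{L} \beta(n) \prec_\mc{L} \gamma(n))$.

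For $\ref{it-aeImmedSucc} \Imp \ref{it-ImmedSuccInPow}$: assuming $C \subseteq^* S$, we get $(\forae n \in C)(\psi(n) \prec_\mc{L} \varphi(n))$, hence $[\psi] \prec_{\Pi_C\mc{L}} [\varphi]$; and if some $[\theta]$ satisfied $[\psi] \prec_{\Pi_C\mc{L}} [\theta] \prec_{\Pi_C\mc{L}} [\varphi]$ then $(\forae n \in C)(\psi(n) \prec_\mc{L} \theta(n) \prec_\mc{L} \varphi(n))$, contradicting $C \subseteq^* S$ at any of the infinitely many $n \in C$ where both conditions hold. For $\ref{it-ImmedSuccInPow} \Imp \ref{it-existsinfImmedSucc}$: suppose $[\varphi]$ is the immediate successor of $[\psi]$ but $C \cap S$ is finite. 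From $[\psi] \prec_{\Pi_C\mc{L}} [\varphi]$ we get $(\forae n \in C)(\psi(n) \prec_\mc{L} \varphi(n))$, so, as $C \cap S$ is finite, for almost every $n \in C$ there is a $z \in |\mc{L}|$ with $\psi(n) \prec_\mc{L} z \prec_\mc{L} \varphi(n)$. Let $\theta$ be the partial computable function that on input $n$ computes $\psi(n)$ and $\varphi(n)$ and then outputs the numerically least $z \in |\mc{L}|$ with $\psi(n) \prec_\mc{L} z \prec_\mc{L} \varphi(n)$; then $C \subseteq^* \dom(\theta)$, so $[\theta] \in |\Pi_C\mc{L}|$, and $(\forae n \in C)(\psi(n) \prec_\mc{L} \theta(n) \prec_\mc{L} \varphi(n))$ gives $[\psi] \prec_{\Pi_C\mc{L}} [\theta] \prec_{\Pi_C\mc{L}} [\varphi]$, contradicting~\ref{it-ImmedSuccInPow}.

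The step I expect to be the main obstacle is $\ref{it-existsinfImmedSucc} \Imp \ref{it-aeImmedSucc}$, because $S$ is in general neither c.e.\ nor co-c.e., so cohesiveness cannot be applied to it directly; this is the only place where the argument is not a routine transfer of a first-order fact. The fix is to write $S = W \cap \ol{V}$ for the two c.e.\ sets $W = \{n : \varphi(n)\da \andd \psi(n)\da \andd \psi(n) \prec_\mc{L} \varphi(n)\}$ and $V = \{n : \varphi(n)\da \andd \psi(n)\da \andd (\exists z \in |\mc{L}|)(\psi(n) \prec_\mc{L} z \prec_\mc{L} \varphi(n))\}$. If $C \cap S$ is infinite, then $C \cap W$ and $C \cap \ol{V}$ are both infinite (each contains $C \cap S$), so cohesiveness yields $C \subseteq^* W$ and $C \subseteq^* \ol{V}$, hence $C \subseteq^* W \cap \ol{V} = S$, which is~\ref{it-aeImmedSucc}. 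The corresponding statement for immediate predecessors then follows by applying this lemma to $\mc{L}^*$ together with Theorem~\ref{thm-CohPres}\ref{it-RevIso}, or directly by the same argument.
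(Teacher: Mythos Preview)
Your proof is correct and follows essentially the same approach as the paper. The paper proves \ref{it-aeImmedSucc}~$\Biimp$~\ref{it-existsinfImmedSucc} first by observing that ``$\varphi(n)$ is the $\prec_\mc{L}$-immediate successor of $\psi(n)$'' is a $\Pi_1$ property of $\varphi(n),\psi(n)$ and then invoking cohesiveness; your decomposition $S = W \cap \ol{V}$ is exactly what makes that remark precise, since $\ol{V}$ captures the $\Pi_1$ clause and $W$ the $\Sigma_0$ clause. The remaining implications are argued the same way in both proofs, with the same construction of $\theta$ searching for a witness strictly between $\psi(n)$ and $\varphi(n)$.
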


\begin{proof}
That $x$ is the $\prec$\nobreakdash-immediate successor of $y$ is a $\Pi_1$ property of $x$ and $y$.  Therefore items~\ref{it-ImmedSuccInPow} and~\ref{it-aeImmedSucc} are equivalent by Theorem~\ref{thm-LosProdParam} item~\ref{it-LosProdPramDelta2}.  The set
\begin{align*}
\{n : \text{$\varphi(n)$ is the $\prec_{\mc{L}_n}$\nobreakdash-immediate successor of $\psi(n)$}\}
\end{align*}
is the intersection of a c.e.\ set and a co-c.e.\ set, so items~\ref{it-aeImmedSucc} and~\ref{it-existsinfImmedSucc} are equivalent by cohesiveness.
\end{proof}

\begin{Lemma}\label{lem-DifferentBlocks}
Let $(\mc{L}_n : n \in \Nb)$ be a uniformly computable sequence of linear orders, let $C$ be cohesive, and let $[\psi]$ and $[\varphi]$ be elements of $\prod_C \mc{L}_n$.  Then the following are equivalent.
\begin{enumerate}[(1)]
\item\label{it-FarBelow} $[\psi] \pprec_{\prod_C \mc{L}_n} [\varphi]$.

\smallskip

\item\label{it-BelowLim} $\lim_{n \in C}|(\psi(n), \varphi(n))_{\mc{L}_n}| = \infty$.

\smallskip

\item\label{it-BelowLimSup} $\limsup_{n \in C}|(\psi(n), \varphi(n))_{\mc{L}_n}| = \infty$.
\end{enumerate}
\end{Lemma}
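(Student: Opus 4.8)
The plan is to mimic the proof of Lemma~\ref{lem-ImmedSucc} and establish the cycle of implications \ref{it-FarBelow}~$\Imp$~\ref{it-BelowLimSup}~$\Imp$~\ref{it-BelowLim}~$\Imp$~\ref{it-FarBelow}. The workhorse will be the family of sets
\[
W_k = \bigl\{n : \varphi(n)\da \,\andd\, \psi(n)\da \,\andd\, |(\psi(n), \varphi(n))_\mc{L}| \geq k\bigr\} \qquad (k \in \Nb),
\]
each of which is c.e.: once $\varphi(n)$ and $\psi(n)$ have halted with values $a$ and $b$, having $|(a,b)_\mc{L}| \geq k$ is a $\Sigma_1$ condition, witnessed by finding $k$ distinct elements of $L$ strictly between $a$ and $b$. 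Note that $W_0 \supseteq W_1 \supseteq \cdots$ and that $C \subseteq^* W_0$, since $C \subseteq^* \dom(\varphi) \cap \dom(\psi)$.

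For \ref{it-BelowLimSup}~$\Imp$~\ref{it-BelowLim}, fix $k \in \Nb$; condition \ref{it-BelowLimSup} makes $C \cap W_k$ infinite, so cohesiveness forces $C \subseteq^* W_k$, and since $k$ was arbitrary this says $\liminf_{n \in C}|(\psi(n),\varphi(n))_\mc{L}| = \infty$, which is \ref{it-BelowLim}. (The converse \ref{it-BelowLim}~$\Imp$~\ref{it-BelowLimSup} is trivial, so \ref{it-BelowLim} and \ref{it-BelowLimSup} are already equivalent and it remains only to tie in \ref{it-FarBelow}.)

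For \ref{it-BelowLim}~$\Imp$~\ref{it-FarBelow}: applying \ref{it-BelowLim} with $k = 1$ gives $\psi(n) \prec_\mc{L} \varphi(n)$ for almost every $n \in C$, hence $[\psi] \prec_{\Pi_C \mc{L}} [\varphi]$. Suppose for contradiction that the interval $(\,[\psi], [\varphi]\,)_{\Pi_C \mc{L}}$ is finite, say with elements $[\rho_1] \prec_{\Pi_C \mc{L}} \cdots \prec_{\Pi_C \mc{L}} [\rho_\ell]$ (where $\ell = 0$ is allowed and means $[\varphi]$ is the immediate successor of $[\psi]$). Then each consecutive pair in the chain $[\psi], [\rho_1], \dots, [\rho_\ell], [\varphi]$ is an immediate-successor pair in $\Pi_C \mc{L}$, so applying Lemma~\ref{lem-ImmedSucc} to each of these $\ell+1$ pairs and intersecting the finitely many resulting co-finite-in-$C$ sets yields that for almost every $n \in C$ the chain $\psi(n) \prec_\mc{L} \rho_1(n) \prec_\mc{L} \cdots \prec_\mc{L} \rho_\ell(n) \prec_\mc{L} \varphi(n)$ consists of consecutive immediate successors in $\mc{L}$. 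Hence $|(\psi(n),\varphi(n))_\mc{L}| = \ell$ for almost every $n \in C$, contradicting \ref{it-BelowLim}.

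Finally, for \ref{it-FarBelow}~$\Imp$~\ref{it-BelowLimSup} I would argue the contrapositive. If \ref{it-BelowLimSup} fails, then $|(\psi(n),\varphi(n))_\mc{L}| \leq m$ for almost every $n \in C$ for some fixed $m$, so $C \cap W_{m+1}$ is finite; since $C \subseteq^* W_0$ and the $W_k$ are nested, there is a largest $j^* \leq m$ with $C \subseteq^* W_{j^*}$, and cohesiveness applied to $W_{j^*+1}$ then gives $C \subseteq^* \ol{W_{j^*+1}}$, so that $|(\psi(n),\varphi(n))_\mc{L}| = j^*$ for almost every $n \in C$. If $(\,[\psi], [\varphi]\,)_{\Pi_C \mc{L}}$ contained $j^* + 1$ pairwise distinct elements $[\theta_0] \prec_{\Pi_C \mc{L}} \cdots \prec_{\Pi_C \mc{L}} [\theta_{j^*}]$, then for almost every $n \in C$ the values $\theta_0(n), \dots, \theta_{j^*}(n)$ would be $j^*+1$ distinct members of $(\psi(n),\varphi(n))_\mc{L}$, which is impossible; so the interval is finite and \ref{it-FarBelow} fails. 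The one place requiring care is this last argument: one must use cohesiveness to collapse the a priori varying (but finite) interval sizes $|(\psi(n),\varphi(n))_\mc{L}|$ to the single value $j^*$ along $C$ before the counting argument can be run. Everything else is routine given Lemma~\ref{lem-ImmedSucc}.
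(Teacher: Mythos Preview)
Your proof is correct, but the route differs from the paper's in one substantive place. Both you and the paper handle the easy counting direction the same way (your contrapositive \ref{it-FarBelow}~$\Imp$~\ref{it-BelowLimSup} is essentially the paper's direct \ref{it-FarBelow}~$\Imp$~\ref{it-BelowLim}: if there are $k$ distinct elements between $[\psi]$ and $[\varphi]$, then for almost every $n\in C$ there are $k$ points between $\psi(n)$ and $\varphi(n)$). The real difference is in closing the cycle. You prove \ref{it-BelowLim}~$\Imp$~\ref{it-FarBelow} by invoking Lemma~\ref{lem-ImmedSucc} on a putative finite chain of immediate successors; the paper instead proves \ref{it-BelowLimSup}~$\Imp$~\ref{it-FarBelow} directly and constructively, by defining, for any finite list $[\theta_0],\dots,[\theta_{k-1}]$, a partial computable $\wh\theta$ that on input $n$ searches for an element of $(\psi(n),\varphi(n))_\mc{L}$ distinct from all $\theta_i(n)$, and then using cohesiveness to see that this search succeeds for almost every $n\in C$. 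Your approach is more economical in reusing Lemma~\ref{lem-ImmedSucc}; the paper's is self-contained and gives a concrete recipe for producing new elements of the interval, which is a pattern reused later (e.g.\ in Theorem~\ref{thm-DenseBlocks}).

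One minor comment: in your \ref{it-FarBelow}~$\Imp$~\ref{it-BelowLimSup} argument, the collapse to a single value $j^*$ is unnecessary. Once you have $|(\psi(n),\varphi(n))_\mc{L}|\le m$ for almost every $n\in C$, the counting argument already runs with $m+1$ in place of $j^*+1$; you do not need cohesiveness here at all. It is not wrong, just extra work.
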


\begin{proof}
We show that the following are equivalent for each fixed $k \in \Nb$.
\begin{enumerate}[(i)]
\item\label{it-AtLeastKInPow} $|([\psi], [\varphi])_{\prod_C \mc{L}_n}| \geq k$.

\smallskip

\item\label{it-aeAtLeastK} $(\forae n \in C)(|(\psi(n), \varphi(n))_{\mc{L}_n}| \geq k)$.

\smallskip

\item\label{it-existsinfAtLeastK} $(\existsinf n \in C)(|(\psi(n), \varphi(n))_{\mc{L}_n}| \geq k)$.
\end{enumerate}

That an interval $(x,y)$ in a linear order contains at least $k$ distinct elements for a fixed $k$ is a $\Sigma_1$ property of $x$ and $y$.  Therefore items~\ref{it-AtLeastKInPow} and~\ref{it-aeAtLeastK} are equivalent by Theorem~\ref{thm-LosProdParam} item~\ref{it-LosProdPramDelta2}.  The set $\{n : |(\psi(n), \varphi(n))_{\mc{L}_n}| \geq k\}$ is c.e., so items~\ref{it-aeAtLeastK} and~\ref{it-existsinfAtLeastK} are equivalent by cohesiveness.

It now follows that items~\ref{it-FarBelow}--\ref{it-BelowLimSup} are equivalent.  Item~\ref{it-FarBelow} holds if and only if item~\ref{it-AtLeastKInPow} holds for every $k$; item~\ref{it-BelowLim} holds if and only if item~\ref{it-aeAtLeastK} holds for every $k$; and item~\ref{it-BelowLimSup} holds if and only if item~\ref{it-existsinfAtLeastK} holds for every $k$.
\end{proof}

The finite condensation of a cohesive product of computable linear orders by a co-c.e.\ cohesive set is always dense.

\begin{Theorem}\label{thm-DenseBlocks}
Let $(\mc{L}_n : n \in \Nb)$ be a uniformly computable sequence of linear orders, and let $C$ be a cohesive set.  If either $(\mc{L}_n : n \in \Nb)$ is uniformly $1$\nobreakdash-decidable or $C$ is co-c.e., then $\condF(\prod_C \mc{L}_n)$ is dense.
\end{Theorem}

\begin{proof}
The cohesive product $\prod_C \mc{L}_n$ is $\Sigma_1$\nobreakdash-recursively saturated by Theorem~\ref{thm-SatGen} item~\ref{it-nDecRecSatSeq} in the uniformly $1$\nobreakdash-decidable case and by Corollary~\ref{cor-SatCoCe} item~\ref{it-Sigma1RecSatSeqCoCe} in the co-c.e.\ case.  Thus it suffices to show that if $\mc{M} = (M, \prec_\mc{M})$ is a $\Sigma_1$\nobreakdash-recursively saturated linear order, then $\condF(\mc{M})$ is dense.  To see this, let $a, b \in M$ be such that $a \pprec_\mc{M} b$.  For each $k \in \Nb$, let $\Phi_k(x; a, b)$ be the following formula (with parameters $a$ and $b$) expressing that there are at least $k$ elements between $a$ and $x$ and at least $k$ elements between $x$ and $b$:
\begin{align*}
\Phi_k(x;a,b) \quad\equiv\quad \exists^{\geq k} z \, (a \prec_\mc{M} z \prec_\mc{M} x) \;\andd\; \exists^{\geq k} z \, (x \prec_\mc{M} z \prec_\mc{M} b).
\end{align*}
Let $p(x) = \{\Phi_k(x; a, b) : k \in \Nb\}$.  Then $p(x)$ is a computable set of $\Sigma_1$ formulas.  Furthermore, $p(x)$ is a type over $\mc{M}$ because the interval $(a,b)_\mc{M}$ is infinite.  Therefore $p(x)$ is realized by some $c \in M$ because $\mc{M}$ is $\Sigma_1$\nobreakdash-recursively saturated.  Thus the intervals $(a, c)_\mc{M}$ and $(c, b)_\mc{M}$ are both infinite, so $a \pprec_\mc{M} c \pprec_\mc{M} b$.  It follows that $\condF(\mc{M})$ is dense.
\end{proof}

\section{Cohesive powers of computable copies of \texorpdfstring{$\omega$}{omega}}\label{sec-PowOfOmega}

We investigate the cohesive powers of computable linear orders of type $\omega$.  Observe that an infinite linear order has type $\omega$ if and only if every element has only finitely many predecessors.  We rely on this characterization throughout.  Though not part of the language of linear orders, every linear order $\mc{L}$ has an associated immediate successor relation $S^{\mc{L}} \subseteq |\mc{L}| \times |\mc{L}|$, where $S^{\mc{L}}(a, b)$ holds for $a, b \in |\mc{L}|$ if and only if $b$ is the $\prec_\mc{L}$\nobreakdash-immediate successor of $a$.  As explained in~\cite{Moses}*{Section~3}, a computable linear order $\mc{L}$ is $1$\nobreakdash-decidable if and only if the immediate successor relation $S^{\mc{L}}$ is computable.  It is straightforward to check that a computable copy $\mc{L}$ of $\omega$ is computably isomorphic to the usual presentation $(\Nb, <)$ if and only if $S^\mc{L}$ is computable.  Thus the computable copies of $\omega$ that are computably isomorphic to the usual presentation are exactly the $1$\nobreakdash-decidable copies of $\omega$.

We show that if $\mc{L}$ is a computable copy of $\omega$ that is computably isomorphic to the usual presentation, then every cohesive power of $\mc{L}$ has order-type $\omega + \zeta\eta$ (Theorem~\ref{thm-StdCohPow}).  This is to be expected because $\omega + \zeta\eta$ is familiar as the order-type of every countable non-standard model of Peano arithmetic (see~\cite{KayeBook}*{Theorem~6.4}).  However, being computably isomorphic to the usual presentation is not a characterization of the computable copies of $\omega$ having cohesive powers of order-type $\omega + \zeta\eta$.  We show that there is a computable copy of $\omega$ that is not computably isomorphic to the usual presentation, yet still has every cohesive power isomorphic to $\omega + \zeta\eta$ (Theorem~\ref{thm-CohPowNZQ}).  Thus to compute a copy of $\omega$ having a cohesive power not of type $\omega + \zeta\eta$, one must do more than simply arrange for the immediate successor relation to be non-computable.  We show that for every cohesive set $C$, there is a computable copy $\mc{L}$ of $\omega$ such that the cohesive power $\prod_C \mc{L}$ does not have order-type $\omega + \zeta\eta$ (Theorem~\ref{thm-NotNZQ}).  However, we also show that whenever $\mc{L}$ is a computable copy of $\omega$ and $C$ is a co-c.e.\ cohesive set, the finite condensation $\condF(\prod_C \mc{L})$ of the cohesive power $\prod_C \mc{L}$ always has order-type $\bm{1} + \eta$ (Theorem~\ref{thm-CoCeDenseCond}).

First, a cohesive power of a computable copy of $\omega$ always has an initial segment of order-type $\omega$.

\begin{Lemma}\label{lem-StdInitSeg}
Let $\mc{L}$ be a computable copy of $\omega$, and let $C$ be cohesive.  Then the image of the canonical embedding of $\mc{L}$ into $\prod_C \mc{L}$ is an initial segment of $\prod_C \mc{L}$ of order-type $\omega$.
\end{Lemma}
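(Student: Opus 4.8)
The plan is to handle the two assertions --- that the image is an initial segment and that it has order-type $\omega$ --- in turn, the second being the easy one. Write $\psi_a$ for the total computable function with constant value $a$, so that the canonical embedding is $a \mapsto [\psi_a]$. If $a \prec_\mc{L} b$, then $C \subseteq^* \Nb = \{x : \psi_a(x)\da \andd \psi_b(x)\da \andd \psi_a(x) \prec_\mc{L} \psi_b(x)\}$, so $[\psi_a] \prec_{\Pi_C\mc{L}} [\psi_b]$; in particular the canonical embedding is injective and order-preserving, so its image, with the induced order, is isomorphic to $\mc{L}$ and hence has order-type $\omega$.

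For the initial segment claim, fix $a \in L$ and suppose $[\varphi] \in |\Pi_C\mc{L}|$ satisfies $[\varphi] \prec_{\Pi_C\mc{L}} [\psi_a]$ (if $[\varphi] = [\psi_a]$ there is nothing to prove); the goal is to produce $b \in L$ with $[\varphi] = [\psi_b]$. Unwinding the definition of $\prec_{\Pi_C\mc{L}}$ and using that $\psi_a$ is total with value $a$, the hypothesis says $C \subseteq^* \{n : \varphi(n)\da \andd \varphi(n) \prec_\mc{L} a\}$. Since $\mc{L}$ is a copy of $\omega$, the element $a$ has only finitely many $\prec_\mc{L}$-predecessors, say $b_0, \dots, b_{k-1}$; hence $C \subseteq^* \bigcup_{i < k} W_i$, where $W_i = \{n : \varphi(n)\da = b_i\}$.

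Each $W_i$ is c.e.\ (run $\varphi(n)$ and test whether its output equals $b_i$), and $C$ is almost contained in the finite union $\bigcup_{i<k} W_i$. By cohesiveness, $C \subseteq^* W_i$ for some $i < k$: otherwise $C \subseteq^* \ol{W_i}$ for every $i < k$, whence $C \subseteq^* \bigcap_{i<k} \ol{W_i} = \ol{\bigcup_{i<k} W_i}$, which together with $C \subseteq^* \bigcup_{i<k} W_i$ would force $C$ to be finite. Fixing such an $i$, we get $C \subseteq^* \{n : \varphi(n)\da = b_i\} = \{n : \varphi(n)\da = \psi_{b_i}(n)\da\}$, i.e.\ $\varphi =_C \psi_{b_i}$, so $[\varphi] = [\psi_{b_i}]$ lies in the image of the canonical embedding, as required.

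I do not anticipate a genuine obstacle; the only step needing a moment's care is the passage from cohesiveness for a single c.e.\ set to the statement that a cohesive set cannot be covered, up to a finite set, by finitely many c.e.\ sets unless it is almost contained in one of them --- precisely the short inductive argument carried out in the last paragraph.
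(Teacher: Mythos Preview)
Your proof is correct and follows essentially the same approach as the paper's: both observe that an element $\prec_\mc{L}$-below $a$ must take values in the finite set of $\prec_\mc{L}$-predecessors of $a$ almost everywhere on $C$, and then use cohesiveness to pin down a single value. You spell out the cohesiveness step (that $C$ cannot be almost contained in a finite union of c.e.\ sets without being almost contained in one of them) in slightly more detail than the paper does, but the argument is the same.
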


\begin{proof}
Let $a_0 \prec_\mc{L} a_1 \prec_\mc{L} a_2 \prec_\mc{L} \cdots$ be the (not necessarily computable) listing of $|\mc{L}|$ in $\prec_\mc{L}$\nobreakdash-increasing order.  The image of the canonical embedding consists of those elements of the form $[\psi_{a_k}]$, where $\psi_{a_k}$ is the total computable function with constant value $a_k$.

For each $k \in \Nb$, we have that $\mc{L} \models \exists^{= k} x\, (x \prec_\mc{L} a_k)$ and therefore that $(\forae n \in C)(\exists^{= k} x)(x \prec_\mc{L} \psi_{a_k}(n))$.  Thus also $\prod_C \mc{L} \models (\exists^{= k} x)(x \prec_{\prod_C \mc{L}} [\psi_{a_k}])$ by Theorem~\ref{thm-LosGeneral} item~\ref{it-LosDelta2Param}.  That is, for each $k \in \Nb$, there are exactly $k$ many elements of $\prod_C \mc{L}$ that are $\prec_{\prod_C \mc{L}}$\nobreakdash-below $[\psi_{a_k}]$.  Thus $[\psi_{a_0}] \prec_{\prod_C \mc{L}} [\psi_{a_1}] \prec_{\prod_C \mc{L}} \cdots$ is an initial segment of $\prod_C \mc{L}$ of order-type $\omega$.
\end{proof}

Let $\mc{L}$ be a computable copy of $\omega$, let $C$ be cohesive, and let $\varphi \colon \Nb \imp |\mc{L}|$ be any total computable bijection.  Then $[\varphi]$ is not in the image of the canonical embedding of $\mc{L}$ into $\prod_C \mc{L}$, so it must be $\prec_{\prod_C \mc{L}}$\nobreakdash-above every element in the image of the canonical embedding.  Thus $\prod_C \mc{L}$ is of the form $\omega + \mc{M}$ for some non-empty linear order $\mc{M}$.  By analogy with the terminology for models of arithmetic, we call the elements of the $\omega$\nobreakdash-part of $\prod_C \mc{L}$ (i.e., the image of the canonical embedding) \emph{standard} and the elements of the $\mc{M}$\nobreakdash-part of $\prod_C \mc{L}$ \emph{non-standard}.  In terms of the finite condensation, we have that $\condF(\prod_C \mc{L}) \iso \bm{1} + \mc{N}$ for some linear order $\mc{N}$.  Call the block corresponding to $\bm{1}$ the $\emph{standard block}$ and the blocks corresponding to $\mc{N}$ the \emph{non-standard blocks}.  Lemma~\ref{lem-NonStdNoEnd} below implies that $\mc{N}$ is always infinite and therefore that $\mc{M}$ is always infinite as well.

\begin{Lemma}\label{lem-NonstdUnbdd}
Let $\mc{L} = (L, \prec_\mc{L})$ be a computable copy of $\omega$, let $C$ be cohesive, and let $[\varphi]$ be an element of $\prod_C \mc{L}$.  Then $[\varphi]$ is non-standard if and only if $\liminf_{n \in C} \varphi(n) = \infty$.
\end{Lemma}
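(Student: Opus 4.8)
The plan is to prove both directions by relating the property ``$[\varphi]$ is standard'' (i.e., $[\varphi]$ is in the image of the canonical embedding, equivalently $[\varphi] = [\psi_b]$ for the total computable constant function $\psi_b$ of value $b$ for some $b \in L$) to the property that $\varphi$ takes small values on infinitely much of $C$, and then to invoke the cohesiveness of $C$. Recall that, as noted before the statement, $\Pi_C \mc{L}$ has the form $\omega + \mc{M}$ with $\omega$-part exactly the image of the canonical embedding, so ``$[\varphi]$ is non-standard'' means precisely that $[\varphi] \neq [\psi_b]$ for every $b \in L$. Also recall that, since $L \subseteq \Nb$, the elements of $L$ are themselves natural numbers, so $\liminf_{n \in C} \varphi(n)$ is an ordinary $\liminf$ of naturals (defined on almost all of $C$ since $C \subseteq^* \dom(\varphi)$).

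For the implication from $\liminf_{n \in C}\varphi(n) = \infty$ to $[\varphi]$ being non-standard, I would argue the contrapositive. If $[\varphi]$ is standard, say $[\varphi] = [\psi_b]$, then $(\forae n \in C)(\varphi(n) = b)$, and since $b$ is a fixed natural number this forces $\liminf_{n \in C}\varphi(n) \leq b < \infty$. Hence if $\liminf_{n \in C}\varphi(n) = \infty$, then $[\varphi]$ cannot equal any $[\psi_b]$, so $[\varphi]$ is non-standard.

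For the converse, again I would use contraposition: suppose $\liminf_{n \in C}\varphi(n) \neq \infty$. Then there is an $m \in \Nb$ with $(\existsinf n \in C)(\varphi(n) \leq m)$. The set $E = L \cap \{0, 1, \dots, m\}$ is finite, say $E = \{b_0, \dots, b_{k-1}\}$, so $(\existsinf n \in C)(\varphi(n) \in E)$. For each $j < k$, the set $A_j = \{n : \varphi(n)\da = b_j\}$ is c.e., and $C \cap \bigcup_{j < k} A_j$ is infinite; as this is a finite union, $C \cap A_j$ is infinite for some $j < k$. By cohesiveness, $C \subseteq^* A_j$, i.e., $(\forae n \in C)(\varphi(n) = b_j)$, so $\varphi =_C \psi_{b_j}$ and $[\varphi] = [\psi_{b_j}]$ is standard. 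This is essentially the cohesiveness argument already used in the proof of Lemma~\ref{lem-StdInitSeg}.

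Finally, the parenthetical equivalence $\liminf_{n \in C}\varphi(n) = \infty \Biimp \lim_{n \in C}\varphi(n) = \infty$ is the general fact about $\Nb$-valued functions recorded in the preliminaries, which applies here because $\varphi$ is defined on almost every element of $C$. I do not anticipate a genuine obstacle; the only points needing a little care are making sure the sets $A_j$ are genuinely c.e.\ (so that cohesiveness applies to them) and interpreting $\liminf_{n \in C}\varphi(n)$ for the merely partial $\varphi$, which is harmless since $C \subseteq^* \dom(\varphi)$.
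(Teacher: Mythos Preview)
Your argument is correct and follows essentially the same approach as the paper: both directions are proved by contraposition, reducing ``standard'' to ``eventually constant on $C$'' via cohesiveness. The paper's converse is slightly more streamlined in that it takes $k = \liminf_{n \in C}\varphi(n)$ itself as the single value hit infinitely often on $C$ (so one application of cohesiveness suffices), whereas you bound by $m$ and pigeonhole over the finite set $E$; this is a cosmetic difference, not a substantive one.
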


\begin{proof}
If $[\varphi]$ is standard, then $\varphi$ is eventually constant on $C$, so $\liminf_{n \in C} \varphi(n)$ is finite.  Conversely, suppose that $\liminf_{n \in C} \varphi(n) = k$ is finite.  Then $(\existsinf n \in C)(\varphi(n) = k)$.  By cohesiveness, it must therefore be that $(\forae n \in C)(\varphi(n) = k)$.  That is, $\varphi$ is eventually constant on $C$, so $[\varphi]$ is standard.  
\end{proof}

In Lemma~\ref{lem-NonstdUnbdd}, the condition $\liminf_{n \in C} \varphi(n) = \infty$ may be replaced by either $\lim_{n \in C} \varphi(n) = \infty$ or $\limsup_{n \in C} \varphi(n) = \infty$ because if $C$ is cohesive, $\varphi$ is partial computable, and $C \subseteq^* \dom(\varphi)$, then $\liminf_{n \in C} \varphi(n) = \infty$, $\lim_{n \in C} \varphi(n) = \infty$, and $\limsup_{n \in C} \varphi(n) = \infty$ are all equivalent conditions.

The following Lemma~\ref{lem-NonStdNoEnd} says that if $\mc{L}$ is a computable copy of $\omega$ and $C$ is cohesive, then $\prod_C \mc{L}$ has neither a least nor a greatest non-standard block.  If $\mc{L}$ is $1$\nobreakdash-decidable or $C$ is co-c.e., then this can be proved by a saturation argument similar to that given in the proof of Theorem~\ref{thm-DenseBlocks}.  Instead, we give a hands-on proof that works for all computable $\mc{L} \iso \omega$ and all cohesive $C$.

\begin{Lemma}\label{lem-NonStdNoEnd}
Let $\mc{L} = (L, \prec_\mc{L})$ be a computable copy of $\omega$, let $C$ be cohesive, and let $[\varphi]$ be a non-standard element of $\prod_C \mc{L}$.  Then there are non-standard elements $[\psi^-]$ and $[\psi^+]$ of $\prod_C \mc{L}$ with $[\psi^-] \pprec_{\prod_C \mc{L}} [\varphi] \pprec_{\prod_C \mc{L}} [\psi^+]$.
\end{Lemma}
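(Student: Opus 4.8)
The plan is to exhibit $[\psi^+]$ and $[\psi^-]$ explicitly, as partial computable functions with $\dom(\psi^{\pm})=\dom(\varphi)$, so that $C\subseteq^*\dom(\psi^{\pm})$ holds automatically and $[\psi^{\pm}]\in|\Pi_C\mc{L}|$. Throughout, let $a_0\prec_\mc{L}a_1\prec_\mc{L}\cdots$ enumerate $\mc{L}$. By Lemma~\ref{lem-StdInitSeg} the standard elements of $\Pi_C\mc{L}$ are exactly the classes $[\psi_{a_i}]$, and they form an initial segment, so ``$[\theta]$ is non-standard'' is equivalent to ``for every $i$, $\theta(n)\succ_\mc{L}a_i$ for almost every $n\in C$'' (using Theorem~\ref{thm-FundCohPow}\,(2), since $\prec_\mc{L}$ is quantifier-free). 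Apart from this, the only tool needed is Lemma~\ref{lem-DifferentBlocks}: to obtain $[\psi^-]\pprec_{\Pi_C\mc{L}}[\varphi]\pprec_{\Pi_C\mc{L}}[\psi^+]$ it suffices to make $|(\psi^-(n),\varphi(n))_\mc{L}|$ and $|(\varphi(n),\psi^+(n))_\mc{L}|$ tend to infinity along $C$.

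\textbf{The element $[\psi^+]$ (the easy half).} On input $n$, compute $\varphi(n)$ and then search $\Nb$ for $n+1$ distinct elements $y_0,\dots,y_n\in L$ with $\varphi(n)\prec_\mc{L}y_i$ for each $i$, and output $\psi^+(n)=\max_{\prec_\mc{L}}\{y_0,\dots,y_n\}$. Since $\mc{L}\iso\omega$, every element has infinitely many $\prec_\mc{L}$-successors, so this search halts whenever $\varphi(n)\da$, giving $\dom(\psi^+)=\dom(\varphi)$. When $\psi^+(n)\da$ we have $\varphi(n)\prec_\mc{L}\psi^+(n)$, and the $n$ non-maximal $y_i$ all lie in $(\varphi(n),\psi^+(n))_\mc{L}$, so $|(\varphi(n),\psi^+(n))_\mc{L}|\geq n$ for every $n\in\dom(\varphi)$, hence $\lim_{n\in C}|(\varphi(n),\psi^+(n))_\mc{L}|=\infty$. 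By Lemma~\ref{lem-DifferentBlocks} this gives $[\varphi]\pprec_{\Pi_C\mc{L}}[\psi^+]$; in particular $[\varphi]\prec_{\Pi_C\mc{L}}[\psi^+]$, so $[\psi^+]$ lies above every standard element and is therefore non-standard.

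\textbf{The element $[\psi^-]$ (the main obstacle).} The difficulty here is that $\mc{L}$ need not have a computable successor function, equivalently the map sending $x$ to its number of $\prec_\mc{L}$-predecessors need not be computable, so one cannot simply ``step halfway down from $\varphi(n)$'' as one would for the standard presentation. The remedy is to work with a \emph{code-bounded} set of predecessors. On input $n$, compute $\varphi(n)$ and form the finite computable set $P_n=\{y<n:y\in L \text{ and } y\prec_\mc{L}\varphi(n)\}$; if $|P_n|\geq 2$, let $\psi^-(n)$ be the $\prec_\mc{L}$-median of $P_n$ (the $\lfloor(|P_n|-1)/2\rfloor$-th element of $P_n$ in $\prec_\mc{L}$-order), and otherwise let $\psi^-(n)=\varphi(n)$; then $\dom(\psi^-)=\dom(\varphi)$. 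The crucial point is that $|P_n|\to\infty$ along $C$: fix $j$, and note that for almost every $n\in C$ we have both $n>\max\{a_0,\dots,a_j\}$ and $\varphi(n)\succ_\mc{L}a_j$ (the latter by non-standardness of $[\varphi]$, which also yields $\varphi(n)\succ_\mc{L}a_i$ for all $i\leq j$), so $\{a_0,\dots,a_j\}\subseteq P_n$ and $|P_n|\geq j+1$. Hence, for each $j$, almost every $n\in C$ satisfies $|P_n|\geq 2j+2$, and for such $n$: $\psi^-(n)\in P_n$ (so $\psi^-(n)\prec_\mc{L}\varphi(n)$), $\psi^-(n)$ has at least $j$ predecessors in $P_n$, and $(\psi^-(n),\varphi(n))_\mc{L}$ contains the at least $j+1$ elements of $P_n$ lying $\prec_\mc{L}$-above $\psi^-(n)$. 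The first fact shows that, for every $j$, $\psi^-(n)$ has at least $j$ $\prec_\mc{L}$-predecessors for almost every $n\in C$, so $[\psi^-]$ lies above every standard element and is non-standard; the second gives $\liminf_{n\in C}|(\psi^-(n),\varphi(n))_\mc{L}|=\infty$, so $[\psi^-]\pprec_{\Pi_C\mc{L}}[\varphi]$ by Lemma~\ref{lem-DifferentBlocks}. This completes the construction; the only step requiring care is the verification that $|P_n|\to\infty$ along $C$, which is exactly where non-standardness of $[\varphi]$ and computability of $\mc{L}$ are used.
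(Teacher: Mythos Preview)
Your proof is correct, but it takes a different route from the paper's.  The paper fixes a single computable cofinal sequence $x_0 \prec_\mc{L} x_1 \prec_\mc{L} \cdots$ (obtained greedily: $x_{i+1}$ is the $<$-least element with $x_i \prec_\mc{L} x_{i+1}$) and then handles both directions symmetrically by index doubling and halving: set $\psi^+(n) = x_{2i}$ when $x_i \preceq_\mc{L} \varphi(n) \prec_\mc{L} x_{i+1}$, and $\psi^-(n) = x_i$ when $x_{2i} \preceq_\mc{L} \varphi(n) \prec_\mc{L} x_{2i+2}$.  Your construction is asymmetric: for $\psi^+$ you do a direct search for $n+1$ elements above $\varphi(n)$, and for $\psi^-$ you take the $\prec_\mc{L}$-median of the code-bounded predecessor set $P_n = \{y < n : y \prec_\mc{L} \varphi(n)\}$.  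The paper's approach is tidier in that one device serves both halves; your $\psi^+$ is arguably more elementary (no auxiliary sequence needed), and your $\psi^-$ is a nice alternative that replaces the cofinal sequence by a code bound to get a finite computable set to bisect.  Both approaches confront the same obstacle---that the rank function of $\mc{L}$ need not be computable---and both sidestep it by working with a computable surrogate that grows without bound along $C$.
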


\begin{proof}
Let $(\ell_i : i \in \Nb)$ be a computable enumeration of $L$.  Compute a sequence $x_0 \prec_\mc{L} x_1 \prec_\mc{L} x_2 \prec_\mc{L} \cdots$ that is cofinal in $\mc{L}$ by letting $x_0 = \ell_0$ and by letting each $x_{i+1}$ be the $<$\nobreakdash-least number with $\max_{\prec_\mc{L}}\{x_i, \ell_i\} \prec_\mc{L} x_{i+1}$.  Such an $x_{i+1}$ always exists because $\mc{L}$ has no $\prec_\mc{L}$\nobreakdash-maximum element.

Consider a non-standard $[\varphi] \in |\prod_C \mc{L}|$.  Define partial computable functions $\psi^-, \psi^+ \colon \Nb \imp L$ by
\begin{align*}
\psi^-(n) &\keq
\begin{cases}
x_i & \text{if $x_{2i} \preceq_\mc{L} \varphi(n) \prec_\mc{L} x_{2i+2}$}\\
\ua & \text{if $\varphi(n)\ua$}
\end{cases}\\ \\
\psi^+(n) &\keq
\begin{cases}
x_{2i} & \text{if $x_i \preceq_\mc{L} \varphi(n) \prec_\mc{L} x_{i+1}$}\\
\ua & \text{if $\varphi(n)\ua$}.
\end{cases}
\end{align*}

The element $[\varphi]$ is non-standard, so $(\forall i)(\forae n \in C)(x_{2i} \preceq_\mc{L} \varphi(n))$. Thus $(\forall i)(\forae n \in C)(x_i \preceq_\mc{L} \psi^-(n))$, so $[\psi^-]$ is non-standard as well.  Moreover, if $x_{2i} \preceq_\mc{L} \varphi(n) \prec_\mc{L} x_{2i+2}$, then $\psi^-(n) = x_i$, and therefore $|(\psi^-(n), \varphi(n))_\mc{L}| \geq i-1$ because $x_{i+1}, \dots, x_{2i-1} \in (\psi^-(n), \varphi(n))_\mc{L}$.  Therefore $\limsup_{n \in C}|(\psi^-(n), \varphi(n))_\mc{L}| = \infty$, so $[\psi^-] \pprec_{\prod_C \mc{L}} [\varphi]$ by Lemma~\ref{lem-DifferentBlocks}.  Similar reasoning shows that $[\varphi] \pprec_{\prod_C \mc{L}} [\psi^+]$.  Thus $[\psi^-] \pprec_{\prod_C \mc{L}} [\varphi] \pprec_{\prod_C \mc{L}} [\psi^+]$.
\end{proof}

If the computable linear order $\mc{L}$ is 1-decidable or the cohesive set $C$ is co-c.e., then between any two blocks there is a third.  We therefore have the following theorem.

\begin{Theorem}\label{thm-CoCeDenseCond}
Let $\mc{L}$ be a computable copy of $\omega$, and let $C$ be a cohesive set.  If either $\mc{L}$ is $1$\nobreakdash-decidable or $C$ is co-c.e., then $\condF(\prod_C \mc{L})$ has order-type $\bm{1} + \eta$.
\end{Theorem}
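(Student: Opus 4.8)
The plan is to assemble the statement from results already established, observing that all of the real work has been done. By Lemma~\ref{lem-StdInitSeg} the standard elements of $\Pi_C\mc{L}$ form an initial segment of order-type $\omega$, and since the interval between two standard elements in $\Pi_C\mc{L}$ is finite (it has the same size as the corresponding finite interval of $\mc{L}\iso\omega$), all standard elements lie in a single block of $\condF(\Pi_C\mc{L})$, the standard block. By Lemma~\ref{lem-NonStdNoEnd}, every non-standard $[\varphi]$ has non-standard $[\psi^-]\pprec_{\Pi_C\mc{L}}[\varphi]\pprec_{\Pi_C\mc{L}}[\psi^+]$, so the block of $[\varphi]$ is distinct from, and lies strictly between, two further non-standard blocks; hence $\condF(\Pi_C\mc{L})\iso\bm{1}+\mc{M}$, where $\mc{M}$ is the suborder of non-standard blocks, and $\mc{M}$ has neither a least nor a greatest element. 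The order $\mc{M}$ is non-empty because any total computable bijection $\varphi\colon\Nb\imp|\mc{L}|$ yields a non-standard element $[\varphi]$, and $\mc{M}$ is countable because $\Pi_C\mc{L}$ is countable.

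The remaining ingredient is density of $\mc{M}$, and this is exactly the point at which the co-c.e.\ hypothesis on $C$ enters: Theorem~\ref{thm-DenseBlocks} gives that $\condF(\Pi_C\mc{L})$ is dense. To transfer this to $\mc{M}$, take non-standard $[\psi],[\varphi]$ with $[\psi]\pprec_{\Pi_C\mc{L}}[\varphi]$; density provides $[\theta]$ with $[\psi]\pprec_{\Pi_C\mc{L}}[\theta]\pprec_{\Pi_C\mc{L}}[\varphi]$, and $[\theta]$ is non-standard since $[\psi]\prec_{\Pi_C\mc{L}}[\theta]$ and, by Lemma~\ref{lem-StdInitSeg}, the standard part is an initial segment. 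Thus the block of $[\theta]$ is a non-standard block strictly between those of $[\psi]$ and $[\varphi]$, so $\mc{M}$ is dense. Now $\mc{M}$ is a non-empty countable dense linear order without endpoints, hence $\mc{M}\iso\eta$ by Cantor's theorem (\cite{RosBook}*{Theorem~2.8}), and therefore $\condF(\Pi_C\mc{L})\iso\bm{1}+\eta$.

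I do not expect any serious obstacle at this stage. The genuinely delicate construction—enumerating $\graph(\theta)$ so that $[\theta]$ is forced strictly between two prescribed blocks, exploiting that $\ol{C}$ is c.e.\ so that a requirement, once witnessed by some $n$ that turns out to lie in $C$, stays satisfied forever—was already carried out in the proof of Theorem~\ref{thm-DenseBlocks}. The present argument is only the bookkeeping that combines the shape $\bm{1}+\mc{M}$, the absence of endpoints in $\mc{M}$, countability, and density into the claimed order-type; the one thing worth stating carefully is the short inheritance argument showing density of $\condF(\Pi_C\mc{L})$ passes to its final segment $\mc{M}$.
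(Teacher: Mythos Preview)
Your proposal is correct and follows essentially the same approach as the paper: the paper's proof is a three-sentence sketch citing Lemma~\ref{lem-StdInitSeg}, Theorem~\ref{thm-DenseBlocks}, and Lemma~\ref{lem-NonStdNoEnd}, and you have simply filled in the details those citations leave implicit (that the standard elements constitute a single block, that density of $\condF(\Pi_C\mc{L})$ restricts to density of the non-standard blocks, and that Cantor's theorem finishes the job). There is nothing to correct.
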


\begin{proof}
By Lemma~\ref{lem-StdInitSeg}, the standard elements of $\prod_C \mc{L}$ form an initial block.  By Theorem~\ref{thm-DenseBlocks} and Lemma~\ref{lem-NonStdNoEnd}, the non-standard blocks of $\prod_C \mc{L}$ form a countable dense linear order without endpoints.  Thus $\condF(\prod_C \mc{L}) \iso \bm{1} + \eta$.
\end{proof}

Thinking in terms of blocks, showing that a linear order $\mc{M}$ has type $\omega + \zeta\eta$ amounts to showing that $\mc{M}$ consists of an initial block of order-type $\omega$ followed by densely (without endpoints) ordered blocks of type $\zeta$.

\begin{Theorem}\label{thm-StdCohPow}
Let $\mc{L}$ be a computable copy of $\omega$ that is computably isomorphic to the usual presentation, and let $C$ be cohesive.  Then $\prod_C \mc{L}$ has order-type $\omega + \zeta\eta$.
\end{Theorem}

\begin{proof}
As explained above, it follows from~\cite{Moses}*{Section~3} that a computable copy $\mc{L}$ of $\omega$ is computably isomorphic to the usual presentation if and only if $\mc{L}$ is $1$\nobreakdash-decidable.  Thus we show that if $\mc{L}$ is a $1$\nobreakdash-decidable copy of $\omega$ and $C$ is cohesive, then $\prod_C \mc{L} \iso \omega + \zeta\eta$.

Let $\mc{L}$ be a $1$\nobreakdash-decidable copy of $\omega$, and let $C$ be cohesive.  By Lemma~\ref{lem-StdInitSeg} and Theorem~\ref{thm-CoCeDenseCond}, $\prod_C \mc{L}$ consists of an initial standard block of order-type $\omega$ followed by a densely (without endpoints) ordered collection of non-standard blocks.  It remains to show that each non-standard block has order-type $\zeta$.  To do this, it suffices to show that every non-standard element of $\prod_C \mc{L}$ has an $\prec_{\prod_C \mc{L}}$\nobreakdash-immediate successor and an $\prec_{\prod_C \mc{L}}$\nobreakdash-immediate predecessor.

Let $[\varphi] \in |\prod_C \mc{L}|$.  As $\mc{L} \iso \omega$, we have that
\begin{align*}
(\forae n \in C)\bigl(\mc{L} \models \exists x\, (\text{$x$ is the $\prec_\mc{L}$\nobreakdash-immediate successor of $\varphi(n)$})\bigr).
\end{align*}
The linear order $\mc{L}$ is $1$\nobreakdash-decidable and the relevant formula is $\Sigma_2$, so $[\varphi]$ has a $\prec_{\prod_C \mc{L}}$\nobreakdash-immediate successor in $\prod_C \mc{L}$ by the $n=1$ case of Theorem~\ref{thm-LosGeneral} item~\ref{it-LosDelta2Param}.  Now additionally suppose that $[\varphi]$ is not the $\prec_{\prod_C \mc{L}}$\nobreakdash-least element of $\prod_C \mc{L}$.  Then for almost every $n \in C$, $\varphi(n)$ is not the $\prec_\mc{L}$\nobreakdash-least element of $\mc{L}$.  Therefore
\begin{align*}
(\forae n \in C)\bigl(\mc{L} \models \exists x\, (\text{$x$ is the $\prec_\mc{L}$\nobreakdash-immediate predecessor of $\varphi(n)$})\bigr),
\end{align*}
so $[\varphi]$ has an $\prec_{\prod_C \mc{L}}$\nobreakdash-immediate predecessor in $\prod_C \mc{L}$ by Theorem~\ref{thm-LosGeneral} item~\ref{it-LosDelta2Param}.  It follows that if $[\varphi]$ is non-standard, then it has both an $\prec_{\prod_C \mc{L}}$\nobreakdash-immediate successor and an $\prec_{\prod_C \mc{L}}$\nobreakdash-immediate predecessor in $\prod_C \mc{L}$, which completes the proof.
\end{proof}

We can calculate the order-types of the cohesive powers of many other computable presentations of linear orders by combining Theorems~\ref{thm-IsoCohPow},~\ref{thm-CohPres},~\ref{thm-StdCohPow}, and the fact that $\prod_C \Qb \iso \eta$.
\begin{Example}\label{ex-CohPowCalc}
Let $C$ be a cohesive set.  Let $\Nb$, $\Zb$, and $\Qb$ denote the usual presentations of $\omega$, $\zeta$, and $\eta$.
\begin{enumerate}[(1)]
\item $\prod_C \Nb^* \iso \zeta\eta + \omega^*$:  This is because
\begin{align*}
\prod\nolimits_C \Nb^* \;\iso\; \left(\prod\nolimits_C \Nb\right)^* \;\iso\; (\omega + \zeta\eta)^* \;\iso\; \zeta\eta + \omega^*.
\end{align*}

\bigskip

\item $\prod_C \Zb \iso \zeta\eta$.  This is because $\Zb$ is computably isomorphic to $\Nb^* + \Nb$, so
\begin{align*}
\prod\nolimits_C \Zb \;\iso\; \prod\nolimits_C(\Nb^* + \Nb) \;\iso\; \prod\nolimits_C \Nb^* + \prod\nolimits_C \Nb \;\iso\; (\zeta\eta + \omega^*) + (\omega + \zeta\eta) \;\iso\; \zeta\eta + \zeta + \zeta\eta \;\iso\; \zeta\eta.
\end{align*}

\bigskip

\item\label{it-StdPowZQ} $\prod_C(\Zb\Qb) \iso \zeta\eta$.  This is because
\begin{align*}
\prod\nolimits_C(\Zb\Qb) \;\iso\; \left( \prod\nolimits_C \Zb \right) \left( \prod\nolimits_C\Qb \right) \;\iso\; (\zeta\eta)\eta \;\iso\; \zeta\eta.
\end{align*}

\bigskip

\item\label{it-StdPowNZQ} $\prod_C(\Nb + \Zb\Qb) \iso \omega + \zeta\eta$.  This is because
\begin{align*}
\prod\nolimits_C(\Nb + \Zb\Qb) \;\iso\; \prod\nolimits_C \Nb + \prod\nolimits_C(\Zb\Qb) \;\iso\; (\omega + \zeta\eta) + \zeta\eta \;\iso\; \omega + \zeta\eta.
\end{align*}
\end{enumerate}

Recall that, by Proposition~\ref{prop-UltrahomIso}, an ultrahomogeneous computable structure for a finite relational language, like the computable linear order $\Qb$, is isomorphic to each of its cohesive powers.  Notice, however, that the computable linear orders $\Zb\Qb$ and $\Nb + \Zb\Qb$ are not ultrahomogeneous, yet nevertheless are isomorphic to each of their respective cohesive powers.  Thus it is also possible for a non-ultrahomogeneous computable structure to be isomorphic to each of its cohesive powers.

Notice also that $\prod_C \Nb$ and $\prod_C(\Nb + \Zb\Qb)$ both have order-type $\omega + \zeta\eta$.  Similarly, $\prod_C \Zb$ and $\prod_C(\Zb\Qb)$ both have order-type $\zeta\eta$.  Thus it is possible for non-isomorphic linear orders to have isomorphic cohesive powers.  In Section~\ref{sec-DenseNonstd}, we give an example of a pair of non-elementarily equivalent linear orders with isomorphic cohesive powers.
\end{Example}

Now we give an example of a computable copy of $\omega$ that is not computably isomorphic to the usual presentation, yet still has all its cohesive powers isomorphic to $\omega + \zeta\eta$.

\begin{Theorem}\label{thm-CohPowNZQ}
There is a computable copy $\mc{L}$ of $\omega$ such that
\begin{itemize}
\item $\mc{L}$ is not computably isomorphic to the usual presentation of $\omega$, yet

\smallskip

\item for every cohesive set $C$, the cohesive power $\prod_C \mc{L}$ has order-type $\omega + \zeta\eta$.
\end{itemize}
\end{Theorem}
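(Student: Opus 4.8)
The plan is to realize $\mc{L}$ as the standard presentation of $\omega$ with the positions indexed by an infinite, co-infinite, non-computable c.e.\ set $A$ (say, a simple set) "doubled". Fix such an $A$ together with the list $a_0, a_1, a_2, \dots$ of its elements in order of enumeration, and fix a computable bijection $g$ from $\Nb$ onto $\{p : \pi_1(p) \neq 0\}$. The domain of $\mc{L}$ is $\Nb$; the number $\la k, 0\ra$ is the first element of the $k$-th block, and $g(i)$ is a second element placed immediately after $\la a_i, 0\ra$, so the $k$-th block has two elements when $k \in A$ and one element otherwise. Given a number $p$ one can computably recover the pair $(\text{block index}, \text{slot})$ it names --- recovering $a_i$ from $i$ is computable because $A$ is infinite --- and $\prec_\mc{L}$ is the lexicographic order on these pairs, hence computable. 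Then $\mc{L} \iso \omega$, and $S^\mc{L}(\la k, 0\ra) = \la k+1, 0\ra$ if and only if $k \notin A$, so $A \leq_\mathrm{m} S^\mc{L}$ and $S^\mc{L}$ is not computable; thus $\mc{L}$ is not computably isomorphic to the standard presentation of $\omega$.

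Two features of this presentation will drive the computation of $\Pi_C \mc{L}$. First, there are partial computable functions $u, v$ and a c.e.\ set $V$ such that $S^\mc{L}(x) = u(x)$ for $x \notin V$ and $S^\mc{L}(x) = v(x)$ for $x \in V$: take $V = \{\la k, 0\ra : k \in A\}$, let $u(\la k, 0\ra) = \la k+1, 0\ra$ and $u(g(i)) = \la a_i+1, 0\ra$, and let $v(\la k, 0\ra) = g(i)$ when $a_i = k$. The analogous statement holds for the immediate-predecessor function $P^\mc{L}$ of $\mc{L}$ with some c.e.\ set $V'$. Second, the total computable map $f \colon \Nb \imp \Nb$ sending each number to its block index is surjective and order-preserving, and each of its fibers has size at most $2$.

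Now let $C$ be cohesive; I will run the block analysis of Theorem~\ref{thm-StdCohPow}. By Lemma~\ref{lem-StdInitSeg} the standard elements form an initial block of type $\omega$, and by Lemma~\ref{lem-NonStdNoEnd} there is no least and no greatest non-standard block. To see that every $[\varphi] \in |\Pi_C \mc{L}|$ has an immediate successor, observe that $W = \{n : \varphi(n)\da \in V\}$ is c.e., so by cohesiveness either $C \subseteq^* \ol{W}$, whence $(\forae n \in C)\bl S^\mc{L}(\varphi(n)) = u(\varphi(n)) \br$ and $[u \circ \varphi]$ is the immediate successor of $[\varphi]$ by Lemma~\ref{lem-ImmedSucc}; or $C \subseteq^* W$, whence $[v \circ \varphi]$ is the immediate successor of $[\varphi]$. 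The same argument with $V'$ shows every non-least $[\varphi]$ has an immediate predecessor. For density of the non-standard blocks, suppose $[\psi] \pprec_{\Pi_C \mc{L}} [\varphi]$; by Lemma~\ref{lem-DifferentBlocks} we have $\lim_{n \in C}|(\psi(n), \varphi(n))_\mc{L}| = \infty$, and since every fiber of $f$ has size at most $2$ this forces $\lim_{n \in C}(f(\varphi(n)) - f(\psi(n))) = \infty$. Let $\theta(n)$ be the $\prec_\mc{L}$-least $x$ with $f(x) = \lfloor (f(\psi(n)) + f(\varphi(n)))/2 \rfloor$; this $\theta$ is partial computable, is defined for almost every $n \in C$, and, because $f$ is order-preserving and surjective, both $|(\psi(n), \theta(n))_\mc{L}|$ and $|(\theta(n), \varphi(n))_\mc{L}|$ are at least $(f(\varphi(n)) - f(\psi(n)))/2 - 2$ for almost every $n \in C$, so $[\psi] \pprec_{\Pi_C \mc{L}} [\theta] \pprec_{\Pi_C \mc{L}} [\varphi]$ by Lemma~\ref{lem-DifferentBlocks}. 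Hence the non-standard blocks form a countable dense linear order without endpoints, and since every element of such a block has an immediate successor and an immediate predecessor within it and any two of its elements have only finitely many elements between them, each non-standard block has order-type $\zeta$. Therefore $\Pi_C \mc{L} \iso \omega + \zeta\eta$, as required.

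I expect the main obstacle to be identifying a single simple presentation that simultaneously tames the successor and predecessor functions (so that the cohesiveness dichotomy produces immediate successors and predecessors) and supports density, while still keeping $S^\mc{L}$ non-computable --- the "doubling along a c.e.\ set" presentation is engineered to do exactly this, since it makes $S^\mc{L}$ and $P^\mc{L}$ agree with computable functions off a c.e.\ set and also carries the coarse computable position function $f$ used to bisect intervals. The subtlety most easily overlooked is that density of the non-standard blocks cannot be quoted from Theorem~\ref{thm-DenseBlocks}, because $C$ is an arbitrary cohesive set rather than a co-c.e.\ one, and so it must be extracted from the explicit function $f$.
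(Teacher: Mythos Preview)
Your proof is correct and takes essentially the same approach as the paper: your ``doubling along a c.e.\ set'' presentation is a re-encoding (via the pairing function rather than even/odd) of the paper's classic construction, where evens form the backbone and an odd number is inserted immediately after $2a$ for each $a$ in a fixed non-computable c.e.\ set.  Your two-way split of $S^{\mc{L}}$ (and $P^{\mc{L}}$) via a single c.e.\ set $V$ is a slightly cleaner packaging of the paper's three-case analysis, and your block-index bisection for density is exactly the paper's argument of reducing to the even backbone and averaging.
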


\begin{proof}
We use a classic example of a computable copy of $\omega$ with a non-computable immediate successor relation.  Fix any non-computable c.e.\ set $A$, and let $f \colon \Nb \imp A$ be a computable bijection.  Let $\mc{L} =(\Nb ,\prec_\mc{L})$ be the linear order obtained by ordering the even numbers according to their usual order and by setting $2a \prec_\mc{L} 2k + 1 \prec_\mc{L} 2a + 2$ if and only if $f(k) = a$. Specifically, define
\begin{align*}
2c &\prec_\mc{L} 2d & &\Biimp & 2c &< 2d \\
2c &\prec_\mc{L} 2k + 1 & &\Biimp & c &\leq f(k) \\
2k + 1 &\prec_\mc{L} 2c & &\Biimp & f(k) &< c \\
2k + 1 &\prec_\mc{L} 2\ell + 1 & &\Biimp & f(k) &< f(\ell).
\end{align*}
Then $\mc{L}$ is a computable linear order of type $\omega$.  Let $S^\mc{L}$ denote the immediate successor relation of $\mc{L}$.  Then $A \leqT S^\mc{L}$ (in fact, $A \equivT S^\mc{L}$) because $a \in A$ if and only if $\neg S^\mc{L}(2a, 2a+2)$.  Thus $S^\mc{L}$ is not computable, so $\mc{L}$ is not computably isomorphic to the usual presentation of $\omega$.

Let $C$ be cohesive.  We show that $\prod_C \mc{L} \iso \omega + \zeta\eta$.  To do this, expand the language to $\{\prec, E, R\}$, where $E$ is a unary relation and $R$ is a binary relation.  Expand $\mc{L}$ by interpreting $E$ as the evens and by interpreting $R$ as the immediate successor relation among the evens:
\begin{align*}
E^\mc{L}(a) \quad&\Biimp\quad \text{$a = 2n$ for some $n$}\\
R^\mc{L}(a, b) \quad&\Biimp\quad \text{$a = 2n$ and $b = 2n+2$ for some $n$}.
\end{align*}
Now consider the substructure $\mc{B}$ of $\mc{L}$ with domain $\{a \in |\mc{L}| : E^\mc{L}(a)\}$ and the substructure $\mc{D}$ of $\prod_C \mc{L}$ with domain $\bigl\{[\varphi] \in |\prod_C \mc{L}| : E^{\prod_C \mc{L}}([\varphi])\bigr\}$.  As a linear order, $\mc{B}$ is computably isomorphic to the usual presentation of $\omega$.  Therefore $\mc{D} \iso \prod_C \mc{B} \iso \omega + \zeta\eta$ as linear orders by Proposition~\ref{prop-substr} and Theorem~\ref{thm-StdCohPow}.

In $\mc{L}$, if $z$ is not in the substructure $\mc{B}$, then $z$ is the unique element $\prec_\mc{L}$\nobreakdash-between two consecutive elements of $\mc{B}$.  That is, $\mc{L}$ satisfies the following $\Pi_2$ sentences.
\begin{align*}
\mc{L} &\models \forall z \, [\neg E(z) \;\imp\; \exists x \exists y \, (R(x,y) \,\andd\, x \prec z \prec y)]\\
\mc{L} &\models \forall x \forall y \forall z_0 \forall z_1 \, [(R(x,y) \,\andd\, x \prec z_0 \prec y \,\andd\, x \prec z_1 \prec y) \;\imp\; z_0 = z_1].
\end{align*}
By Theorem~\ref{thm-LosGeneral} item~\ref{it-LosDelta3Sent}, $\prod_C \mc{L}$ also satisfies these sentences.  Thus in $\prod_C \mc{L}$, if some $[\varphi]$ is not in the substructure $\mc{D}$, then $[\varphi]$ is the unique element $\prec_{\prod_C \mc{L}}$\nobreakdash-between two consecutive elements of $\mc{D}$.  As $\mc{D} \iso \omega + \zeta\eta$, we may conclude that $\prod_C \mc{L} \iso \omega + \zeta\eta$ as well.
\end{proof}

Now we show that for every cohesive set $C$, there is a computable copy $\mc{L}$ of $\omega$ such that $\prod_C \mc{L}$ is not isomorphic, indeed, not elementarily equivalent, to $\omega + \zeta\eta$.  The strategy is to arrange for the element $[\id]$ of $\prod_C \mc{L}$ represented by the identity function $\id \colon \Nb \imp \Nb$ to have no $\prec_{\prod_C \mc{L}}$\nobreakdash-immediate successor.  This exhibits an elementary difference between $\prod_C \mc{L}$ and $\omega + \zeta\eta$ because every element of $\omega + \zeta\eta$ has an immediate successor.  This also shows that Theorem~\ref{thm-LosGeneral} item~\ref{it-LosSigma3Sent} is tight:  ``every element has an immediate successor'' is a $\Pi_3$ sentence that is true of $\mc{L}$ but not of $\prod_C \mc{L}$.

\begin{Theorem}\label{thm-NotNZQ}
Let $C$ be any cohesive set.  Then there is a computable copy $\mc{L}$ of $\omega$ for which $\prod_C \mc{L}$ is not elementarily equivalent (and hence not isomorphic) to $\omega + \zeta\eta$.
\end{Theorem}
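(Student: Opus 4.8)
The plan is to build the computable copy $\mc{L}$ of $\omega$ so that the identity function $\id$ represents an element $[\id]$ of $\Pi_C \mc{L}$ with no immediate successor; this gives the desired elementary difference, since every element of $\omega + \zeta\eta$ has an immediate successor, so the $\Pi_3$ sentence ``every element has an immediate successor'' separates $\omega + \zeta\eta$ from $\Pi_C\mc{L}$. By Lemma~\ref{lem-ImmedSucc}, it suffices to arrange that for \emph{no} partial computable $\varphi$ with $C \subseteq^* \dom(\varphi)$ is it the case that $\varphi(n)$ is the $\prec_\mc{L}$-immediate successor of $n$ for almost every $n \in C$. The key idea is that in $\mc{L}$, the $\prec_\mc{L}$-immediate successor of $n$ will be placed, by the construction, at a location that is not computably predictable: for each potential candidate function $\varphi_e$, we insert a fresh element between $n$ and its current $\prec_\mc{L}$-successor at some large stage, thereby diagonalizing against $\varphi_e$ being the successor function along $C$.

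First I would set up $\mc{L}$ as a computable linear order on domain $\Nb$ built in stages, maintaining at each stage $s$ a finite linear order $\mc{L}_s$ on $\{0, 1, \dots, m_s - 1\}$ that is an initial segment under the current ordering, with each $\mc{L}_s$ of finite order-type, and with $\mc{L} = \bigcup_s \mc{L}_s$ of type $\omega$ (ensured by never making any fixed element have infinitely many $\prec_\mc{L}$-predecessors; new elements are always appended, or inserted into an existing finite ``gap''). The requirements are, for each $e$:
\begin{align*}
R_e : \ \text{$\varphi_e$ is \emph{not} a function such that } (\forae n \in C)(\text{$\varphi_e(n)$ is the $\prec_\mc{L}$-immediate successor of $n$}).
\end{align*}
To satisfy $R_e$, I reserve a ``follower'' number $n_e$ (a large natural number not yet used in any $\mc{L}_s$ and not yet equal to any other follower), wait for a stage $s$ at which $\varphi_{e,s}(n_e)\da = v$ for some $v$ with $v$ already in $\mc{L}_s$ and $v$ currently the $\prec_\mc{L}$-successor of $n_e$; then I act by inserting a brand-new element strictly between $n_e$ and $v$ in the ordering, so that from that stage on, $\varphi_e(n_e)$ is no longer the immediate successor of $n_e$. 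If $\varphi_e(n_e)$ never halts, or halts with a value that is not the current immediate successor of $n_e$, then $R_e$ is satisfied trivially at $n_e$ (note $n_e \in C$ is not required — we only need that $\varphi_e$ fails to be the successor function at the single point $n_e$, \emph{provided} $n_e \in C$). This is the delicate point: I must ensure $n_e \in C$, but I have no way to compute $C$.

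The resolution — and this is where I expect the \textbf{main obstacle} to lie — is to appeal to cohesiveness rather than trying to choose $n_e$ inside $C$. Instead of one follower per requirement, I let $R_e$ control infinitely many followers $n_e^0 < n_e^1 < n_e^2 < \cdots$, and I run the diagonalization described above on each of them in turn. Then the set $D_e = \{n_e^i : R_e \text{ acted on } n_e^i\}$, i.e., the set of $e$-followers at which I successfully inserted a new element between $n_e^i$ and $\varphi_e(n_e^i)$, is c.e.\ (or I instead consider the complementary c.e.\ set of followers where $\varphi_e$ diverges or misfires). By cohesiveness, $C$ is almost contained in $D_e$ or in its complement among the $e$-followers; in either case, for almost every $e$-follower $n_e^i \in C$, the function $\varphi_e$ fails to be the $\prec_\mc{L}$-immediate-successor function at $n_e^i$ — in the $D_e$ case because we inserted an element, in the complementary case because $\varphi_e$ diverges or returns a wrong value there. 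Hence $(\exists^\infty n \in C)(\varphi_e(n)$ is not the immediate successor of $n)$, so by Lemma~\ref{lem-ImmedSucc} (contrapositive of \ref{it-aeImmedSucc}$\Leftrightarrow$\ref{it-existsinfImmedSucc}, rather the negation), $[\varphi_e]$ is not the immediate successor of $[\id]$ in $\Pi_C\mc{L}$. Since this holds for every $e$ and every member of $\Pi_C\mc{L}$ is of the form $[\varphi_e]$ for some $e$ (after passing to a total representative, which exists when... actually $C$ need not be co-c.e.\ here, so one works directly with partial computable $\varphi$ and the same argument applies), the element $[\id]$ has no immediate successor. Finally I would verify $\mc{L}$ is a computable copy of $\omega$: it is a computable linear order because the ordering at each stage is decidable and stabilizes on each pair; it has type $\omega$ because every element acquires only finitely many predecessors (each follower is inserted into a bounded gap, and each newly inserted element is likewise placed in a bounded position), completing the proof.
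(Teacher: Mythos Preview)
Your overall strategy---build $\mc{L}$ so that $[\id]$ has no immediate successor---matches the paper's, and you correctly identify the delicate point: you cannot simply pick a follower $n_e$ and hope $n_e \in C$.  However, your proposed fix via per-requirement followers has a genuine gap.

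If the follower sets $\{n_e^i : i \in \Nb\}$ for different $e$ are pairwise disjoint (as ``reserving'' followers suggests), then by cohesiveness $C$ meets at most one such set infinitely often.  For every other $e$, the statement ``for almost every $e$-follower $n_e^i \in C$, $\varphi_e$ fails'' is vacuous, and you cannot conclude $(\existsinf n \in C)(\varphi_e(n)$ is not the immediate successor of $n)$.  So $R_e$ is not met.  If instead you let every $n$ be a follower for every $e$, you face a different problem you do not address: each time you insert a fresh element $m$ between some $n$ and $\varphi_e(n)$, this $m$ is itself a follower, and other requirements may later demand insertions below $m$.  Nothing in your sketch bounds the number of insertions below a fixed element, so the assertion ``each newly inserted element is likewise placed in a bounded position'' is unsupported and $\mc{L} \iso \omega$ is in jeopardy.

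The paper resolves both issues with one device you are missing.  Before the construction it fixes an infinite \emph{computable} set $R \subseteq \ol{C}$: partition $\Nb$ into evens and odds, note that cohesiveness forces $C$ almost entirely into one side, and take $R$ to be a tail of the other side.  All inserted elements are drawn from $R$, and the construction only ever acts at inputs $n \notin R$.  Since $C \cap R = \emptyset$, every $n \in C$ is eligible for action, so one gets $(\forae n \in C)(\varphi_e(n)$ is not the immediate successor of $n)$ directly, with no post-hoc cohesiveness argument.  And since inserted elements lie in $R$, they never themselves trigger further insertions.  A simple priority restraint (do not act for $\varphi_e$ at any $n$ lying $\preceq_\mc{L}$-below some $j \leq e$) then guarantees that only the finitely many $e < k$ can ever add elements $\prec_\mc{L}$-below $k$, each acting at most once per $n \notin R$ already below $k$; hence $\mc{L} \iso \omega$.
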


\begin{proof}
Let $(\varphi_e)_{e \in \Nb}$ denote the usual effective list of all partial computable functions, and recall that $\varphi_{e,s}(n)$ denotes the result of running $\varphi_e$ on input $n$ for $s$ computational steps.  We compute a linear order $\mc{L} = (\Nb, \prec_\mc{L})$ of type $\omega$ such that for every $\varphi_e$:
\begin{align*}
(\forae n \in C)\bigl[\varphi_e(n)\da \;\Imp\; (\text{$\varphi_e(n)$ is not the $\prec_\mc{L}$\nobreakdash-immediate successor of $n$})\bigr].  \tag{$*$}\label{eq-AvoidId}
\end{align*}

By Lemma~\ref{lem-ImmedSucc}, achieving~\eqref{eq-AvoidId} for $\varphi_e$ ensures that $[\varphi_e]$ is not the $\prec_{\prod_C \mc{L}}$\nobreakdash-immediate successor of $[\id]$ in $\prod_C \mc{L}$.  Therefore, achieving~\eqref{eq-AvoidId} for every $\varphi_e$ ensures that $[\id]$ has no $\prec_{\prod_C \mc{L}}$\nobreakdash-immediate successor in $\prod_C \mc{L}$.  Thus $\prod_C \mc{L}$ is not elementarily equivalent to $\omega + \zeta\eta$ because every element of $\omega + \zeta\eta$ has an immediate successor, which is a $\Pi_3$ property.

Fix an infinite computable set $R \subseteq \ol{C}$.  Such an $R$ may be obtained, for example, by partitioning $\Nb$ into the even numbers $R_0$ and the odd numbers $R_1$.  By cohesiveness, $C \subseteq^* R_i$ for either $i = 0$ or $i = 1$, in which case $R_{1-i} \subseteq^* \ol{C}$.  Thus we may take $R$ to be an appropriate tail of $R_{1-i}$.

Define $\prec_\mc{L}$ in stages.  By the end of stage $s$, $\prec_\mc{L}$ will have been defined on $X_s \times X_s$ for some finite $X_s \supseteq \{0,1,\dots, s\}$.  As the construction progresses, at some stage we may notice an $e$ and $n$ such that $\varphi_e(n) = a$ looks like the $\prec_\mc{L}$\nobreakdash-immediate successor of $n$, where $n$ may or may not be in $C$.  In this case, we want to add an $m$ to the order and set $n \prec_\mc{L} m \prec_\mc{L} a$ to help achieve~\eqref{eq-AvoidId} for $\varphi_e$.  If we choose an $m$ that may be in $C$, then at some later stage there may be an $i$ for which $\varphi_i(m) = a$ looks like the $\prec_\mc{L}$\nobreakdash-immediate successor of $m$, and then we would want to add another element $\prec_\mc{L}$\nobreakdash-below $a$.  If this happens infinitely often, then we would add infinitely many elements $\prec_\mc{L}$\nobreakdash-below $a$, in which case $\mc{L}$ would not be a copy of $\omega$.  We avoid this problem by choosing $m$ from $R$, which is safe because $R \subseteq \ol{C}$.  Since we know that $m \notin C$, we do not need to worry about it when trying to achieve~\eqref{eq-AvoidId}.

At stage $0$, set $X_0 = \{0\}$ and define $0 \nprec_\mc{L} 0$.  At stage $s > 0$, start with $X_s = X_{s-1}$, and update $X_s$ and $\prec_\mc{L}$ according to the following procedure.

\begin{enumerate}[(1)]
\item If $\prec_\mc{L}$ has not yet been defined on $s$ (i.e., if $s \notin X_s$), then update $X_s$ to $X_s \cup \{s\}$ and extend $\prec_\mc{L}$ to make $s$ the $\prec_\mc{L}$\nobreakdash-greatest element of $X_s$.

\medskip

\item\label{it:BreakSucc} Consider each $\la e, n \ra < s$ in order.  For each $\la e,n \ra < s$, if
\smallskip
\begin{enumerate}[(a)]
\item $\varphi_{e,s}(n)\da \in X_s$,

\smallskip

\item $\varphi_e(n)$ is currently the $\prec_\mc{L}$\nobreakdash-immediate successor of $n$ in $X_s$,

\smallskip

\item\label{it:notR} $n \notin R$, and

\smallskip

\item\label{it:restraint} $n$ is not $\preceq_\mc{L}$\nobreakdash-below any of $0, 1, \dots, e$,
\end{enumerate}
\smallskip
\noindent
then let $m$ be the $<$\nobreakdash-least element of $R \setminus X_s$, update $X_s$ to $X_s \cup \{m\}$, and extend $\prec_\mc{L}$ so that $n \prec_\mc{L} m \prec_\mc{L} \varphi_e(n)$.
\end{enumerate}
This completes the construction.

We claim that for every $k$, there are only finitely many elements $\prec_\mc{L}$\nobreakdash-below $k$.  It follows that $\mc{L}$ has order-type $\omega$.  Say that $\varphi_e$ \emph{acts for $n$ and adds $m$} when $\prec_\mc{L}$ is defined on an $m \in R$ to make $n \prec_\mc{L} m \prec_\mc{L} \varphi_e(n)$ as in~\ref{it:BreakSucc}.  Let $s_0$ be a stage with $k \in X_{s_0}$.  Suppose at some stage $s > s_0$, an $m$ is added to $X_s$ and $m \prec_\mc{L} k$ is defined.  This can only be due to a $\varphi_e$ acting for an $n \notin R$ and adding $m$ at stage $s$.  Thus at stage $s$, it must be that $n \prec_\mc{L} k$ because $n \prec_\mc{L} m \prec_\mc{L} k$.  Therefore it must also be that $e < k$, for otherwise $k$ would be among $0,1,\dots, e$, and condition~\ref{it:restraint} would prevent the action of $\varphi_e$.  Furthermore, $m$ is chosen from $R$, so only elements of $R$ are added $\prec_\mc{L}$\nobreakdash-below $k$ after stage $s_0$.  All together, this means that an $m$ can only be added $\prec_\mc{L}$\nobreakdash-below $k$ after stage $s_0$ when a $\varphi_e$ with $e < k$ acts for an $n \prec_\mc{L} k$ with $n \notin R$.  Each $\varphi_e$ acts at most once for each $n$, and no new $n \notin R$ appears $\prec_\mc{L}$\nobreakdash-below $k$ after stage $s_0$.   Thus after stage $s_0$, only finitely many $m$ are ever added $\prec_\mc{L}$\nobreakdash-below $k$.

Finally, we claim that~\eqref{eq-AvoidId} is satisfied for every $\varphi_e$.  Given $e$, let $\ell$ be the $\prec_\mc{L}$\nobreakdash-maximum element of $\{0, 1, \dots, e\}$.  Observe that almost every $n \in \Nb$ satisfies $n \succ_\mc{L} \ell$ because $\mc{L} \iso \omega$.  So suppose that $n \succ_\mc{L} \ell$ and $n \in C$.  If $\varphi_e(n)\da$, let $s$ be large enough so that $\la e, n \ra < s$, $\varphi_{e,s}(n)\da$, $n \in X_s$, and $\varphi_e(n) \in X_s$.  Then either $\varphi_e(n)$ is already not the $\prec_\mc{L}$\nobreakdash-immediate successor of $n$ at stage $s+1$, or at stage $s+1$ the conditions of~\ref{it:BreakSucc} are satisfied for $\la e, n \ra$, and an $m$ is added such that $n \prec_\mc{L} m \prec_\mc{L} \varphi_e(n)$.  This completes the proof.
\end{proof}

\begin{Corollary}\label{cor-LosSigma3Tight}
Theorem~\ref{thm-LosGeneral} item~\ref{it-LosSigma3Sent} cannot be improved in general:  there is a computable linear order $\mc{L}$, a cohesive set $C$, and a $\Pi_3$ sentence $\Phi$ such that $\mc{L} \models \Phi$, but $\prod_C \mc{L} \nmodels \Phi$.
\end{Corollary}

\begin{proof}
Let $C$ be any cohesive set, and let $\mc{L}$ be a computable copy of $\omega$ as in Theorem~\ref{thm-NotNZQ} for $C$.  Let $\Phi$ be a $\Pi_3$ sentence in the language of linear orders expressing that every element has an immediate successor.  Then $\mc{L} \models \Phi$, but $\prod_C \mc{L} \nmodels \Phi$.
\end{proof}

Corollary~\ref{cor-LosSigma3Tight} may also be deduced from Lerman's proof of Feferman, Scott, and Tennenbaum's theorem that no cohesive power of the standard model of arithmetic is a model of Peano arithmetic (see~\cite{LermanCo-r-Max}*{Theorem~2.1}).  Lerman uses Kleene's $T$ predicate to give a somewhat technical example of a $\Pi_3$ sentence that holds in the standard model of arithmetic but fails in every cohesive power.  Our proof of Corollary~\ref{cor-LosSigma3Tight} is more satisfying because it witnesses the optimality of Theorem~\ref{thm-LosGeneral} item~\ref{it-LosSigma3Sent} with a natural $\Pi_3$ sentence in the simple language of linear orders.  In the next section, we enhance the construction of Theorem~\ref{thm-NotNZQ} to compute a copy $\mc{L}$ of $\omega$ with $\prod_C \mc{L} \iso \omega + \eta$ under the additional assumption that the given cohesive set $C$ is co-c.e.

Finally, we show that Theorem~\ref{thm-LosProdParam} concerning cohesive products is also tight by uniformly computing a sequence of finite linear orders $(\mc{L}_n : n \in \Nb)$ such that for every cohesive set $C$, the cohesive product $\prod_C \mc{L}_n$ is a linear order with no maximum element.  Thus the $\Sigma_2$ sentence ``there is a maximum element'' is true in $\mc{L}_n$ for each $n$ (because each $\mc{L}_n$ is finite), whereas the $\Pi_2$ sentence ``there is no maximum element'' is true in $\prod_C \mc{L}_n$ for every cohesive set $C$.  Although each linear order $\mc{L}_n$ has a maximum element, the sequence of maximum elements is not computable.

\begin{Proposition}\label{prop-LosProdParamTight}
There is a uniformly computable sequence of finite linear orders $(\mc{L}_n : n \in \Nb)$ such that for every cohesive set $C$, the cohesive product $\prod_C \mc{L}_n$ is a linear order with no maximum element.  Therefore Theorem~\ref{thm-LosProdParam} cannot be improved in general.
\end{Proposition}

\begin{proof}
For each $n$, let $\mc{L}_n = (L_n, \prec)$ be the linear order with domain
\begin{align*}
L_n = \{0\} \cup \{t : (\exists e < n)(\exists s < t)(\text{$t = s + \varphi_{e,s}(n) + 1$, where $s$ is least such that $\varphi_{e,s}(n)\da$})\}
\end{align*}
that is ordered by the usual order by taking $\prec$ equal to $<$ on $L_n$.  To compute whether a given $t > 0$ is in $L_n$, first run $\varphi_e(n)$ for $t$ steps for each $e < n$.  Then for each such $\varphi_e(n)$ that halts within $t$ steps, find the $s$ such that $\varphi_e(n)$ halts in exactly $s$ steps, and compute the number $s + \varphi_e(n) + 1$.  If any of these numbers is $t$, then $t \in L_n$.  Otherwise $t \notin L_n$.  Notice that each $e < n$ contributes at most one element $t$ to $L_n$, so $\mc{L}_n$ has at most $1 + n$ elements.

Let $C$ be cohesive, and consider an element $[\varphi_e]$ of $\prod_C \mc{L}_n$.  We show that $[\varphi_e]$ is not the $\prec_{\prod_C \mc{L}_n}$\nobreakdash-greatest element of $\prod_C \mc{L}_n$ and therefore that $\prod_C \mc{L}_n$ has no $\prec_{\prod_C \mc{L}_n}$\nobreakdash-greatest element.  If $n > e$ is a sufficiently large member of $C$, then $\varphi_e(n)\da \in L_n$.  This means that there is a least $s$ such that $\varphi_{e,s}(n)\da$ and therefore that there is a $t \in L_n$ with $\varphi_e(n) \prec t$.  Thus $(\forae n \in C)(\mc{L}_n \models \exists x\, (\varphi_e(n) \prec x))$, so $\prod_C \mc{L}_n \models \exists x ([\varphi_e] \prec x)$ by Theorem~\ref{thm-LosProdParam} item~\ref{it-LosProdPramDelta2}.  That is, $[\varphi_e]$ is not the $\prec_{\prod_C \mc{L}_n}$\nobreakdash-greatest element of $\prod_C \mc{L}_n$.
\end{proof}

\section{A computable copy of \texorpdfstring{$\omega$}{omega} with a cohesive power of order-type \texorpdfstring{$\omega + \eta$}{omega + eta}}\label{sec-DenseNonstd}

Given a co-c.e.\ cohesive set, we compute a copy $\mc{L}$ of $\omega$ for which $\prod_C \mc{L}$ has order-type $\omega + \eta$.  In order to help shuffle various linear orders into cohesive powers in Section~\ref{sec-Shuffle}, we in fact compute a linear order $\mc{L} = (L, \prec_\mc{L})$ along with a coloring function $F \colon L \imp \Nb$ that colors the elements of $\mc{L}$ with countably many colors so as to induce a coloring with a certain density property on $\prod_C \mc{L}$.

\begin{Definition}
A \emph{colored linear order} is a structure $\mc{O} = (L, \Nb, \prec_\mc{L}, F)$, where $\mc{L} = (L, \prec_\mc{L})$ is a linear order and $F$ is (the graph of) a function $F \colon L \imp \Nb$, thought of as a coloring of $L$.  Here the language includes unary relation symbols for $L$ and $\Nb$ and a binary relation symbol for $F$ in addition to the binary relation symbol $\prec$.
\end{Definition}

Let $\mc{O} = (L, \Nb, \prec_\mc{L}, F)$ be a colored linear order, and let $\mc{L} = (L, \prec_\mc{L})$.  We may think of $\mc{O}$ as the disjoint union $\mc{L} \sqcup \Nb$ expanded to include $F$.  Thus if $\mc{O}$ is a computable colored linear order and $C$ is a cohesive set, then the cohesive power $\prod_C \mc{O}$ consists of a linear order $\prod_C \mc{L}$, a set $\prod_C \Nb$ thought of as a collection of colors, and a (graph of a) function $F^{\prod_C \mc{O}} \colon |\prod_C \mc{L}| \imp |\prod_C \Nb|$ thought of as a coloring of $\prod_C \mc{L}$.  This is by Proposition~\ref{prop-DU} and by Theorem~\ref{thm-LosGeneral} item~\ref{it-LosDelta3Sent}, as $F$ being the graph of a function from the substructure given by $L$ into the substructure given by $\Nb$ can be expressed by a $\Pi_2$ sentence.  In $\prod_C \mc{O}$, we denote elements of $\prod_C \mc{L}$ by $[\varphi]$ and elements of $\prod_C \Nb$ by $\llb \delta \rrb$.  Call a color $\llb \delta \rrb \in |\prod_C \Nb|$ a \emph{solid color} if $\delta$ is eventually constant on $C$ (i.e., if $\llb \delta \rrb$ is in the range of the canonical embedding of $\Nb$ into $\prod_C \Nb$).  Otherwise, call $\llb \delta \rrb$ a \emph{striped color}.  Finally, call a colored linear order $\mc{O} = (L, \Nb, \prec_\mc{L}, F)$ a \emph{colored copy of $\omega$} if $\mc{L} \iso \omega$.

\begin{Definition}\label{def-colorful}
Let $\mc{O} = (L, \Nb, \prec_\mc{L}, F)$ be a computable colored copy of $\omega$, and let $\mc{L}$ denote $(L, \prec_\mc{L})$.  Let $C$ be a cohesive set.  Call the cohesive power $\prod_C \mc{O}$ \emph{colorful} if the following items hold.
\begin{itemize}
\item For every pair of non-standard elements $[\varphi], [\psi] \in |\prod_C \mc{L}|$ with $[\psi] \prec_{\prod_C \mc{L}} [\varphi]$ and every solid color $\llb \delta \rrb \in |\prod_C \Nb|$, there is a $[\theta] \in |\prod_C \mc{L}|$ with $[\psi] \prec_{\prod_C \mc{L}} [\theta] \prec_{\prod_C \mc{L}} [\varphi]$ and $F^{\prod_C \mc{O}}([\theta]) = \llb \delta \rrb$.

\medskip

\item For every pair of non-standard elements $[\varphi], [\psi] \in |\prod_C \mc{L}|$ with $[\psi] \prec_{\prod_C \mc{L}} [\varphi]$, there is a $[\theta] \in |\prod_C \mc{L}|$ with $[\psi] \prec_{\prod_C \mc{L}} [\theta] \prec_{\prod_C \mc{L}} [\varphi]$ where $F^{\prod_C \mc{O}}([\theta])$ is a striped color.
\end{itemize}
\end{Definition}

Thus if $\mc{O} = (L, \Nb, \prec_\mc{L}, F)$ is a computable colored copy of $\omega$ and $C$ is cohesive, then $\prod_C \mc{O}$ being colorful means that the solid colors occur densely in the non-standard part of $\prod_C \mc{L}$ and also that between any two elements of the non-standard part of $\prod_C \mc{L}$ there is an element with a striped color.  Notice that we do not require any individual striped color to occur densely in the non-standard part of $\prod_C \mc{L}$.  If $C$ is a co-c.e.\ cohesive set, then the first bullet of Definition~\ref{def-colorful} implies the second.  This can be seen by a saturation argument, if one generalizes Lemma~\ref{lem-SatCoCe} to allow types over $\prod_C \mc{O}$ with an infinite sequence of parameters $([\theta_i] : i \in \Nb)$ represented by a uniformly partial computable sequence $(\theta_i : i \in \Nb)$.  Here the relevant parameters would be the non-standard elements $[\varphi]$ and $[\psi]$ and the sequence of solid colors.  The type would then describe an element between $[\varphi]$ and $[\psi]$ whose color is not among the solid colors.

In Section~\ref{sec-Shuffle}, we show that replacing each point of $\mc{L}$ by some finite linear order depending on its color has the effect of shuffling these finite orders into the non-standard part of $\prod_C \mc{L}$.

\begin{Theorem}\label{thm-ColorsDenseNonstd}
Let $C$ be a co-c.e.\ cohesive set.  Then there is a computable colored copy $\mc{O}$ of $\omega$ such that $\prod_C \mc{O}$ is colorful.
\end{Theorem}

\begin{proof}
We construct a computable copy $\mc{L} = (L, \prec_\mc{L})$ of $\omega$ with $L = \Nb$ and a function $F \colon L \imp \Nb$ so that $\mc{O} = (L, \Nb, \prec_\mc{L}, F)$ is a computable colored copy of $\omega$ for which $\prod_C \mc{O}$ is colorful.  We are working with a co-c.e.\ cohesive set, so recall that in this situation every element $[\varphi]$ of $\prod_C \mc{L}$ has a total computable representative by the discussion following Definition~\ref{def-CohProd}.  Recall also that an element $[\varphi]$ of $\prod_C \mc{L}$ is non-standard if and only if $\lim_{n \in C}\varphi(n) = \infty$ by Lemma~\ref{lem-NonstdUnbdd}.

The goal of the construction is to arrange, for every pair of total computable functions $\varphi$ and $\psi$ with $\lim_{n \in C}\varphi(n) = \lim_{n \in C}\psi(n) = \infty$, that
\begin{align*}
(\forae n \in C)\Bigl( &\psi(n)\da \prec_\mc{L} \varphi(n)\da\\
&\Imp\; \bigl( \forall d \leq \max\nolimits_<\{\varphi(n), \psi(n)\} \bigr) \bigl( \exists k \bigr) \bigl[(\psi(n) \prec_\mc{L} k \prec_\mc{L} \varphi(n)) \,\andd\, (F(k) = d) \bigr]\Bigr).\tag{$*$}\label{eq-MakeDense}
\end{align*}
Suppose we achieve~\eqref{eq-MakeDense} for $\varphi$ and $\psi$, where $\lim_{n \in C}\varphi(n) = \lim_{n \in C}\psi(n) = \infty$ and $(\forae n \in C)(\varphi(n)\da \prec_\mc{L} \psi(n)\da)$.  Fix any color $d$, and let $\delta$ be the constant function with value $d$.  Partially compute a function $\theta(n)$ by searching for a $k$ with $\psi(n) \prec_\mc{L} k \prec_\mc{L} \varphi(n)$ and $F(k) = d$.  If there is such a $k$, let $\theta(n)$ be the first such $k$.  Property~\eqref{eq-MakeDense} and the assumption $\lim_{n \in C}\varphi(n) = \lim_{n \in C}\psi(n) = \infty$ ensure that there is such a $k$ for almost every $n \in C$.  Therefore $C \subseteq^* \dom(\theta)$, $[\psi] \prec_{\prod_C \mc{L}} [\theta] \prec_{\prod_C \mc{L}} [\varphi]$, and $F^{\prod_C \mc{O}}([\theta]) = \llb \delta \rrb$.  Likewise, we could instead define $\theta(n)$ to search for a $k$ with $\psi(n) \prec_\mc{L} k \prec_\mc{L} \varphi(n)$ and $F(k) = \varphi(n)$ and let $\theta(n)$ be the first (if any) such $k$ found.  In this case we would have $[\psi] \prec_{\prod_C \mc{L}} [\theta] \prec_{\prod_C \mc{L}} [\varphi]$ and $F^{\prod_C \mc{O}}([\theta]) = \llb \varphi \rrb$, which is a striped color because $\lim_{n \in C}\varphi(n) = \infty$.  Notice that $\varphi$ represents both an element $[\varphi]$ of the linear order $\prod_C \mc{L}$ and a color $\llb \varphi \rrb$ from $\prod_C \Nb$.  As a color, $F^{\prod_C \mc{O}}([\theta]) = \llb \varphi \rrb$ because $(\forae n \in C)(F(\theta(n)) = \varphi(n))$.  Thus between $[\psi]$ and $[\varphi]$ there are elements of $\prod_C \mc{L}$ of every solid color and also at least one element of a striped color.  Therefore achieving~\eqref{eq-MakeDense} suffices to prove the theorem, provided we also arrange $\mc{L} \iso \omega$.

Let $W$ denote the c.e.\ set $\ol{C}$, and let $(W_s)_{s \in \Nb}$ be a computable $\subseteq$\nobreakdash-increasing enumeration of $W$.  Let $(A^{i,0}, A^{i,1})_{i \in \Nb}$ be a uniformly computable sequence of pairs of sets such that
\begin{itemize}
\item for each $i$, $A^{i,0}$ and $A^{i,1}$ partition $\Nb$ into two pieces (i.e., $A^{i,1} = \ol{A^{i,0}}$) and

\smallskip

\item $(\forall n)(\forall \sigma \in \{0,1\}^n)\bigl(\text{$\bigcap_{i < n} A^{i, \sigma(i)}$ is infinite}\bigr)$.
\end{itemize}
This can be accomplished by partitioning $\Nb$ into successive pieces of size $2^i$, letting $A^{i,0}$ consist of every other piece, and letting $A^{i,1} = \ol{A^{i,0}}$.

In this proof, denote the projection functions associated to the pairing function $\la \cdot, \cdot \ra$ by $\ell$ and $r$, for \emph{left} and \emph{right}, instead of by $\pi_0$ and $\pi_1$.  So $\ell(\la x, y \ra) = x$ and $r(\la x, y \ra) = y$.

The tension in the construction is between achieving~\eqref{eq-MakeDense} and ensuring that for every $z$, there are only finitely many $x$ with $x \prec_\mc{L} z$.  Think of a $p \in \Nb$ as coding a pair $(\varphi_{\ell(p)}, \varphi_{r(p)})$ of partial computable functions for which we would like to achieve~\eqref{eq-MakeDense}, with $\varphi_{\ell(p)}$ playing the role of $\psi$ and $\varphi_{r(p)}$ playing the role of $\varphi$.  We assign the partition $(A^{2p,0}, A^{2p,1})$ to $\varphi_{\ell(p)}$ and the partition $(A^{2p+1,0}, A^{2p+1,1})$ to $\varphi_{r(p)}$.  The sets $\{n : \varphi_{\ell(p)}(n) \in A^{2p,0}\}$ and $\{n : \varphi_{\ell(p)}(n) \in A^{2p,1}\}$ are both c.e., so if $C \subseteq^* \dom(\varphi_{\ell(p)})$, then either $(\forae n \in C)(\varphi_{\ell(p)}(n) \in A^{2p,0})$ or $(\forae n \in C)(\varphi_{\ell(p)}(n) \in A^{2p,1})$; and similarly for $\varphi_{r(p)}$ and $(A^{2p+1,0}, A^{2p+1,1})$.  As the construction proceeds, we consider each $p$ paired with larger and larger guesses $N$ of a threshold by which the cohesive behavior of $\varphi_{\ell(p)}$ and $\varphi_{r(p)}$ begins with respect to the partitions $(A^{2p,0}, A^{2p,1})$ and $(A^{2p+1,0}, A^{2p+1,1})$.  The pair $\la p, N \ra$ means we guess that there is an $(a,b) \in \{0,1\} \times \{0,1\}$ such that $\varphi_{\ell(p)}(n) \in A^{2p,a}$ and $\varphi_{r(p)}(n) \in A^{2p+1,b}$ whenever $n \geq N$ and $n \in C$.  For each fixed $p$, the pairs $\la p, 0 \ra, \la p, 1 \ra, \la p, 2 \ra, \dots$ all try to achieve~\eqref{eq-MakeDense} for $\varphi_{\ell(p)}$ and $\varphi_{r(p)}$.  If $N$ is too small, then pair $\la p , N \ra$ eventually stops acting.  If $N$ is big enough, then pair $\la p, N \ra$ eventually settles on the correct sides $(a, b)$ of the partitions $(A^{2p,0}, A^{2p,1})$ and $(A^{2p+1,0}, A^{2p+1,1})$.

To help satisfy~\eqref{eq-MakeDense}, eventually pair $\la p, N \ra$ will want to add an element $k_0$ between some $\varphi_{\ell(p)}(n)$ and $\varphi_{r(p)}(n)$ for an $n$ that looks like it may be in $C$.  However, later some pair $\la q, M \ra$ (possibly even with $q = p$) may want to add an element $k_1$ between some $\varphi_{\ell(q)}(m)$ and $\varphi_{r(q)}(m)$ for an $m$ that looks like it may be in $C$, and it may also so happen that $\varphi_{r(q)}(m) = k_0$.  In this case, $\la q, M \ra$ would add $k_1 \prec_\mc{L} k_0$.  If this behavior were to continue, then it would lead to a descending sequence $k_0 \succ_\mc{L} k_1 \succ_\mc{L} k_2 \succ_\mc{L} \cdots$, which means that $\mc{L}$ would not have order-type $\omega$.  To avoid these descending sequences, pair $\la p, N \ra$ tries to choose the $k$ that it adds to avoid the images $\varphi_{\ell(q)}(C)$ and $\varphi_{r(q)}(C)$ for all $q$ corresponding to higher-or-equal priority pairs $\la q, M \ra \leq \la p, N \ra$.  To do this, first, for each such $q$, pair $\la p, N \ra$ looks up the most recent guess $(a_q, b_q)$ of sides such that $\varphi_{\ell(q)}(C) \subseteq^* A^{2q, a_q}$ and $\varphi_{r(q)}(C) \subseteq^* A^{2q+1, b_q}$ made by any of the $\la q, M \ra \leq \la p, N \ra$ for this $q$.  Then pair $\la p, N \ra$ chooses 
\begin{align*}
k \in \bigcap_{\la q, M \ra \leq \la p, N \ra} A^{2q,1-a_q} \cap A^{2q+1,1-b_q}
\end{align*}
from the opposite sides of all these partitions in an attempt to avoid $\varphi_{\ell(q)}(C)$ and $\varphi_{r(q)}(C)$ for every $\la q, M \ra \leq \la p, N \ra$.  The staggering of the partitions ensures that there are infinitely many such $k$ to choose among.

We now give the construction.  Define $\prec_\mc{L}$ and $F$ in stages.  By the end of stage $s$, $\prec_\mc{L}$ will have been defined on $X_s \times X_s$, and $F$ will have been defined on $X_s$ for some finite $X_s \supseteq \{0, 1, \dots, s\}$.

At stage $0$, set $X_0 = \{0\}$ with $0 \nprec_\mc{L} 0$ and $F(0) = 0$.  At stage $s > 0$, initially set $X_s = X_{s-1}$.  If $s \notin X_s$, then add $s$ to $X_s$, define it to be the $\prec_\mc{L}$\nobreakdash-maximum element of $X_s$, and define $F(s) = 0$.  Then proceed as follows.

Consider each pair $\la p, N \ra < s$ in order.  Think of $\la p, N \ra$ as coding a pair $(\varphi_{\ell(p)}, \varphi_{r(p)})$ of partial computable functions and a guess $N$ of a threshold by which the cohesive behavior of $\varphi_{\ell(p)}$ and $\varphi_{r(p)}$ begins with respect to the partitions $(A^{2p,0}, A^{2p,1})$ and $(A^{2p+1,0}, A^{2p+1,1})$ as described above.  The pair $\la p, N \ra$ \emph{demands action} if there is an $(a, b, n) \in \{0,1\} \times \{0,1\} \times \{N, N+1, \dots, s\}$ meeting the following conditions.
\begin{enumerate}[(1)]
\item\label{it:tot} For all $m \leq n$, $\varphi_{\ell(p),s}(m)\da$ and $\varphi_{r(p),s}(m)\da$.

\medskip

\item\label{it:checkA} Both $\varphi_{\ell(p)}(n) \in A^{2p, a}$ and $\varphi_{r(p)}(n) \in A^{2p+1, b}$.

\medskip

\item\label{it:checkW} For all $m$ with $N \leq m \leq n$,
	\begin{itemize}
	\item $\varphi_{\ell(p)}(m) \in A^{2p, 1-a} \;\Imp\; m \in W_s$, and
	\item $\varphi_{r(p)}(m) \in A^{2p+1, 1-b} \;\Imp\; m \in W_s$.
	\end{itemize}

\medskip

\item\label{it:gap} We have that $\varphi_{\ell(p)}(n), \varphi_{r(p)}(n) \in X_s$ and $\varphi_{\ell(p)}(n) \prec_\mc{L} \varphi_{r(p)}(n)$, but currently there is a $d \leq \max_<\{\varphi_{\ell(p)}(n), \varphi_{r(p)}(n)\}$ for which there is no $k \in X_s$ with $\varphi_{\ell(p)}(n) \prec_\mc{L} k \prec_\mc{L} \varphi_{r(p)}(n)$ and $F(k) = d$.

\medskip

\item\label{it:priority} The element $\varphi_{\ell(p)}(n)$ is not $\preceq_\mc{L}$\nobreakdash-below any of $0, 1, \dots, \la p, N \ra$.
\end{enumerate}

If $\la p, N \ra$ demands action, let $(a_p, b_p, n) \in \{0,1\} \times \{0,1\} \times \{N, N+1, \dots, s\}$ be the lexicographically least witness to this, call $(a_p, b_p, n)$ the \emph{action witness} for $\la p, N \ra$, call the first two coordinates $(a_p, b_p)$ of the action witness the \emph{action sides} for $\la p, N \ra$, and call the last coordinate $n$ of the action witness the \emph{action input} for $\la p, N \ra$.

Let $r$ be the $<$\nobreakdash-greatest number for which there is an $M$ with $\la r, M \ra \leq \la p, N \ra$.  For each $q \leq r$, let $(a_q, b_q)$ be the most recently used action sides by any pair of the form $\la q, M \ra$ with $\la q, M \ra \leq \la p, N \ra$.  If no $\la q, M \ra \leq \la p, N \ra$ has yet demanded action, then let $(a_q, b_q) = (0, 0)$.  Let $c = \max_<\{\varphi_{\ell(p)}(n), \varphi_{r(p)}(n)\}$, and let $k_0 < k_1 < \cdots < k_c$ be the $c+1$ least members of
\begin{align*}\label{fmla:intersect}
\bigcap_{q \leq r}\left(A^{2q, 1-a_q} \cap A^{2q+1, 1-b_q}\right) \setminus X_s, \tag{$\star$}
\end{align*}
which exist because the intersection is infinite and $X_s$ is finite.  Add $k_0, \dots, k_c$ to $X_s$.  Let $x \in X_s$ be the current $\prec_\mc{L}$\nobreakdash-greatest element of the interval $(\varphi_{\ell(p)}(n), \varphi_{r(p)}(n))_\mc{L}$ (or $x = \varphi_{\ell(p)}(n)$ if the interval is empty), and set
\begin{align*}
\varphi_{\ell(p)}(n) \preceq_\mc{L} x \prec_\mc{L} k_0 \prec_\mc{L} \cdots \prec_\mc{L} k_c \prec_\mc{L} \varphi_{r(p)}(n).
\end{align*}
Also set $F(k_i) = i$ for each $i \leq c$, and say that \emph{$\la p, N \ra$ has acted and added $k$'s}.  This completes the construction.

The constructed $\mc{L}$ is a computable linear order.  We show that $\mc{L} \iso \omega$ by showing that for each $z$, there are only finitely many elements $\prec_\mc{L}$\nobreakdash-below $z$.  So fix $z$.  Note that $z$ appears in $X_s$ at stage $s = z$ at the latest, so we consider the development of the construction at stages $s > z$.

Consider the actions of a pair $\la p, N \ra$.  If $\la p, N \ra \geq z$ and $\la p, N \ra$ acts at stage $s > z$ with action input $n$, then, by condition~\ref{it:priority}, it must be that $z \prec_\mc{L} \varphi_{\ell(p)}(n) \prec_\mc{L} \varphi_{r(p)}(n)$.  In this case, the action adds elements to $X_s$ and places them $\prec_\mc{L}$\nobreakdash-between $\varphi_{\ell(p)}(n)$ and $\varphi_{r(p)}(n)$ and hence places them $\prec_\mc{L}$\nobreakdash-above $z$.  Therefore, only the actions of $\la p, N \ra$ with $\la p, N \ra < z$ can add elements $\prec_\mc{L}$\nobreakdash-below $z$ at stages $s > z$.

We show that each $\la p, N \ra < z$ only ever acts to add finitely many elements $k \prec_\mc{L} z$.  It follows that there are only finitely many elements $\prec_\mc{L}$\nobreakdash-below $z$ because the $\la p, N \ra \geq z$ add no elements $\prec_\mc{L}$\nobreakdash-below $z$ after stage $z$, and each $\la p, N \ra < z$ adds only finitely many elements $\prec_\mc{L}$\nobreakdash-below $z$.  So let $\la p, N \ra < z$, and assume inductively that there is a stage $s_0 > z$ such that no pair $\la q, M \ra < \la p, N \ra$ acts to add elements $k \prec_\mc{L} z$ after stage $s_0$.

Notice that a given $n$ can be the action input for $\la p, N \ra$ at most once.  If $\la p, N \ra$ demands action with action input $n$ at stage $s$, it adds elements of every color $\leq \max_<\{\varphi_{\ell(p)}(n), \varphi_{r(p)}(n)\}$ to $X_s$ and places them $\prec_\mc{L}$\nobreakdash-between $\varphi_{\ell(p)}(n)$ and $\varphi_{r(p)}(n)$.  Thus condition~\ref{it:gap} is never again satisfied for $\la p, N \ra$ with action input $n$ at any stage $t > s$.

Suppose that either $\varphi_{\ell(p)}(m)\ua$ or $\varphi_{r(p)}(m)\ua$ for some $m$.  Then no $n \geq m$ can be an action input for $\la p, N \ra$ because condition~\ref{it:tot} always fails when $n \geq m$.  Thus only finitely many numbers $n$ can be action inputs for $\la p, N \ra$.  Because each of these $n$ can be an action input for $\la p, N \ra$ at most once, the pair $\la p, N \ra$ demands action only finitely many times.  Thus in this case, $\la p, N \ra$ adds only finitely many elements $\prec_\mc{L}$\nobreakdash-below $z$.

We now focus on the case in which both $\varphi_{\ell(p)}$ and $\varphi_{r(p)}$ are total.  By cohesiveness, let $(a, b) \in \{0,1\} \times \{0,1\}$ be such that $(\forae n \in C)(\varphi_{\ell(p)}(n) \in A^{2p, a})$ and $(\forae n \in C)(\varphi_{r(p)}(n) \in A^{2p+1, b})$.  The following Claims~\ref{claim-SettleSides}--\ref{claim-AddFin} establish that $\la p, N \ra$ adds only finitely many elements $\prec_\mc{L}$\nobreakdash-below $z$.

First, consider all pairs $\la p, M \ra < z$ with this fixed $p$.  

\begin{ClaimC}\label{claim-SettleSides}
There is a stage $s_1 \geq s_0$ such that for every $M$ with $\la p, M \ra < z$, whenever $\la p, M \ra$ demands action at a stage $s \geq s_1$, it always has action sides $(a,b)$.
\end{ClaimC}

\begin{proof}[Proof of Claim~\ref{claim-SettleSides}]
There are only finitely many $\la p, M \ra < z$, so it suffices to show that for each $\la p, M \ra < z$, there is a stage $t$ such that $\la p, M \ra$ has action sides $(a,b)$ whenever it demands action (if it ever demands action) after stage $t$.

Let $m$ be the least member of $C$ with $m \geq M$, $\varphi_{\ell(p)}(m) \in A^{2p, a}$, and $\varphi_{r(p)}(m) \in A^{2p+1, b}$.  Then whenever $\la p, M \ra$ demands action and the action witness $(a_p, b_p, n)$ has $n \geq m$, it must be that $(a_p, b_p) = (a, b)$ because otherwise condition~\ref{it:checkW} would fail.  Suppose, for example, that $\la p, M \ra$ demands action at stage $s$ with action witness $(a_p, b_p, n)$ where $n \geq m$ and $a_p = 1-a$.  Then $M \leq m \leq n$ and $\varphi_{\ell(p)}(m) \in A^{2p, 1-a_p}$, but $m \notin W_s$ because $m \in C$.  Thus condition~\ref{it:checkW} fails, so $\la p, M \ra$ could not have demanded action with action witness $(a_p, b_p, n)$.  The assumption $b_p = 1-b$ in place of $a_p = 1-a$ leads to the same contradiction.  On the other hand, each $n < m$ can be the action input for $\la p, M \ra$ at most once.  Therefore, there is a stage $t \geq s_0$ such that whenever $\la p, M \ra$ demands action at a later stage $s \geq t$, the action witness must have action input $n \geq m$ and therefore must have action sides $(a, b)$.
\end{proof}

Assume that $\la p, N \ra$ demands action infinitely often because otherwise we can immediately conclude that it adds only finitely many elements $\prec_\mc{L}$\nobreakdash-below $z$.  Let $s_1$ be as in Claim~\ref{claim-SettleSides}, let $t > s_1$ be a stage at which $\la p, N \ra$ demands action, and let $s_2 = t+1$.  Then $\la p, N \ra$ has action sides $(a,b)$ at stage $t < s_2$, and whenever some $\la p, M \ra < z$ demands action at a stage $s \geq s_2 > s_1$, it also has action sides $(a,b)$.  Thus at every stage $s \geq s_2$, the most recently used action sides by a $\la p, M \ra < z$ is always $(a,b)$.

\begin{ClaimC}\label{claim-AddGoodK}
Suppose that an element $k$ is added to $X_s$ and $k \prec_\mc{L} z$ is defined at some stage $s \geq s_2$.  Then $k \in A^{2p, 1-a} \cap A^{2p+1, 1-b}$.
\end{ClaimC}

\begin{proof}[Proof of Claim~\ref{claim-AddGoodK}]
We already know that if $\la q, M \ra \geq z$, then $\la q, M \ra$ does not add elements $k \prec_\mc{L} z$ after stage $s_2$.  Thus we need only consider pairs $\la q, M \ra < z$.  For these pairs, we have assumed inductively that if $\la q, M \ra < \la p, N \ra$, then $\la q, M \ra$ does not add elements $k \prec_\mc{L} z$ after stage $s_2$.  Thus we need only consider pairs $\la q, M \ra$ with $\la p, N \ra \leq \la q, M \ra < z$.  Suppose such a $\la q, M \ra$ acts after stage $s_2$.  When $\la q, M \ra$ chooses the $k$'s to add, it uses an $r \geq p$ in the intersection~\eqref{fmla:intersect} because $\la p, N \ra \leq \la q, M \ra$.  The action of pair $\la q, M \ra$ must use $(a_p, b_p) = (a,b)$.  This is because after stage $s_2$, $(a,b)$ is always the most recently used action sides by the pairs of the form $\la p, K \ra$ with $\la p, K \ra < z$.  Because $\la p, N \ra \leq \la q, M \ra < z$, it is thus also the case that $(a,b)$ is always the most recently used action sides by the pairs of the form $\la p, K \ra \leq \la q, M \ra$ at every stage after $s_2$.  Thus when $\la q, M \ra$ acts at some stage $s \geq s_2$, it uses $(a_p, b_p) = (a,b)$, and therefore the $k$'s it adds to $X_s$ are chosen from $A^{2p, 1-a} \cap A^{2p+1, 1-b}$, as claimed.
\end{proof}

We can now show that $\la p, N \ra$ adds only finitely many elements $k \prec_\mc{L} z$.

\begin{ClaimC}\label{claim-AddFin}
The pair $\la p, N \ra$ adds only finitely many elements $k \prec_\mc{L} z$.
\end{ClaimC}

\begin{proof}[Proof of Claim~\ref{claim-AddFin}]
Suppose that $\la p, N \ra$ acts at some stage $s \geq s_2$, adds an element $k$ to $X_s$, and defines $k \prec_\mc{L} z$.  Then at stage $s$, the action witness for $\la p, N \ra$ must be $(a,b,n)$ for some $n$, where $\varphi_{\ell(p)}(n) = x$ for some $x \in A^{2p, a}$, $\varphi_{r(p)}(n) = y$ for some $y \in A^{2p+1, b}$, and $x \prec_\mc{L} y \preceq_\mc{L} z$.  The action then places $k$'s of each color $d \leq \max_<\{x,y\}$ in the interval $(x, y)_\mc{L}$.  If $\la p, N \ra$ acts again at some later stage $t > s$ with some action input $m$, then again $\varphi_{\ell(p)}(m) \in A^{2p, a}$ and $\varphi_{r(p)}(m) \in A^{2p+1, b}$.  However, it cannot again be that $\varphi_{\ell(p)}(m) = x$ and $\varphi_{r(p)}(m) = y$ because condition~\ref{it:gap} would fail in this situation.  Thus when adding a number $k \prec_\mc{L} z$, the action input $n$ used by $\la p, N \ra$ specifies a pair $(x,y) = (\varphi_{\ell(p)}(n), \varphi_{r(p)}(n)) \in A^{2p, a} \times A^{2p+1, b}$ with $x \prec_\mc{L} y \preceq_\mc{L} z$, and each such pair can be specified by $\la p, N \ra$ at most once.  By Claim~\ref{claim-AddGoodK}, every element added $\prec_\mc{L}$\nobreakdash-below $z$ after stage $s_2$ is in $A^{2p, 1-a} \cap A^{2p+1, 1-b}$.  Therefore there are only finitely many pairs $(x,y) \in A^{2p, a} \times A^{2p+1, b}$ with $x \prec_\mc{L} y \preceq_\mc{L} z$, and therefore $\la p, N \ra$ can only add finitely many elements $k \prec_\mc{L} z$.
\end{proof}

We have shown that for every $z$, no $\la p, N \ra \geq z$ adds an element $\prec_\mc{L}$\nobreakdash-below $z$ after stage $z$ and that each $\la p, N \ra < z$ adds only finitely many elements $\prec_\mc{L}$\nobreakdash-below $z$.  Thus for every $z$, only finitely many elements are ever added $\prec_\mc{L}$\nobreakdash-below $z$.  Therefore $\mc{L} \iso \omega$.

Now let $\varphi$ and $\psi$ be total computable functions with $\lim_{n \in C}\varphi(n) = \lim_{n \in C}\psi(n) = \infty$.  We complete the proof by showing that~\eqref{eq-MakeDense} is satisfied for $\varphi$ and $\psi$.  Assume that $(\forae n \in C)(\psi(n) \prec_\mc{L} \varphi(n))$, for otherwise~\eqref{eq-MakeDense} vacuously holds.  Let $p$ be such that $\varphi_{\ell(p)} = \psi$ and $\varphi_{r(p)} = \varphi$.  By cohesiveness, let $(a,b) \in \{0,1\} \times \{0,1\}$ and $N \in \Nb$ be such that, for all $n \in C$ with $n > N$, $\varphi_{\ell(p)}(n) \in A^{2p, a}$ and $\varphi_{r(p)}(n) \in A^{2p+1, b}$.  Let $n_0 \geq N$ be large enough so that for all $n \in C$ with $n \geq n_0$, $\varphi_{\ell(p)}(n)$ is not $\preceq_\mc{L}$\nobreakdash-below any of $0, 1, \dots, \la p, N \ra$.  To choose $n_0$, notice that the set $Z$ of elements that are $\preceq_\mc{L}$\nobreakdash-below any of $0, 1, \dots, \la p, N \ra$  is finite because $\mc{L} \iso \omega$.  Then $(\forae n \in C)(\varphi_{\ell(p)}(n) \notin Z)$ because $\lim_{n \in C}\varphi_{\ell(p)}(n)  = \infty$.

Suppose that $n \in C$ and $n \geq n_0$, and furthermore suppose for a contradiction that there is a $d < \max_<\{\varphi_{\ell(p)}(n), \varphi_{r(p)}(n)\}$ such that there is no $k$ with $\varphi_{\ell(p)}(n) \prec_\mc{L} k \prec_\mc{L} \varphi_{r(p)}(n)$ and $F(k) = d$.  Then conditions \ref{it:tot}--\ref{it:priority} are satisfied by $(a,b,n)$ at all sufficiently large stages $s$.  Condition~\ref{it:tot} is satisfied because $\varphi_{\ell(p)}$ and $\varphi_{r(p)}$ are total.  Condition~\ref{it:checkA} is satisfied because $n \geq N$ and $n \in C$.  Condition~\ref{it:checkW} is satisfied by the choice of $N$.  Condition~\ref{it:gap} is satisfied by the assumption that there is no $k$ with $\varphi_{\ell(p)}(n) \prec_\mc{L} k \prec_\mc{L} \varphi_{r(p)}(n)$ and $F(k) = d$ and hence there is no such $k$ at every stage $s$ in which both $\varphi_{\ell(p)}(n)$ and $\varphi_{r(p)}(n)$ are present in $X_s$.  Condition~\ref{it:priority} is satisfied by the choice of $n_0$.  Each $m < n$ can be the action input for $\la p, N \ra$ at most once, and, at sufficiently large stages, $(a,b)$ is the only possible action sides for $\la p, N \ra$.  Thus at some stage the pair $\la p, N \ra$ eventually demands action with action witness $(a,b,n)$.  The action of $\la p, N \ra$ defines $\varphi_{\ell(p)}(n) \prec_\mc{L} k \prec_\mc{L} \varphi_{r(p)}(n)$ and $F(k) = d$ for some $k$, which contradicts that there is no such $k$.  This shows that~\eqref{eq-MakeDense} holds for $\varphi = \varphi_{r(p)}$ and $\psi = \varphi_{\ell(p)}$, which completes the proof.
\end{proof}

Let $C$ be a co-c.e.\ cohesive set, and, by Theorem~\ref{thm-ColorsDenseNonstd}, let $\mc{O} =  (L, \Nb, \prec_\mc{L}, F)$ be a computable colored copy of $\omega$ for which $\prod_C \mc{O}$ is colorful.  Then $\mc{L} = (L, \prec_\mc{L})$ is an example of a computable copy of $\omega$ with $\prod_C \mc{L} \iso \omega + \eta$.

\begin{Corollary}\label{cor-DenseNonstd}
Let $C$ be a co-c.e.\ cohesive set.  Then there is a computable copy $\mc{L}$ of $\omega$ where the cohesive power $\prod_C \mc{L}$ has order-type $\omega + \eta$.
\end{Corollary}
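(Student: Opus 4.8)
The plan is to apply Theorem~\ref{thm-ColorsDenseNonstd} to the given co-c.e.\ cohesive set $C$, obtaining a computable copy $\mc{L} = (\Nb, \prec_\mc{L})$ of $\omega$ together with a computable coloring $F \colon \Nb \imp \Nb$, and then to verify directly that $\Pi_C \mc{L}$ has order-type $\omega + \eta$. By Lemma~\ref{lem-StdInitSeg}, the standard elements of $\Pi_C \mc{L}$ form an initial segment of order-type $\omega$, so $\Pi_C \mc{L} \iso \omega + \mc{M}$, where $\mc{M}$ denotes the suborder of $\Pi_C \mc{L}$ on its non-standard elements. It therefore suffices to show that $\mc{M}$ is a non-empty countable dense linear order without endpoints, since every such order has order-type $\eta$.

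First I would note that $\mc{M}$ is non-empty and countable: any total computable bijection $\Nb \imp \Nb$ represents a non-standard element of $\Pi_C \mc{L}$ by Lemma~\ref{lem-NonstdUnbdd}, and $\Pi_C \mc{L}$ is countable because $\mc{L}$ is an infinite computable structure. Next, $\mc{M}$ has no endpoints by Lemma~\ref{lem-NonStdNoEnd}, which gives, for every non-standard $[\varphi]$, non-standard elements $[\psi^-] \pprec_{\Pi_C \mc{L}} [\varphi] \pprec_{\Pi_C \mc{L}} [\psi^+]$ and in particular $[\psi^-] \prec_{\Pi_C \mc{L}} [\varphi] \prec_{\Pi_C \mc{L}} [\psi^+]$. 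Finally, for density, given non-standard $[\psi] \prec_{\Pi_C \mc{L}} [\varphi]$, I would apply Theorem~\ref{thm-ColorsDenseNonstd} with the solid color $\llb \delta \rrb$ represented by the constant function with value $0$; this yields a $[\theta]$ in $\Pi_C \mc{L}$ with $[\psi] \prec_{\Pi_C \mc{L}} [\theta] \prec_{\Pi_C \mc{L}} [\varphi]$, which (being strictly between two non-standard elements) is itself non-standard. Hence $\mc{M} \iso \eta$ and $\Pi_C \mc{L} \iso \omega + \eta$.

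There is essentially no obstacle here, as all of the substantive work was carried out in the construction underlying Theorem~\ref{thm-ColorsDenseNonstd}. The one point that deserves emphasis is that Theorem~\ref{thm-ColorsDenseNonstd} supplies an element strictly between \emph{any} two distinct non-standard elements of $\Pi_C \mc{L}$ — not merely between two lying in different blocks of the finite condensation — so that in fact every non-standard block of $\Pi_C \mc{L}$ is a singleton. This is exactly what separates $\omega + \eta$ from $\omega + \zeta\eta$, and it refines Theorem~\ref{thm-CoCeDenseCond}, which by itself only records that $\condF(\Pi_C \mc{L}) \iso \bm{1} + \eta$.
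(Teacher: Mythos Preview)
Your proof is correct and follows essentially the same approach as the paper: take the $\mc{L}$ from Theorem~\ref{thm-ColorsDenseNonstd}, use Lemma~\ref{lem-StdInitSeg} for the initial $\omega$-segment, Lemma~\ref{lem-NonStdNoEnd} for the absence of endpoints among non-standard elements, and Theorem~\ref{thm-ColorsDenseNonstd} for density. Your version is slightly more explicit (spelling out non-emptiness and countability, and naming a specific solid color), and your closing remark about singleton non-standard blocks is a nice gloss, but the argument is the same.
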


\begin{proof}
Let $C$ be co-c.e.\ and cohesive.  Let $\mc{O} =  (L, \Nb, \prec_\mc{L}, F)$ be the computable colored copy of $\omega$ from Theorem~\ref{thm-ColorsDenseNonstd} for $C$.  Let $\mc{L}$ denote the computable copy $\mc{L} = (L, \prec_\mc{L})$ of $\omega$.  The cohesive power $\prod_C \mc{L}$ has an initial segment of order-type $\omega$ by Lemma~\ref{lem-StdInitSeg}.  There is neither a least nor greatest non-standard element of $\prod_C \mc{L}$ by Lemma~\ref{lem-NonStdNoEnd}.  Theorem~\ref{thm-ColorsDenseNonstd} implies that the non-standard elements of $\prod_C \mc{L}$ are dense.  So $\prod_C \mc{L}$ consists of a standard part of order-type $\omega$ and a non-standard part that forms a countable dense linear order without endpoints.  So $\prod_C \mc{L} \iso \omega + \eta$.
\end{proof}

\begin{Example}\label{ex-NonElemEquiv}
Let $C$ be a co-c.e.\ cohesive set, and let $\mc{L}$ be a computable copy of $\omega$ with $\prod_C \mc{L} \iso \omega + \eta$ as in Corollary~\ref{cor-DenseNonstd}.

\begin{enumerate}[(1)]
\item\label{it-kxL} There is a countable collection of computable copies of $\omega$ whose cohesive powers over $C$ are pairwise non-elementarily equivalent.  Let $k \geq 1$, and let $\bm{k}$ denote the $k$\nobreakdash-element linear order $0 < 1 < \cdots < k-1$ as well as its order-type.  Then $\bm{k}\mc{L}$ has order-type $\omega$ because $\mc{L}$ has order-type $\omega$, and $\prod_C \bm{k} \iso \bm{k}$ by the discussion following Definition~\ref{def-CohProd}.  Using Theorem~\ref{thm-CohPres}, we calculate
\begin{align*}
\prod\nolimits_C(\bm{k}\mc{L}) \;\iso\; \bigl( \prod\nolimits_C \bm{k} \bigr) \bigl( \prod\nolimits_C \mc{L} \bigr) \;\iso\; \bm{k}(\omega + \eta) \;\iso\; \omega + \bm{k}\eta.
\end{align*}
The linear orders $\omega + \bm{k}\eta$ for $k \geq 1$ are pairwise non-elementarily equivalent.  The sentence ``there are $x_0 \prec \cdots \prec x_{k-1}$ such that every other $y$ satisfies either $y \prec x_0$ or $x_{k-1} \prec y$; if $y \prec x_0$, then there is a $z$ with $y \prec z \prec x_0$; and if $x_{k-1} \prec y$, then there is a $z$ with $x_{k-1} \prec z \prec y$'' expressing that there is a maximal block of size $k$ is true of $\omega + \bm{k}\eta$, but not of $\omega + \bm{m}\eta$ if $m \neq k$.  Thus $\bm{1}\mc{L}, \bm{2}\mc{L}, \dots$ is a sequence of computable copies of $\omega$ whose cohesive powers $\prod_C (\bm{k}\mc{L})$ are pairwise non-elementarily equivalent.

\medskip

\item It is possible for non-elementarily equivalent computable linear orders to have isomorphic cohesive powers.  Consider the computable linear orders $\mc{L}$ and $\mc{L} + \Qb$.  They are not elementarily equivalent because the sentence ``every element has an immediate successor'' is true of $\mc{L}$ but not of $\mc{L} + \Qb$.  However, using Theorem~\ref{thm-CohPres} and the fact that $\prod_C \Qb \iso \eta$, we calculate
\begin{align*}
\prod\nolimits_C(\mc{L} + \Qb) \;\iso\; \prod\nolimits_C \mc{L} + \prod\nolimits_C \Qb \;\iso\; (\omega + \eta) + \eta \;\iso\; \omega + \eta \;\iso\; \prod\nolimits_C \mc{L}.
\end{align*}
Thus the cohesive powers $\prod_C \mc{L}$ and $\prod_C(\mc{L} + \Qb)$ of $\mc{L}$ and $\mc{L} + \Qb$ are isomorphic.
\end{enumerate}
\end{Example}

\section{Shuffling finite linear orders into cohesive powers of \texorpdfstring{$\omega$}{omega}}\label{sec-Shuffle}

The goal of this section is to prove that if $X \subseteq \Nb \setminus \{0\}$ is a Boolean combination of $\Sigma_2$ sets, thought of as a set of finite order-types, and $C$ is a co-c.e.\ cohesive set, then there is a computable copy $\mc{L}$ of $\omega$ for which $\prod_C \mc{L}$ has order-type $\omega + \bm{\sigma}(X \cup \{\omega + \zeta\eta + \omega^*\})$.  Here $\bm{\sigma}$ denotes the \emph{shuffle} operation (see Definition~\ref{def-Shuffle} below).  We prove this in a modular way by abstracting the cohesive set away from the computable copy of $\omega$ being constructed.  The key technical step is Lemma~\ref{lem-ShuffleBcSig2}, which states that from a computable colored copy $\mc{O}$ of $\omega$ and a Boolean combination of $\Sigma_2$ sets $X \subseteq \Nb \setminus \{0\}$, we can construct a computable copy $\mc{L}$ of $\omega$ such that $\prod_C \mc{L} \;\iso\; \omega + \bm{\sigma}(X \cup \{\omega + \zeta\eta + \omega^*\})$ whenever $C$ is a cohesive set for which $\prod_C \mc{O}$ is colorful.  Combining Lemma~\ref{lem-ShuffleBcSig2} with Theorem~\ref{thm-ColorsDenseNonstd} then gives the desired result.

Given a linear order $\mc{L}$ and a sequence of linear orders $(\mc{M}_\ell : \ell \in |\mc{L}|)$ indexed by $|\mc{L}|$, the \emph{generalized sum} of $(\mc{M}_\ell : \ell \in |\mc{L}|)$ over $\mc{L}$ is obtained by replacing each element $\ell$ of $\mc{L}$ by a copy of $\mc{M}_\ell$.

\begin{Definition}[see~\cite{RosBook}*{Definition~1.38}]\label{def-GenSum}
Let $\mc{L}$ be a linear order, and let $(\mc{M}_\ell : \ell \in |\mc{L}|)$ be a sequence of linear orders indexed by $|\mc{L}|$.  The \emph{generalized sum} $\sum_{\ell \in |\mc{L}|}\mc{M}_\ell$ of $(\mc{M}_\ell : \ell \in |\mc{L}|)$ over $\mc{L}$ is the linear order $\mc{S} = (S, \prec_\mc{S})$ defined as follows.  Write $\mc{L} = (L, \prec_\mc{L})$, and write $\mc{M}_\ell = (M_\ell, \prec_{\mc{M}_\ell})$ for each $\ell \in L$.  Define $S = \{(\ell, m) : \ell \in L \,\andd\, m \in M_\ell\}$, and define
\begin{align*}
(\ell_0, m_0) \prec_{\mc{S}} (\ell_1, m_1) \quad\text{if and only if}\quad (\ell_0 \prec_\mc{L} \ell_1) \;\orr\; (\ell_0 = \ell_1 \,\andd\, m_0 \prec_{\mc{M}_{\ell_0}} m_1).
\end{align*}
\end{Definition}

Let $\mc{S} = \sum_{\ell \in |\mc{L}|}\mc{M}_\ell$ be the generalized sum of a sequence of linear orders $(\mc{M}_\ell : \ell \in |\mc{L}|)$ over a linear order $\mc{L}$ as in Definition~\ref{def-GenSum}.  Each $\mc{M}_\ell$ for $\ell \in |\mc{L}|$ corresponds to an interval of $\mc{S}$, which naturally gives rise to the \emph{sum condensation} of $\mc{S}$.  For $(\ell, m) \in |\mc{S}|$, let $\condSum((\ell, m)) = \{(x, y) \in |\mc{S}| : \ell = x\}$.  The \emph{sum condensation} $\condSum(\mc{S})$ is the condensation obtained from the partition $\{\condSum((\ell, m)) : (\ell, m) \in |\mc{S}|\}$.  Observe that $\condSum((\ell, m)) \iso \mc{M}_\ell$ for each $\ell \in |\mc{L}|$ and that $\condSum(\mc{S}) \iso \mc{L}$.

Generalized sums generalize both the sum and product constructions of Definition~\ref{def-SumProdRev}.  View the ordinary sum $\mc{L}_0 + \mc{L}_1$ as the generalized sum $\sum_{\ell \in |\bm{2}|}\mc{L}_\ell$ of $\mc{L}_0$ and $\mc{L}_1$ over the $2$\nobreakdash-element linear order $\bm{2} = (\{0,1\}, <)$; and view the product $\mc{L}_0 \mc{L}_1$ as the generalized sum $\sum_{\ell \in |\mc{L}_1|}\mc{L}_0$ of copies of $\mc{L}_0$ over $\mc{L}_1$.  We may also use generalized sums to define \emph{shuffles}.

The \emph{shuffle} $\bm{\sigma}(X)$ of an at-most-countable non-empty collection $X$ of linear orders is obtained by densely coloring $\Qb$ with colors from $X$ and then replacing each $q \in \Qb$ by its color.

\begin{Definition}[see~\cite{RosBook}*{Definition~7.14}]\label{def-Shuffle}
Let $X$ be a non-empty collection of linear orders with $|X| \leq \aleph_0$.  Let $f \colon \Qb \imp X$ be a function such that $f^{-1}(\mc{M})$ is dense in $\Qb$ for each linear order $\mc{M} \in X$.  Let $\mc{S} = \sum_{q \in \Qb} f(q)$ be the generalized sum of the sequence $(f(q) : q \in \Qb)$ over $\Qb$.  By density, the order-type of $\mc{S}$ does not depend on the particular choice of $f$.  Therefore $\mc{S}$ is called the \emph{shuffle} of $X$ and is denoted $\bm{\sigma}(X)$.
\end{Definition}

We usually think of $X$ in a shuffle $\bm{\sigma}(X)$ as a collection of order-types instead of as a collection of concrete linear orders.

Let $\mc{L}$ be a computable linear order, and let $(\mc{M}_\ell : \ell \in |\mc{L}|)$ be a uniformly computable sequence of linear orders.  Then one may use the pairing function to compute a copy of $\sum_{\ell \in |\mc{L}|}\mc{M}_\ell$.  Likewise, if $(\mc{M}_n : n \in \Nb)$ is a uniformly computable sequence of linear orders, then one may compute a function $f \colon \Qb \imp \Nb$ such that $f^{-1}(n)$ is dense in $\Qb$ for each $n \in \Nb$ and thereby compute a copy of $\bm{\sigma}(\{\mc{M}_n : n \in \Nb\})$.

Let $C$ be co-c.e.\ and cohesive, let $\mc{L}$ be the linear order from Corollary~\ref{cor-DenseNonstd} for $C$, and consider the linear order $\bm{2}\mc{L}$ from Example~\ref{ex-NonElemEquiv} item~\ref{it-kxL}.  We can think of $\bm{2}\mc{L}$ as being obtained from $\mc{L}$ by replacing each element of $\mc{L}$ by a copy of $\bm{2}$.  This operation of replacing each element by a copy of $\bm{2}$ is reflected in the cohesive power, and we have that $\prod_C (\bm{2}\mc{L}) \iso \omega + \bm{2}\eta$.

Again let $C$ be co-c.e.\ and cohesive, and now consider the computable colored copy $\mc{O} = (R, \Nb, \prec_\mc{R}, F)$ of $\omega$ from Theorem~\ref{thm-ColorsDenseNonstd}.  Let $\mc{R}$ denote $(R, \prec_\mc{R})$.  Collapse $F$ into a coloring $G \colon R \imp \{0,1\}$, where $G(r) = 0$ if $F(r) = 0$ and $G(r) = 1$ if $F(r) \geq 1$.  Then the coloring $G^{\prod_C \mc{O}}$ of $\prod_C \mc{R}$ induced by $G$ uses exactly two colors:  $\llb 0 \rrb$ represented by the constant function with value $0$, and $\llb 1 \rrb$ represented by the constant function with value $1$.  Both of these colors occur densely in the non-standard part of $\prod_C \mc{R}$.  Compute a linear order $\mc{L}$ by starting with $\mc{R}$, replacing each $r \in R$ with $G(r) = 0$ by a copy of $\bm{2}$, and replacing each $r \in R$ with $G(r) = 1$ by a copy of $\bm{3}$.  The cohesive power $\prod_C \mc{L}$ reflects this construction, and we get the linear order obtained from $\prod_C \mc{R}$ by replacing each point of $G^{\prod_C \mc{O}}$\nobreakdash-color $\llb 0 \rrb$ by a copy of $\bm{2}$ and replacing each point of $G^{\prod_C \mc{O}}$\nobreakdash-color $\llb 1 \rrb$ by a copy of $\bm{3}$.  Thus we have a computable copy $\mc{L}$ of $\omega$ with $\prod_C \mc{L} \iso \omega + \bm{\sigma}(\{\bm{2}, \bm{3}\})$.  Using this strategy, we can shuffle any finite collection of finite linear orders into a cohesive power of a computable copy of $\omega$.

To make the above argument precise and to generalize it to more complicated shuffles, we first show that cohesive powers of linear orders respect generalized sums.  Let $\mc{L}$ be a computable linear order, and let $(\mc{M}_\ell : \ell \in |\mc{L}|)$ be a uniformly computable sequence of linear orders indexed by $|\mc{L}|$.  We wish to show that for any cohesive set $C$,
\begin{align*}
\prod\nolimits_C \sum_{\ell \in |\mc{L}|}\mc{M}_\ell \quad\iso\quad \sum_{[\theta] \in \left|\prod_C \mc{L}\right|}\prod\nolimits_C \mc{M}_{\theta(n)}.
\end{align*}
To do this, we must first explain what we mean by the structure $\prod_C \mc{M}_{\theta(n)}$.  Intuitively, $\prod_C \mc{M}_{\theta(n)}$ is the cohesive product of the sequence of structures $\mc{M}_{\theta(0)}, \mc{M}_{\theta(1)}, \mc{M}_{\theta(2)}, \dots$ over $C$, where $\mc{M}_{\theta(n)}$ is undefined if $\theta(n)\ua$.

Formally, let $\mf{L}$ be a computable language, and let $(\mc{A}_n : n \in I)$ be a uniformly computable sequence of $\mf{L}$\nobreakdash-structures indexed by a computable set $I \subseteq \Nb$.  Let $C$ be a cohesive set, and let $\theta \colon \Nb \imp I$ be a partial computable function with $C \subseteq^* \dom(\theta)$.  Then $\prod_C \mc{A}_{\theta(n)}$ is defined as in Definition~\ref{def-CohProd}, except one now considers the $=_C$\nobreakdash-equivalence classes of partial computable functions $\varphi$ such that $\dom(\varphi) \subseteq \dom(\theta)$, $\forall n \, (\varphi(n)\da \,\imp\, \varphi(n) \in |\mc{A}_{\theta(n)}|)$, and $C \subseteq^* \dom(\varphi)$.  The results of Section~\ref{sec-ProdAndPow} hold for these generalized cohesive products of the form $\prod_C \mc{A}_{\theta(n)}$ with minor modifications to the proofs.  For example, one must now consider sets of the form $\bigl\{n : \mc{A}_{\theta(n)} \models \Phi(\varphi_0(n), \dots, \varphi_{m-1}(n))\bigr\}$ for various $\mf{L}$\nobreakdash-formulas $\Phi$ and partial computable functions $\varphi_0, \dots, \varphi_{m-1}$, where $\mc{A}_{\theta(n)}$ appears in place of $\mc{A}_n$.  If $\Phi$ is uniformly decidable in $(\mc{A}_n : n \in I)$, then the preceding set remains c.e.

If $\theta_0, \theta_1 \colon \Nb \imp I$ are two partial computable functions with $C \subseteq^* \dom(\theta_0)$, $C \subseteq^* \dom(\theta_1)$, and $\theta_0 =_C \theta_1$, then it is straightforward to show that $\prod_C \mc{A}_{\theta_0(n)} \iso \prod_C \mc{A}_{\theta_1(n)}$.  In fact, we are even justified in writing $\prod_C \mc{A}_{\theta_0(n)} = \prod_C \mc{A}_{\theta_1(n)}$ because every element of either structure can be represented by a partial computable $\varphi$ with $C \subseteq^* \dom(\varphi) \subseteq \dom(\theta_0) \cap \dom(\theta_1)$.  In particular, the structure $\sum_{[\theta] \in \left|\prod_C \mc{L}\right|}\prod_C \mc{M}_{\theta(n)}$ above is well-defined.

Lastly, we point out that if $C$ is a co-c.e.\ cohesive set with $C \subseteq^* \dom(\theta)$, then the generalized cohesive product $\prod_C \mc{A}_{\theta(n)}$ can be realized as a cohesive product of the form $\prod_C \mc{B}_n$.  The argument is similar to the argument that every element of a cohesive product by a co-c.e.\ cohesive set has a total computable representative.  Fix any computable $\mf{L}$\nobreakdash-structure $\mc{D}$.  Let $N$ be such that $(\forall n > N)(n \in C \imp \theta(n)\da)$.  Define the uniformly computable sequence of $\mf{L}$\nobreakdash-structures $(\mc{B}_n : n \in \Nb)$ by
\begin{align*}
\mc{B}_n = 
\begin{cases}
\mc{A}_{\theta(n)} & \text{if $n > N$ and $\theta(n)\da$ before $n$ is enumerated into $\ol{C}$}\\
\mc{D} & \text{otherwise}.
\end{cases}
\end{align*}
Then $\prod_C \mc{B}_n \iso \prod_C \mc{A}_{\theta(n)}$.  Again, we may even write $\prod_C \mc{B}_n = \prod_C \mc{A}_{\theta(n)}$ because every element of either structure can be represented by a partial computable $\varphi$ with $C \subseteq^* \dom(\varphi) \subseteq \dom(\theta)$.

We are now prepared to show that the cohesive power of a generalized sum is a generalized sum of cohesive products in the way indicated above.  The method of Theorem~\ref{thm-CohPres} becomes unwieldy in this situation because of the infinite sequence of structures to juggle, so we opt for a more hands-on proof.

\begin{Theorem}\label{thm-IsoGenSum}
Let $\mc{L}$ be a computable linear order, and let $(\mc{M}_\ell : \ell \in |\mc{L}|)$ be a uniformly computable sequence of linear orders indexed by $|\mc{L}|$.  Let $C$ be a cohesive set.  Then
\begin{align*}
\prod\nolimits_C \sum_{\ell \in |\mc{L}|}\mc{M}_\ell \quad\iso\quad \sum_{[\theta] \in \left|\prod_C \mc{L}\right|}\prod\nolimits_C \mc{M}_{\theta(n)}.
\end{align*}
\end{Theorem}

\begin{proof}
To ease notation, let
\begin{align*}
\mc{M} &= \sum_{\ell \in |\mc{L}|}\mc{M}_\ell\\ \\
\mc{X} &= \prod\nolimits_C \mc{L}\\ \\
\mc{Y}_{[\theta]_\mc{X}} &= \prod\nolimits_C \mc{M}_{\theta(n)} \quad\text{for each $[\theta]_\mc{X} \in |\mc{X}|$}\\ \\
\mc{A} &= \prod\nolimits_C \mc{M}\\ \\
\mc{B} &= \sum_{[\theta]_\mc{X} \in |\mc{X}|} \mc{Y}_{[\theta]_\mc{X}}.
\end{align*}
The goal is to show that $\mc{A} \iso \mc{B}$.  The elements of $\mc{A}$ are of the form $[\varphi]_\mc{A}$ for partial computable functions $\varphi$ with $\forall n \, (\varphi(n)\da \,\imp\, \varphi(n) \in |\mc{M}|)$ and $C \subseteq^* \dom(\varphi)$.  The elements of $\mc{B}$ are of the form $\bigl([\theta]_\mc{X}, [\tau]_{\mc{Y}_{[\theta]_\mc{X}}}\bigr)$ for partial computable functions $\theta$ and $\tau$ with $\forall n \, (\theta(n)\da \,\imp\, \theta(n) \in |\mc{L}|)$, $C \subseteq^* \dom(\theta)$, $\dom(\tau) \subseteq \dom(\theta)$, $\forall n \, (\tau(n)\da \,\imp\, \tau(n) \in |\mc{M}_{\theta(n)}|)$, and $C \subseteq^* \dom(\tau)$.

Define a function $F \colon |\mc{A}| \imp |\mc{B}|$ as follows.  For $[\varphi]_\mc{A} \in |\mc{A}|$, we have that $\varphi(n) \in |\mc{M}|$ and therefore that $\varphi(n) = \la \ell, m \ra$ for some $\ell \in |\mc{L}|$ and $m \in | \mc{M}_\ell |$ whenever $\varphi(n)\da$.  Let $\theta = \pi_0 \circ \varphi$, and let $\tau = \pi_1 \circ \varphi$.  Then $[\theta]_\mc{X} \in |\mc{X}|$ and $[\tau]_{\mc{Y}_{[\theta]_\mc{X}}} \in \mc{Y}_{[\theta]_\mc{X}}$.  Set $F([\varphi]_\mc{A}) = \bigl([\theta]_\mc{X}, [\tau]_{\mc{Y}_{[\theta]_\mc{X}}}\bigr)$.  To see that $F$ is well-defined, observe that if $\varphi =_C \psi$, then also $\pi_0 \circ \varphi =_C \pi_0 \circ \psi$ and $\pi_1 \circ \varphi =_C \pi_1 \circ \psi$.

To show that $F$ is an isomorphism, it suffices to show that $F$ is surjective and order-preserving by Lemma~\ref{lem-LOIso}.

For surjectivity, consider an element $\bigl([\theta]_\mc{X}, [\tau]_{\mc{Y}_{[\theta]_\mc{X}}}\bigr)$ of $\mc{B}$.  Define a partial computable $\varphi$ by $\varphi(n) \keq \la \theta(n), \tau(n) \ra$.  Then $\varphi(n) \in |\mc{M}|$ whenever $\varphi(n)\da$, and $C \subseteq^* \dom(\varphi)$ because $C \subseteq^* \dom(\theta) \cap \dom(\tau)$.  It follows that $[\varphi]_\mc{A} \in |\mc{A}|$ and $F([\varphi]_\mc{A}) = \bigl([\theta]_\mc{X}, [\tau]_{\mc{Y}_{[\theta]_\mc{X}}}\bigr)$.

For order-preserving, suppose that $[\varphi]_\mc{A}$ and $[\psi]_\mc{A}$ are members of $\mc{A}$ with $[\varphi]_\mc{A} \prec_\mc{A} [\psi]_\mc{A}$.  Then $(\forae n \in C)(\varphi(n) \prec_\mc{M} \psi(n))$.  Write $\theta = \pi_0 \circ \varphi$, $\tau = \pi_1 \circ \varphi$, $\alpha = \pi_0 \circ \psi$, and $\beta = \pi_1 \circ \psi$.  By the definition of $\mc{M}$,
\begin{align*}
(\forae n \in C)\Bigl( \bigl( \theta(n) \prec_\mc{L} \alpha(n) \bigr) \;\orr\; \bigl( \theta(n) = \alpha(n) \,\andd\, \tau(n) \prec_{\mc{M}_{\theta(n)}} \beta(n) \bigr) \Bigr)
\end{align*}
Thus by cohesiveness, either
\begin{itemize}
\item $(\forae n \in C)\bigl( \theta(n) \prec_\mc{L} \alpha(n) \bigr)$ or

\smallskip

\item $(\forae n \in C)\bigl( \theta(n) = \alpha(n) \,\andd\, \tau(n) \prec_{\mc{M}_{\theta(n)}} \beta(n) \bigr)$.
\end{itemize}
In the first case, $[\theta]_\mc{X} \prec_\mc{X} [\alpha]_\mc{X}$.  In the second case, $[\theta]_\mc{X} = [\alpha]_\mc{X}$ and $[\tau]_{\mc{Y}_{[\theta]_\mc{X}}} \prec_{\mc{Y}_{[\theta]_\mc{X}}} [\beta]_{\mc{Y}_{[\theta]_\mc{X}}}$.  Thus in either case,
\begin{align*}
F([\varphi]_\mc{A}) = \bigl( [\theta]_\mc{X}, [\tau]_{\mc{Y}_{[\theta]_\mc{X}}} \bigr) \prec_\mc{B} \bigl( [\alpha]_\mc{X}, [\beta]_{\mc{Y}_{[\alpha]_\mc{X}}} \bigr) = F([\psi]_\mc{A}),
\end{align*}
as desired.
\end{proof}

Notice that Theorem~\ref{thm-CohPres} items~\ref{it-SumIso} and~\ref{it-ProdIso} follow from Theorem~\ref{thm-IsoGenSum} by viewing ordinary sums and products of linear orders as generalized sums of linear orders.

We now show how to shuffle finitely many finite order-types into the cohesive power of a computable copy of $\omega$.

\begin{Lemma}\label{lem-FixedFiniteProd}
Let $(\mc{M}_n : n \in I)$ be a uniformly computable sequence of linear orders indexed by a computable $I \subseteq \Nb$, and let $M_n$ denote the domain of $\mc{M}_n$ for each $n \in I$.  Let $C$ be a cohesive set.  Let $\theta \colon \Nb \imp I$ be a partial computable function with $C \subseteq^* \dom(\theta)$.  Suppose that there is a $k > 0$ such that $(\forae n \in C)(|M_{\theta(n)}| = k)$.  Then $\prod_C \mc{M}_{\theta(n)} \iso \bm{k}$.
\end{Lemma}

\begin{proof}
As explained at the beginning of Section~\ref{sec-ProdAndPow}, the property ``there are exactly $k$ distinct elements'' can be expressed by a $\Delta_2$ sentence $\Phi$.  We have that $C \subseteq^* \{n : \mc{M}_{\theta(n)} \models \Phi\}$ by assumption.  Therefore $\prod_C \mc{M}_{\theta(n)} \models \Phi$ by Theorem~\ref{thm-LosProdParam} item~\ref{it-LosProdPramDelta2}.  Thus $\prod_C \mc{M}_{\theta(n)}$ is a linear order with exactly $k$ elements.  So $\prod_C \mc{M}_{\theta(n)} \iso \bm{k}$.
\end{proof}

\begin{Lemma}\label{lem-ShuffleFinite}
Let $k_0, \dots, k_N$ be non-zero natural numbers, and let $\mc{O}$ be a computable colored copy of $\omega$.  There is a computable copy $\mc{L}$ of $\omega$ (constructed from $\mc{O}$) such that for every cohesive set $C$, if $\prod_C \mc{O}$ is colorful, then $\prod_C \mc{L}$ has order-type $\omega + \bm{\sigma}(\{\bm{k}_0, \dots, \bm{k}_N\})$.
\end{Lemma}

\begin{proof}
Let $\mc{O} = (R, \Nb, \prec_\mc{R}, F)$ be a computable colored copy of $\omega$, and let $\mc{R}$ denote $(R, \prec_\mc{R})$.  For $k > 0$, let $\bm{k} = (\{0, 1, \dots, k-1\}, <)$ denote the usual presentation of the $k$\nobreakdash-element linear order.  Let $(\mc{M}_r : r \in R)$ be the uniformly computable sequence of linear orders where $\mc{M}_r = \bm{k}_{F(r)}$ if $F(r) < N$ and $\mc{M}_r = \bm{k}_N$ if $F(r) \geq N$.  Let $\mc{L}$ be the generalized sum $\mc{L} = \sum_{r \in R} \mc{M}_r$.  The linear order $\mc{L}$ is obtained from the copy $\mc{R}$ of $\omega$ by replacing each element of $\mc{R}$ by a finite linear order.  Thus $\mc{L}$ is infinite, and every element has only finitely many predecessors.  So $\mc{L}$ is a computable copy of $\omega$.

Let $C$ be a cohesive set for which $\prod_C \mc{O}$ is colorful.  We need to show that $\prod_C \mc{L}$ has order-type $\omega + \bm{\sigma}(\{\bm{k}_0, \dots, \bm{k}_N\})$.

By Theorem~\ref{thm-IsoGenSum},
\begin{align*}
\prod\nolimits_C \mc{L} \quad=\quad \prod\nolimits_C \sum_{r \in R}\mc{M}_r \quad\iso\quad \sum_{[\theta] \in \left|\prod_C \mc{R}\right|}\prod\nolimits_C \mc{M}_{\theta(n)}.
\end{align*}
To ease notation, let $\mc{Z}$ denote the linear order $\sum_{[\theta] \in \left|\prod_C \mc{R}\right|}\prod_C \mc{M}_{\theta(n)}$.  Let $|\prod_C \mc{R}|_\std$ and $|\prod_C \mc{R}|_\nonstd$ denote the standard and non-standard parts of $\prod_C \mc{R}$, respectively.  Then let
\begin{align*}
\mc{Z}_\std \quad&=\quad \sum_{[\theta] \in \left|\prod_C \mc{R}\right|_\std}\prod\nolimits_C \mc{M}_{\theta(n)}\\ \\
\mc{Z}_\nonstd \quad&=\quad \sum_{[\theta] \in \left|\prod_C \mc{R}\right|_\nonstd}\prod\nolimits_C \mc{M}_{\theta(n)},
\end{align*}
so that $\mc{Z} \iso \mc{Z}_\std + \mc{Z}_\nonstd$.  Consider the sum condensation $\condSum(\mc{Z})$ of $\mc{Z}$.  We show that the order-type of the block $\prod_C \mc{M}_{\theta(n)}$ of the sum condensation corresponding to $[\theta] \in |\prod_C \mc{R}|$ is determined by the color $F^{\prod_C \mc{O}}([\theta])$ of $[\theta]$ in $\prod_C \mc{O}$.

\begin{ClaimFinShuffle}\label{claim-FinShuffSmall}
If $[\theta] \in |\prod_C \mc{R}|$ and $F^{\prod_C \mc{O}}([\theta])$ is solid color $\llb i \rrb$ for an $i < N$, then $\prod_C \mc{M}_{\theta(n)} \iso \bm{k}_i$.
\end{ClaimFinShuffle}

\begin{proof}[Proof of Claim~\ref{claim-FinShuffSmall}]
That $F^{\prod_C \mc{O}}([\theta]) = \llb i \rrb$ means that $(\forae n \in C)(F(\theta(n)) = i)$.  Therefore $\mc{M}_{\theta(n)} = \bm{k}_{F(\theta(n))} = \bm{k}_i$ for almost every $n \in C$ because $i < N$.  Thus $\prod_C \mc{M}_{\theta(n)} \iso \bm{k}_i$ by Lemma~\ref{lem-FixedFiniteProd}.
\end{proof}

\begin{ClaimFinShuffle}\label{claim-FinShuffBig}
If $[\theta] \in |\prod_C \mc{R}|$ and either $F^{\prod_C \mc{O}}([\theta])$ is solid color $\llb i \rrb$ for an $i \geq N$ or $F^{\prod_C \mc{O}}([\theta])$ is a striped color, then $\prod_C \mc{M}_{\theta(n)} \iso \bm{k}_N$.
\end{ClaimFinShuffle}

\begin{proof}[Proof of Claim~\ref{claim-FinShuffBig}]
If $F^{\prod_C \mc{O}}([\theta])$ is a striped color, then $\lim_{n \in C} F(\theta(n)) = \infty$.  Therefore $(\forae n \in C)(F(\theta(n)) \geq N)$ in both cases, so $\mc{M}_{\theta(n)} = \bm{k}_N$ for almost every $n \in C$.  Thus $\prod_C \mc{M}_{\theta(n)} \iso \bm{k}_N$ by Lemma~\ref{lem-FixedFiniteProd}.
\end{proof}

Notice that block $\prod_C \mc{M}_{\theta(n)}$ is a finite linear order for every $[\theta] \in |\prod_C \mc{R}|$.  Thus $\mc{Z}_\std$ is a generalized sum of finite linear orders over the copy $|\prod_C \mc{R}|_\std$ of $\omega$, so $\mc{Z}_\std \iso \omega$.

Think of the sum condensation $\condSum(\mc{Z}_\nonstd)$ as being colored by $F^{\prod_C \mc{O}}$, where the block $\prod_C \mc{M}_{\theta(n)}$ corresponding to $[\theta] \in |\prod_C \mc{R}|_\nonstd$ gets color $F^{\prod_C \mc{O}}([\theta])$.  The product $\prod_C \mc{O}$ is colorful, which means that $\condSum(\mc{Z}_\nonstd) \iso |\prod_C \mc{R}|_\nonstd \iso \eta$ and that each solid color occurs densely.  By Claims~\ref{claim-FinShuffSmall} and~\ref{claim-FinShuffBig}, the order-type of block $\prod_C \mc{M}_{\theta(n)}$ for $[\theta] \in |\prod_C \mc{R}|_\nonstd$ is:
\begin{itemize}
\item $\bm{k}_i$ if $[\theta]$ has solid color $\llb i \rrb$ for an $i < N$;

\medskip

\item $\bm{k}_N$ if $[\theta]$ has either solid color $\llb i \rrb$ for an $i \geq N$ or a striped color.
\end{itemize}
Therefore $\mc{Z}_\nonstd \;\iso\; \bm{\sigma}(\{\bm{k}_0, \dots, \bm{k}_N\})$.  Thus
\begin{align*}
\prod\nolimits_C \mc{L} \quad\iso\quad \mc{Z} \quad\iso\quad \mc{Z}_\std + \mc{Z}_\nonstd \quad\iso\quad \omega + \bm{\sigma}(\{\bm{k}_0, \dots, \bm{k}_N\})
\end{align*}
as desired.
\end{proof}

To handle more complicated patterns of shuffles, we consider sequences of finite linear orders in which we know the successor relation, we know the least element, we do not necessarily know the greatest element, but we do know that there are at most three elements that the greatest element could be.  Expand the language of linear orders to $\mf{O} = \{\prec, S, B, T_0, T_1, T_2\}$, where $S$ is a binary relation symbol and $B$, $T_0$, $T_1$, and $T_2$ are unary relation symbols.  Our intent is to describe finite linear orders with immediate successor relation $S$, least element given by $B$, and greatest element given by either $T_0$, $T_1$, or $T_2$.  Thus $S$ stands for `successor,' $B$ stands for `bottom,' and $T$ stands for `top.'

Let $\Gamma$ be the set of $\mf{O}$\nobreakdash-sentences consisting of the linear order axioms from the beginning of Section~\ref{sec-LOandPow} along with the following sentences.
\begin{itemize}
\item $\forall x \forall y \, (S(x,y) \;\biimp\; \text{$y$ is the $\prec$\nobreakdash-immediate successor of $x$})$\\
I.e., $S(x,y)$ describes the immediate successor relation.

\medskip

\item $\forall x \, (\exists y \, (x \prec y) \;\imp\; \exists y \, S(x,y))$\\
I.e., every element except the last element has an immediate successor.

\medskip

\item $\forall x \, (\exists y \, (y \prec x) \,\imp\, \exists y \, S(y,x))$\\
I.e., every element except the first element has an immediate predecessor.

\medskip

\item $\exists x \, B(x)$\\
I.e., $B(x)$ holds for some $x$.

\medskip

\item $\forall x \forall y \, (B(x) \,\imp\, x \preceq y)$\\
I.e., if $B(x)$ holds, then $x$ is least.

\medskip

\item $\exists x \, T_0(x)$\\
I.e., $T_0(x)$ holds for some $x$.

\medskip

\item $\exists x \, T_2(x) \;\imp\; \exists x \, T_1(x)$\\
I.e., if $T_2(x)$ holds for some $x$, then $T_1(x)$ holds for some $x$.

\medskip

\item For each $i \leq 2$, $\neg\exists^{\geq 2} x \, T_i(x)$\\
I.e., for each $i \leq 2$, there is at most one $x$ for which $T_i(x)$ holds.

\medskip

\item $\forall x \forall y \, (T_2(x) \,\imp\, y \preceq x)$\\
I.e., if $T_2(x)$ holds for $x$, then $x$ is greatest.

\medskip

\item $\neg\exists x\, T_2(x) \;\imp\; \forall x \forall y \, (T_1(x) \,\imp\, y \preceq x)$\\
I.e., if $T_2(x)$ does not hold for any element but $T_1(x)$ holds for $x$, then $x$ is greatest.

\medskip

\item $\neg\exists x\, T_1(x) \;\imp\; \forall x \forall y \, (T_0(x) \,\imp\, y \preceq x)$\\
I.e., if $T_1(x)$ does not hold for any element but $T_0(x)$ holds for $x$, then $x$ is greatest.
\end{itemize}

Notice that from $\Gamma$ it can be deduced that there is a unique element satisfying $B(x)$ and a unique element satisfying $T_0(x)$.  So we could have used constant symbols in place of $B$ and $T_0$.  We prefer the symmetry of a relational language.  Also notice that from $\Gamma$ it can be deduced that there is a $\prec$\nobreakdash-least element and a $\prec$\nobreakdash-greatest element.  Finally, notice that every sentence in $\Gamma$ is equivalent to a $\Pi_2$ sentence.

When shuffling infinite collections of finite linear orders into a cohesive power of a computable copy of $\omega$, we start with a computable colored copy of $\omega$ and replace its elements by arbitrarily large finite linear orders.  If the finite linear orders can be uniformly computably expanded to models of $\Gamma$, then this replacement process naturally shuffles the linear order $\omega + \zeta\eta + \omega^*$ into the cohesive power.  Lemma~\ref{lem-SuffleZero}, which implies that $\omega + \bm{\sigma}(\{\omega + \zeta\eta + \omega^*\})$ can be achieved as the order-type of a cohesive power of a computable copy of $\omega$, serves as an example explaining this phenomenon.

\begin{Lemma}\label{lem-LimitFiniteProd}
Let $(\mc{M}_n : n \in I)$ be a uniformly computable sequence of $\mf{O}$\nobreakdash-structures that are all finite models of $\Gamma$, indexed by a computable $I \subseteq \Nb$.  Let $M_n$ denote the domain of $\mc{M}_n$ for each $n \in I$.  Let $C$ be a cohesive set.  Let $\theta \colon \Nb \imp I$ be a partial computable function with $C \subseteq^* \dom(\theta)$.  Suppose that $\lim_{n \in C} |M_{\theta(n)}| = \infty$.  Then, as a linear order, $\prod_C \mc{M}_{\theta(n)}$ has order-type $\omega + \zeta\eta + \omega^*$.
\end{Lemma}

\begin{proof}
We have that $\mc{M}_n \models \Gamma$ for each $n \in I$ by assumption and that each sentence of $\Gamma$ is equivalent to a $\Pi_2$ sentence.  Therefore $\prod_C \mc{M}_{\theta(n)} \models \Gamma$ by Theorem~\ref{thm-LosProdParam} item~\ref{it-LosProdPramPi2}.  Thus $\prod_C \mc{M}_{\theta(n)}$ has a $\prec_{\prod_C \mc{M}_{\theta(n)}}$\nobreakdash-least element and a $\prec_{\prod_C \mc{M}_{\theta(n)}}$\nobreakdash-greatest element (as these facts can be deduced from $\Gamma$), every element that is not $\prec_{\prod_C \mc{M}_{\theta(n)}}$\nobreakdash-least has a $\prec_{\prod_C \mc{M}_{\theta(n)}}$\nobreakdash-immediate successor, and every element that is not $\prec_{\prod_C \mc{M}_{\theta(n)}}$\nobreakdash-greatest has a $\prec_{\prod_C \mc{M}_{\theta(n)}}$\nobreakdash-immediate predecessor.  Furthermore, $\prod_C \mc{M}_{\theta(n)}$ is infinite because $\lim_{n \in C} |M_{\theta(n)}| = \infty$.  Thus as a linear order, $\prod_C \mc{M}_{\theta(n)}$ must consist of an initial block of order-type $\omega$, a final block of order-type $\omega^*$, and intermediate blocks of order-type $\zeta$.  We show that the blocks of $\prod_C \mc{M}_{\theta(n)}$ are dense.

If $C$ were co-c.e.\ or the sequence $(\mc{M}_n : n \in I)$ were uniformly $1$\nobreakdash-decidable, then we could use a saturation argument to conclude that the blocks of $\prod_C \mc{M}_{\theta(n)}$ are dense.  However, $C$ is not necessarily co-c.e., and, although each individual structure $\mc{M}_n$ is finite and hence decidable, the sequence $(\mc{M}_n : n \in I)$ need not be uniformly $1$\nobreakdash-decidable.  We therefore resort to an \emph{ad hoc} argument.

Let $[\varphi], [\psi] \in |\prod_C \mc{M}_{\theta(n)}|$ be such that $[\psi] \pprec_{\prod_C \mc{M}_{\theta(n)}} [\varphi]$.  Then $\lim_{n \in C}|(\psi(n), \varphi(n))_{\mc{M}_{\theta(n)}}| = \infty$ by Lemma~\ref{lem-DifferentBlocks}.  If $\theta(n)\da$, $\varphi(n)\da$, $\psi(n)\da$, and $\psi(n) \prec_{\mc{M}_{\theta(n)}} \varphi(n)$, then we can effectively determine the size of the interval $(\psi(n), \varphi(n))_{\mc{M}_{\theta(n)}}$ as follows.  Search for $x_0, \dots, x_{k-1} \in M_{\theta(n)}$ such that $S^{\mc{M}_{\theta(n)}}(\psi(n), x_0)$ holds, such that $S^{\mc{M}_{\theta(n)}}(x_i, x_{i+1})$ holds for each $i < k-1$, and such that $S^{\mc{M}_{\theta(n)}}(x_{k-1}, \varphi(n))$ holds.  Then $|(\psi(n), \varphi(n))_{\mc{M}_{\theta(n)}}| = k$.  Such a sequence $x_0, \dots, x_{k-1}$ exists because $\mc{M}_{\theta(n)}$ is finite and $S^{\mc{M}_{\theta(n)}}$ is the $\prec_{\mc{M}_{\theta(n)}}$\nobreakdash-immediate successor relation on $\mc{M}_{\theta(n)}$.

Define a partial computable function $\rho$ as follows.  Given $n$, if $\theta(n)\da$, $\varphi(n)\da$, $\psi(n)\da$, and $\psi(n) \prec_{\mc{M}_{\theta(n)}} \varphi(n)$, then determine the size $k$ of the interval $(\psi(n), \varphi(n))_{\mc{M}_{\theta(n)}}$ according to the procedure described above.  If $k > 0$, then locate the $\lceil \nicefrac{k}{2} \rceil$\textsuperscript{th} $\prec_{\mc{M}_{\theta(n)}}$\nobreakdash-least element $x$ of $(\psi(n), \varphi(n))_{\mc{M}_{\theta(n)}}$, and output $\rho(n) = x$.  Otherwise $\rho(n)\ua$.  As $\lim_{n \in C}|(\psi(n), \varphi(n))_{\mc{M}_{\theta(n)}}| = \infty$, it follows that both $\lim_{n \in C}|(\psi(n), \rho(n))_{\mc{M}_{\theta(n)}}| = \infty$ and $\lim_{n \in C}|(\rho(n), \varphi(n))_{\mc{M}_{\theta(n)}}| = \infty$.  Thus $[\psi] \pprec_{\prod_C \mc{M}_{\theta(n)}} [\rho] \pprec_{\prod_C \mc{M}_{\theta(n)}} [\varphi]$ again by Lemma~\ref{lem-DifferentBlocks}.  Therefore the blocks of $\prod_C \mc{M}_{\theta(n)}$ are dense.

As a linear order, $\prod_C \mc{M}_{\theta(n)}$ is countably infinite, has a least block of order-type $\omega$, has a greatest block of order-type $\omega^*$, has intermediate blocks of order-type $\zeta$, and the blocks are dense.  Thus $\prod_C \mc{M}_{\theta(n)}$ has order-type $\omega + \zeta\eta + \omega^*$ as a linear order.
\end{proof}

In Lemma~\ref{lem-LimitFiniteProd}, it is necessary that the structures $(\mc{M}_n : n \in I)$ satisfy additional assumptions (such as being models of $\Gamma$) beyond merely being finite linear orders.  For example, recall from Proposition~\ref{prop-LosProdParamTight} that a cohesive product of finite linear orders need not have a maximum element.

\begin{Lemma}\label{lem-SuffleZero}
Let $\mc{O}$ be a computable colored copy of $\omega$.  There is a computable copy $\mc{L}$ of $\omega$ (constructed from $\mc{O}$) such that for every cohesive set $C$, if $\prod_C \mc{O}$ is colorful, then $\prod_C \mc{L}$ has order-type $\omega + (\omega + \zeta\eta + \omega^*)\eta$, which is the same as $\omega + \bm{\sigma}(\{\omega + \zeta\eta + \omega^*\})$.
\end{Lemma}

\begin{proof}
Let $\mc{O} = (R, \Nb, \prec_\mc{R}, F)$ be a computable colored copy of $\omega$, and let $\mc{R}$ denote $(R, \prec_\mc{R})$.  Let $(\mc{M}_r : r \in R)$ be the uniformly computable sequence of linear orders where $\mc{M}_r = \bm{r+1}$ for each $r \in R$.  Let $\mc{L}$ be the generalized sum $\mc{L} = \sum_{r \in R} \mc{M}_r$.  Then $\mc{L}$ is a computable copy of $\omega$.

Let $C$ be a cohesive set for which $\prod_C \mc{O}$ is colorful.  We need to show that $\prod_C \mc{L}$ has order-type $\omega + (\omega + \zeta\eta + \omega^*)\eta$.

By Theorem~\ref{thm-IsoGenSum},
\begin{align*}
\prod\nolimits_C \mc{L} \quad=\quad \prod\nolimits_C \sum_{r \in R}\mc{M}_r \quad\iso\quad \sum_{[\theta] \in \left|\prod_C \mc{R}\right|}\prod\nolimits_C \mc{M}_{\theta(n)}.
\end{align*}
As in the proof of Lemma~\ref{lem-ShuffleFinite}, let $\mc{Z}$ denote $\sum_{[\theta] \in \left|\prod_C \mc{R}\right|}\prod_C \mc{M}_{\theta(n)}$; let $|\prod_C \mc{R}|_\std$ and $|\prod_C \mc{R}|_\nonstd$ denote the standard and non-standard parts of $\prod_C \mc{R}$; and let
\begin{align*}
\mc{Z}_\std \quad&=\quad \sum_{[\theta] \in \left|\prod_C \mc{R}\right|_\std}\prod\nolimits_C \mc{M}_{\theta(n)}\\ \\
\mc{Z}_\nonstd \quad&=\quad \sum_{[\theta] \in \left|\prod_C \mc{R}\right|_\nonstd}\prod\nolimits_C \mc{M}_{\theta(n)},
\end{align*}
so that $\mc{Z} \iso \mc{Z}_\std + \mc{Z}_\nonstd$.  The fact that $\prod_C \mc{O}$ is colorful implies that $|\prod_C \mc{R}|_\nonstd \iso \eta$.

If $[\theta] \in |\prod_C \mc{R}|_\std$, then there is an $r \in R$ such that $(\forae n \in C)(\theta(n) = r)$.  Therefore $(\forae n \in C)(\mc{M}_{\theta(n)} = \bm{r+1})$, so $\prod_C \mc{M}_{\theta(n)} \iso \bm{r+1}$ by Lemma~\ref{lem-FixedFiniteProd}.  This means that $\mc{Z}_\std$ is a generalized sum of finite linear orders over a copy of $\omega$, so $\mc{Z}_\std \iso \omega$.

If $[\theta] \in |\prod_C \mc{R}|_\nonstd$, then $\lim_{n \in C} \theta(n) = \infty$ by Lemma~\ref{lem-NonstdUnbdd}.  Letting $M_r$ denote the domain $\{0, 1, \dots, r\}$ of $\mc{M}_r$ for each $r \in R$, we have that $\lim_{n \in C}|M_{\theta(n)}| = \infty$.  Then $\prod_C \mc{M}_{\theta(n)} \iso \omega + \zeta\eta + \omega^*$ by Lemma~\ref{lem-LimitFiniteProd}.  To see that Lemma~\ref{lem-LimitFiniteProd} applies in this simplified situation, note that we can uniformly computably expand the linear orders $(\mc{M}_r : r \in R)$ to $\mf{O}$\nobreakdash-structures that are models of $\Gamma$.  For $\mc{M}_r = \bm{r+1}$, define $S$ to be the usual immediate successor relation, define $B(x)$ to hold exactly when $x = 0$, define $T_0(x)$ to hold exactly when $x = r$, and define $T_1(x)$ and $T_2(x)$ to hold of no element.

We just showed that $\prod_C \mc{M}_{\theta(n)} \iso \omega + \zeta\eta + \omega^*$ for each $[\theta] \in |\prod_C \mc{R}|_\nonstd$.  Therefore $\mc{Z}_\nonstd \;\iso\; \omega + (\omega + \zeta\eta + \omega^*)\eta \;\iso\; \omega + \bm{\sigma}(\{\omega + \zeta\eta + \omega^*\})$.  Thus
\begin{align*}
\prod\nolimits_C \mc{L} \quad\iso\quad \mc{Z} \quad\iso\quad \mc{Z}_\std + \mc{Z}_\nonstd \quad\iso\quad \omega + (\omega + \zeta\eta + \omega^*)\eta \quad\iso\quad \omega + \bm{\sigma}(\{\omega + \zeta\eta + \omega^*\}),
\end{align*}
as desired.
\end{proof}

We are finally ready to handle shuffles of the form $\bm{\sigma}(X \cup \{\omega + \zeta\eta + \omega^*\})$, where $X \subseteq \Nb \setminus \{0\}$ is a Boolean combination of $\Sigma_2$ sets thought of as a set of finite order-types.  We proceed in two steps.  Lemma~\ref{lem-ShuffleSig2CapPi2} handles the case where $X$ is the intersection of a $\Sigma_2$ set and a $\Pi_2$ set, and Lemma~\ref{lem-ShuffleBcSig2} extends the method to finite unions of such sets.

\begin{Lemma}\label{lem-ShuffleSig2CapPi2}
Let $X \subseteq \Nb \setminus \{0\}$ be the intersection of a $\Sigma_2$ set and a $\Pi_2$ set, thought of as a set of finite order-types.  Let $\mc{O}$ be a computable colored copy of $\omega$.  There is a computable copy $\mc{L}$ of $\omega$ (constructed from $\mc{O}$) such that for every cohesive set $C$, if $\prod_C \mc{O}$ is colorful, then $\prod_C \mc{L}$ has order-type $\omega + \bm{\sigma}(X \cup \{\omega + \zeta\eta + \omega^*\})$.
\end{Lemma}

\begin{proof}
The $X = \emptyset$ case is Lemma~\ref{lem-SuffleZero}, so we may assume that $X \neq \emptyset$.  Let $k_0$ be the least element of $X$.  Let $P$ and $Q$ be computable predicates for which
\begin{align*}
X = \{k : \exists a\, \forall b\, P(k, a, b)\} \cap \{k : \forall a\, \exists b\, Q(k, a, b)\}.
\end{align*}
Let $\mc{O} = (R, \Nb, \prec_\mc{R}, F)$ be a computable colored copy of $\omega$, and let $\mc{R}$ denote $(R, \prec_\mc{R})$.  We define a uniformly computable sequence $(\mc{M}_r : r \in R)$ of $\mf{O}$\nobreakdash-structures that are finite models of $\Gamma$ and have the following properties.  Let $M_r$ denote the domain of $\mc{M}_r$ for each $r \in R$.
\begin{enumerate}[(1)]
\item\label{it-KinX} If $k \in X$, then $(\forae r \in R)(F(r) = k \;\imp\; |M_r| = k)$.

\medskip

\item\label{it-SmallKoutX} If $k < k_0$, then $(\forae r \in R)(F(r) = k \;\imp\; |M_r| = r+1)$.

\medskip

\item\label{it-BigKoutX} If $k > k_0$ and $k \notin X$, then either $(\forae r \in R)(F(r) = k \;\imp\; |M_r| = k_0)$ or $(\forae r \in R)(F(r) = k \;\imp\; |M_r| = r+1)$.
\end{enumerate}
We then take $\mc{L}$ to be the generalized sum $\sum_{r \in R} (\mc{M}_r \rst {\prec})$ of the sequence $(\mc{M}_r : r \in R)$, viewed as a sequence of finite linear orders, over the linear order $\mc{R}$.

To compute $\mc{M}_r = (M_r, \prec, S, B, T_0, T_1, T_2)$, first initialize $\mc{M}_r$ on $\{0, \dots, k_0 - 1\}$ as follows:
\begin{itemize}
\item $\{0, \dots, k_0 - 1\} \subseteq M_r$;

\smallskip

\item $\prec$ agrees with the usual order $<$ on $\{0, \dots, k_0 - 1\}$;

\smallskip

\item $S$ is the usual successor relation on $\{0, \dots, k_0 - 1\}$;

\smallskip

\item $B(x)$ holds if and only if $x = 0$;

\smallskip

\item $T_0(x)$ holds if and only if $x = k_0 - 1$;

\smallskip

\item neither $T_1(x)$ nor $T_2(x)$ hold of any $x \in \{0, \dots, k_0 - 1\}$.
\end{itemize}
If $F(r) = k_0$, or if $F(r) < k_0$ and $r < k_0$, then define $x \notin M_r$ for all $x \geq k_0$.  In this case, $\mc{M}_r$ is the usual presentation of the linear order $\bm{k}_0$ expanded by $S$, $B$, $T_0$, $T_1$, and $T_2$ as described above.  It is straightforward to check that $\mc{M}_r \models \Gamma$.

If $F(r) < k_0$ and $r \geq k_0$, then add $k_0, \dots, r$ to $M_r$ so that $M_r = \{0, \dots, r\}$.  Extend $\prec$ to agree with the usual order on $\{0, \dots, r\}$, and extend $S$ to be the corresponding successor relation.  Define $T_1(x)$ to hold if and only if $x = r$, and define $T_2(x)$ to hold of no $x \in M_r$.  Define $x \notin M_r$ for all $x > r$.  In this case, $\mc{M}_r$ is the usual presentation of the linear order $\bm{r+1}$ expanded by $S$, $B$, $T_0$, $T_1$, and $T_2$, where $T_0(x)$ holds only of $k_0 - 1$, $T_1(x)$ holds only of $r$, and $T_2(x)$ holds of no element.  It is again straightforward to check that $\mc{M}_r \models \Gamma$.

If $F(r) > k_0$, then compute $\mc{M}_r$ in stages.  At all stages $s$, we maintain that $\mc{M}_r \models \Gamma$ and that $\prec$ agrees with the usual order $<$ on the elements of $\mc{M}_r$.  For every $x$, we decide whether or not $x \in M_r$ at stage $x$ at the latest.  The initialization of $\mc{M}_r$ on $\{0, \dots, k_0 - 1\}$ described above counts as stage $0$, so $\mc{M}_r \models \Gamma$ at the end of stage $0$.  Proceed as follows at stage $s > 0$.  If it has not yet been decided whether $s \in M_r$ by the beginning of stage $s$, then define $s \notin M_r$.  Then act according to the following cases.

\begin{itemize}
\item[Case~1:] $M_r$ is still $\{0, \dots, k_0 - 1\}$ at the beginning of stage $s$, $(\exists a < r)(\forall b < s)\, P(F(r), a, b)$ holds, and $(\forall a < r)(\exists b < s)\, Q(F(r), a, b)$ holds.  In this case, let $s < x_0 < x_1 < \dots < x_{F(r) - k_0 - 1}$ be the $F(r) - k_0$ least numbers $x > s$ for which it has not yet been decided whether $x \in M_r$.  Add $x_0, \dots, x_{F(r) - k_0 - 1}$ to $M_r$ so that $M_r = \{0, \dots, k_0 - 1, x_0, \dots, x_{F(r) - k_0 - 1}\}$.  Extend $\prec$ to agree with the usual order on $\{0, \dots, k_0 - 1, x_0, \dots, x_{F(r) - k_0 - 1}\}$, and extend $S$ to be the corresponding successor relation.  So $S(k_0 - 1, x_0)$ holds, and $S(x_i, x_{i+1})$ holds for all $i < F(r) - k_0 - 1$.  Finally, define $T_1(x)$ to hold if and only if $x =  x_{F(r) - k_0 - 1}$, and define $T_2(x)$ to hold of no $x \in M_r$.  Go on to stage $s+1$.  Observe that $|M_r| = F(r)$ at the end of stage $s$.

To see that $\mc{M}_r \models \Gamma$ at the end of stage $s$, observe that no elements were added to $\mc{M}_r$ between its initialization and the start of stage $s$.  Thus at the start of stage $s$, $T_0(x)$ holds of one element, and $T_1(x)$ and $T_2(x)$ hold of no element.  During stage $s$, a new greatest element $x_{F(r) - k_0 - 1}$ is added, and $T_1(x)$ is defined to hold of exactly this element.  Also, $T_2(x)$ still holds of no element at the end of stage $s$.  One may now check that $\mc{M}_r \models \Gamma$.

\medskip

\item[Case~2:] $(\forall a < r)(\exists b < s)\, \neg P(F(r), a, b)$ holds and $|M_r| \leq r$ at the beginning of stage $s$.  In this case, let $m = |M_r|$, and let $\ell_0 \prec \ell_1 \prec \cdots \prec \ell_{m-1}$ be the elements of $M_r$ listed in $\prec$\nobreakdash-increasing order.  Recall that $\prec$ and $<$ agree on $M_r$, so also $\ell_0 < \ell_1 < \cdots < \ell_{m-1}$.  Let $s < x_0 < x_1 < \cdots < x_{r-m}$ be the $r+1 - m$ least numbers $x > s$ for which it has not yet been decided whether $x \in M_r$.  Notice that $x_0 > \ell_{m-1}$ as well, as otherwise $\ell_{m-1}$ would not have been least when it was added to $M_r$.  Add $x_0, \dots, x_{r-m}$ to $M_r$ so that $M_r = \{\ell_0, \dots, \ell_{m-1}, x_0, \dots, x_{r-m}\}$.  Extend $\prec$ to agree with the usual order on $\{\ell_0, \dots, \ell_{m-1}, x_0, \dots, x_{r-m}\}$, and extend $S$ to be the corresponding successor relation.  So $S(\ell_{m-1}, x_0)$ holds, and $S(x_i, x_{i+1})$ holds for all $i < r-m$.  If there is no $x$ for which $T_1(x)$ holds, then define $T_1(x)$ to hold if and only if $x =  x_{r-m}$ and define $T_2(x)$ to hold of no $x \in M_r$.  If there is already an $x$ for which $T_1(x)$ holds, then define $T_2(x)$ to hold if and only if $x =  x_{r-m}$.  Go on to stage $s+1$.  Observe that $|M_r| = r+1$ at the end of stage $s$.

Notice that Case~1 and Case~2 can occur at most one time each.  After Case~1 occurs, $M_r$ is never again $\{0, \dots, k_0 - 1\}$, so Case~1 never occurs again.  After Case~2 occurs, we never again have $|M_r| \leq r$, so Case~2 never occurs again.  Thus prior to stage $s$, Case~2 cannot have occurred (as it is occurring now at stage $s$), and Case~1 can have occurred at most once.  If Case~1 did not occur before stage $s$, then no elements were added to $\mc{M}_r$ between its initialization and the start of stage $s$.  The situation is then analogous to that of Case~1.  We define $T_1(x)$ to hold of exactly the new greatest element $x_{r-m}$ that is added at stage $s$, and we define $T_2(x)$ to hold of no element.  If instead Case~1 did occur before stage $s$, then at the start of stage $s$, $T_0(x)$ and $T_1(x)$ hold of exactly one element each, and $T_2(x)$ holds of no element.  During stage $s$, a new greatest element $x_{r-m}$ is added, and $T_2(x)$ is defined to hold of exactly this element.  In either situation, one may check that $\mc{M}_r \models \Gamma$.

\medskip

\item[Case~3:]  If neither Case~1 nor Case~2 applies, then do nothing more at stage $s$ and go on to stage $s+1$.  Then $\mc{M}_r \models \Gamma$ at the end of stage $s$ because $\mc{M}_r \models \Gamma$ at the start of stage $s$.
\end{itemize}

This concludes the construction of $(\mc{M}_r : r \in R)$.  In the computation of $\mc{M}_r$ for a given $r \in R$, Case~1 and Case~2 can occur at most once each, as observed in the discussion of Case~2 above.  Therefore elements are added to $\mc{M}_r$ at most twice, so it is finite.  It also follows that $\mc{M}_r \models \Gamma$ at the end of the construction because $\mc{M}_r \models \Gamma$ at every stage.

We show that the above items~\ref{it-KinX}--\ref{it-BigKoutX} hold.

For item~\ref{it-KinX}, consider a $k \in X$.  If $k = k_0$, then $|M_r| = k$ whenever $r \in R$ and $F(r) = k$.  Suppose instead that $k > k_0$.  Both $\exists a\, \forall b\, P(k, a, b)$ and $\forall a\, \exists b\, Q(k, a, b)$ hold because $k \in X$.  Suppose that $r \in R$ has $F(r) = k$ and is large enough so that $(\exists a < r)(\forall b)\, P(k, a, b)$.  Then Case~2 never occurs in the computation of $\mc{M}_r$ because $(\forall a < r)(\exists b < s)\, \neg P(F(r), a, b)$ always fails.  On the other hand, Case~1 occurs at the first stage $s$ such that $(\forall a < r)(\exists b < s)\, Q(F(r), a, b)$.  Therefore $|M_r| = k$.  This shows that $(\forae r \in R)(F(r) = k \;\imp\; |M_r| = k)$.

For item~\ref{it-SmallKoutX}, consider a $k < k_0$.  Then $|M_r| = r+1$ whenever $r \in R$, $r \geq k_0$, and $F(r) = k$.  Thus $(\forae r \in R)(F(r) = k \;\imp\; |M_r| = r+1)$.

For item~\ref{it-BigKoutX}, consider a $k > k_0$ with $k \notin X$.  Then either $\forall a\, \exists b\, \neg P(k, a, b)$ or $\exists a\, \forall b\, \neg Q(k, a, b)$.  First suppose that $\forall a\, \exists b\, \neg P(k, a, b)$ holds.  Let $r \in R$ have $F(r) = k$ and $r \geq k$.  Let $s$ be the first stage at which $(\forall a < r)(\exists b < s)\, \neg P(F(r), a, b)$ in the computation of $\mc{M}_r$.  Then $|M_r| \leq r$ at the beginning of stage $s$.  This is because Case~2 cannot have occurred before stage $s$ by the choice of $s$, so either $|M_r| = k_0 < r$ (if Case~1 has not occurred by stage $s$) or $|M_r| = k \leq r$ (if Case~1 has occurred by stage $s$).  Thus Case~2 occurs at stage $s$, so $|M_r| = r+1$ at the end of stage $s$.  Neither Case~1 nor Case~2 occurs after stage $s$ because Case~2 occurs at most once and Case~1 cannot occur after Case~2.  Thus $|M_r| = r+1$ at the end of the construction.  This shows that $(\forae r \in R)(F(r) = k \;\imp\; |M_r| = r+1)$.

Finally, suppose that $\exists a\, \forall b\, \neg Q(k, a, b)$ holds but $\forall a\, \exists b\, \neg P(k, a, b)$ fails.  Thus both $\exists a\, \forall b\, P(k, a, b)$ and $\exists a\, \forall b\, \neg Q(k, a, b)$ hold.  Suppose that $r \in R$ has $F(r) = k$ and is large enough so that $(\exists a < r)(\forall b)\, P(F(r), a, b)$ and $(\exists a < r)(\forall b)\, \neg Q(F(r), a, b)$.  Then neither Case~1 nor Case~2 occur at any stage in the computation of $\mc{M}_r$.  Thus $|M_r| = k_0$.  This shows that $(\forae r \in R)(F(r) = k \;\imp\; |M_r| = k_0)$ and completes the argument that item~\ref{it-BigKoutX} holds.

Let $\mc{L}$ be the generalized sum $\sum_{r \in R} (\mc{M}_r \rst {\prec})$ of the sequence $(\mc{M}_r : r \in R)$, viewed as a sequence of finite linear orders, over the linear order $\mc{R}$ as indicated above.  Then $\mc{L}$ is a computable copy of $\omega$.  Let $C$ be a cohesive set for which $\prod_C \mc{O}$ is colorful.  We need to show that $\prod_C \mc{L}$ has order-type $\omega + \bm{\sigma}(X \cup \{\omega + \zeta\eta + \omega^*\})$.

Theorem~\ref{thm-IsoGenSum} gives us that, as linear orders,
\begin{align*}
\prod\nolimits_C \mc{L} \quad=\quad \prod\nolimits_C \sum_{r \in R}\mc{M}_r \quad\iso\quad \sum_{[\theta] \in \left|\prod_C \mc{R}\right|}\prod\nolimits_C \mc{M}_{\theta(n)}.
\end{align*}
As in the proofs of Lemmas~\ref{lem-ShuffleFinite} and~\ref{lem-SuffleZero}, let $\mc{Z}$ denote the linear order $\sum_{[\theta] \in \left|\prod_C \mc{R}\right|}\prod_C \mc{M}_{\theta(n)}$; let $|\prod_C \mc{R}|_\std$ and $|\prod_C \mc{R}|_\nonstd$ denote the standard and non-standard parts of $\prod_C \mc{R}$; and let
\begin{align*}
\mc{Z}_\std \quad&=\quad \sum_{[\theta] \in \left|\prod_C \mc{R}\right|_\std}\prod\nolimits_C \mc{M}_{\theta(n)}\\ \\
\mc{Z}_\nonstd \quad&=\quad \sum_{[\theta] \in \left|\prod_C \mc{R}\right|_\nonstd}\prod\nolimits_C \mc{M}_{\theta(n)},
\end{align*}
so that $\mc{Z} \iso \mc{Z}_\std + \mc{Z}_\nonstd$.  We show that the order-type of the block $\prod_C \mc{M}_{\theta(n)}$ of $\condSum(\mc{Z}_\nonstd)$ corresponding to $[\theta] \in |\prod_C \mc{R}|_\nonstd$ is determined by the color $F^{\prod_C \mc{O}}([\theta])$ of $[\theta]$ in $\prod_C \mc{O}$.

\begin{ClaimInfShuffle}\label{claim-SolidInX}
If $[\theta] \in |\prod_C \mc{R}|_\nonstd$ and $F^{\prod_C \mc{O}}([\theta])$ is solid color $\llb k \rrb$ for a $k \in X$, then $\prod_C \mc{M}_{\theta(n)} \iso \bm{k}$ as a linear order.
\end{ClaimInfShuffle}

\begin{proof}[Proof of Claim~\ref{claim-SolidInX}]
We have that $\lim_{n \in C} \theta(n) = \infty$ by Lemma~\ref{lem-NonstdUnbdd} and that $(\forae n \in C)(F(\theta(n)) = k)$ by definition.  Furthermore, $k \in X$ implies that $(\forae r \in R)(F(r) = k \;\imp\; |M_r| = k)$ by item~\ref{it-KinX}.  Therefore $(\forae n \in C)(|M_{\theta(n)}| = k)$.  Thus $\prod_C \mc{M}_{\theta(n)} \iso \bm{k}$ by Lemma~\ref{lem-FixedFiniteProd}.
\end{proof}

\begin{ClaimInfShuffle}\label{claim-SolidSmall}
If $[\theta] \in |\prod_C \mc{R}|_\nonstd$ and $F^{\prod_C \mc{O}}([\theta])$ is solid color $\llb k \rrb$ with $k < k_0$, then $\prod_C \mc{M}_{\theta(n)} \iso \omega + \zeta\eta + \omega^*$ as a linear order.
\end{ClaimInfShuffle}

\begin{proof}[Proof of Claim~\ref{claim-SolidSmall}]
Again, $\lim_{n \in C} \theta(n) = \infty$ by Lemma~\ref{lem-NonstdUnbdd} and $(\forae n \in C)(F(\theta(n)) = k)$ by definition.  Furthermore, $k < k_0$ implies that $(\forae r \in R)(F(r) = k \;\imp\; |M_r| = r+1)$ by item~\ref{it-SmallKoutX}.  Therefore $\lim_{n \in C}|M_{\theta(n)}| = \infty$, so $\prod_C \mc{M}_{\theta(n)} \iso \omega + \zeta\eta + \omega^*$ by Lemma~\ref{lem-LimitFiniteProd}.
\end{proof}

\begin{ClaimInfShuffle}\label{claim-SolidOutX}
If $[\theta] \in |\prod_C \mc{R}|_\nonstd$ and $F^{\prod_C \mc{O}}([\theta])$ is solid color $\llb k \rrb$ for a $k > k_0$ with $k \notin X$, then, as a linear order, $\prod_C \mc{M}_{\theta(n)}$ either has type $\bm{k}_0$ or type $\omega + \zeta\eta + \omega^*$.
\end{ClaimInfShuffle}

\begin{proof}[Proof of Claim~\ref{claim-SolidOutX}]
Again, $\lim_{n \in C} \theta(n) = \infty$ by Lemma~\ref{lem-NonstdUnbdd} and $(\forae n \in C)(F(\theta(n)) = k)$ by definition.  As $k > k_0$ and $k \notin X$, either $(\forae r \in R)(F(r) = k \;\imp\; |M_r| = k_0)$ or $(\forae r \in R)(F(r) = k \;\imp\; |M_r| = r+1)$ by item~\ref{it-BigKoutX}.  The first alternative yields that $(\forae n \in C)(|M_{\theta(n)}| = k_0)$ and hence that $\prod_C \mc{M}_{\theta(n)} \iso \bm{k}_0$ by Lemma~\ref{lem-FixedFiniteProd}.  The second alternative yields that $\lim_{n \in C}|M_{\theta(n)}| = \infty$ and hence that $\prod_C \mc{M}_{\theta(n)} \iso \omega + \zeta\eta + \omega^*$ by Lemma~\ref{lem-LimitFiniteProd}.
\end{proof}

\begin{ClaimInfShuffle}\label{claim-Striped}
If $[\theta] \in |\prod_C \mc{R}|_\nonstd$ and $F^{\prod_C \mc{O}}([\theta])$ is a striped color, then, as a linear order, $\prod_C \mc{M}_{\theta(n)}$ has either type $\bm{k}_0$ or type $\omega + \zeta\eta + \omega^*$.
\end{ClaimInfShuffle}

\begin{proof}[Proof of Claim~\ref{claim-Striped}]
We have that $\lim_{n \in C} \theta(n) = \infty$ by Lemma~\ref{lem-NonstdUnbdd} and that $\lim_{n \in C} F(\theta(n)) = \infty$ because $F^{\prod_C \mc{O}}([\theta])$ is a striped color.  By inspecting the construction, we see that for $r \in R$, either $|M_r| = k_0$, $|M_r| = F(r)$, or $|M_r| = r+1$.  By cohesiveness, either $(\forae n \in C)(|M_{\theta(n)}| \leq k_0)$ or $(\forae n \in C)(|M_{\theta(n)}| > k_0)$.  If $(\forae n \in C)(|M_{\theta(n)}| \leq k_0)$, then in fact $(\forae n \in C)(|M_{\theta(n)}| = k_0)$ because $\lim_{n \in C} F(\theta(n)) = \infty$.  In this case, we have that $\prod_C \mc{M}_{\theta(n)} \iso \bm{k}_0$ by Lemma~\ref{lem-FixedFiniteProd}.  If instead $(\forae n \in C)(|M_{\theta(n)}| > k_0)$, then it must be that $\lim_{n \in C}|M_{\theta(n)}| = \infty$.  This is because $|M_{\theta(n)}|$ is either $F(\theta(n))$ or $\theta(n) + 1$ for almost every $n \in C$, and both $\lim_{n \in C} F(\theta(n)) = \infty$ and $\lim_{n \in C} \theta(n) = \infty$.  Therefore $\prod_C \mc{M}_{\theta(n)} \iso \omega + \zeta\eta + \omega^*$ by Lemma~\ref{lem-LimitFiniteProd}.
\end{proof}

If $[\theta] \in |\prod_C \mc{R}|_\std$, then there is an $r \in R$ such that $(\forae n \in C)(\theta(n) = r)$.  Therefore there is a $k > 0$ such that $(\forae n \in C)(|M_{\theta(n)}| = k)$, in which case $\prod_C \mc{M}_{\theta(n)} \iso \bm{k}$ by Lemma~\ref{lem-FixedFiniteProd}.  This means that $\mc{Z}_\std$ is a generalized sum of finite linear orders over the copy $|\prod_C \mc{R}|_\std$ of $\omega$, so $\mc{Z}_\std \iso \omega$.

Think of the sum condensation $\condSum(\mc{Z}_\nonstd)$ as being colored by $F^{\prod_C \mc{O}}$, where the block $\prod_C \mc{M}_{\theta(n)}$ corresponding to $[\theta] \in |\prod_C \mc{R}|_\nonstd$ gets color $F^{\prod_C \mc{O}}([\theta])$.  The product $\prod_C \mc{O}$ is colorful, which means that $\condSum(\mc{Z}_\nonstd) \iso |\prod_C \mc{R}|_\nonstd \iso \eta$ and that each solid color occurs densely.  By Claims~\ref{claim-SolidInX}--\ref{claim-Striped} the order-type of block $\prod_C \mc{M}_{\theta(n)}$ for $[\theta] \in |\prod_C \mc{R}|_\nonstd$ is:
\begin{itemize}
\item $\bm{k}$ if $[\theta]$ has solid color $\llb k \rrb$ with $k \in X$;

\medskip

\item $\omega + \zeta\eta + \omega^*$ if $[\theta]$ has solid color $\llb k \rrb$ with $k < k_0$ (which includes $k = 0$ because $k_0 > 0$);

\medskip

\item either $\bm{k}_0$ or $\omega + \zeta\eta + \omega^*$ if $[\theta]$ has solid color $\llb k \rrb$ with $k > k_0$ and $k \notin X$;

\medskip

\item either $\bm{k}_0$ or $\omega + \zeta\eta + \omega^*$ if $[\theta]$ has a striped color.
\end{itemize}
Therefore $\mc{Z}_\nonstd \;\iso\; \bm{\sigma}(X \cup \{\omega + \zeta\eta + \omega^*\})$.  Thus
\begin{align*}
\prod\nolimits_C \mc{L} \quad\iso\quad \mc{Z} \quad\iso\quad \mc{Z}_\std + \mc{Z}_\nonstd \quad\iso\quad \omega + \bm{\sigma}(X \cup \{\omega + \zeta\eta + \omega^*\})
\end{align*}
as desired.
\end{proof}

\begin{Lemma}\label{lem-ShuffleBcSig2}
Let $X \subseteq \Nb \setminus \{0\}$ be a Boolean combination of $\Sigma_2$ sets, thought of as a set of finite order-types.  Let $\mc{O}$ be a computable colored copy of $\omega$.  There is a computable copy $\mc{L}$ of $\omega$ (constructed from $\mc{O}$) such that for every cohesive set $C$, if $\prod_C \mc{O}$ is colorful, then $\prod_C \mc{L}$ has order-type $\omega + \bm{\sigma}(X \cup \{\omega + \zeta\eta + \omega^*\})$.
\end{Lemma}

\begin{proof}
The $X = \emptyset$ case is Lemma~\ref{lem-SuffleZero}, so we may assume that $X \neq \emptyset$.  Let $k_0$ be the least element of $X$.  By putting $X$ in disjunctive normal form and noticing that finite intersections of $\Sigma_2$ sets are $\Sigma_2$ and that finite intersections of $\Pi_2$ sets are $\Pi_2$, we may write $X$ as a finite union $X = \bigcup_{i < N} X_i$, where each $X_i$ is the intersection of a $\Sigma_2$ set and a $\Pi_2$ set.  It is convenient to further assume that $k_0 \in X_i$ for each $i < N$.

Let $\mc{O} = (R, \Nb, \prec_\mc{R}, G)$ be a computable colored copy of $\omega$, and let $\mc{R}$ denote $(R, \prec_\mc{R})$.  We define a uniformly computable sequence $(\mc{M}_r : r \in R)$ of $\mf{O}$\nobreakdash-structures that are finite models of $\Gamma$.  Let $\la \cdot, \cdot \ra \colon \{0, \dots, N-1\} \times \Nb \imp \Nb$ be a computable bijection with computable projections $\pi_0$ and $\pi_1$.  Compute each $\mc{M}_r$ as in the proof of Lemma~\ref{lem-ShuffleSig2CapPi2}, but for set $X_{\pi_0(G(r))}$ and color $F(r) = \pi_1(G(r))$.

Let $\mc{L}$ be the generalized sum $\sum_{r \in R} \mc{M}_r$, as in the proof of Lemma~\ref{lem-ShuffleSig2CapPi2}.  Then $\mc{L}$ is a computable copy of $\omega$.  Let $C$ be a cohesive set for which $\prod_C \mc{O}$ is colorful.  Then again $\prod_C \mc{L} \;\iso\; \mc{Z}_\std + \mc{Z}_\nonstd$, where $|\prod_C \mc{R}|_\std$ and $|\prod_C \mc{R}|_\nonstd$ denote the standard and non-standard parts of $\prod_C \mc{R}$,
\begin{align*}
\mc{Z}_\std \quad&=\quad \sum_{[\theta] \in \left|\prod_C \mc{R}\right|_\std}\prod\nolimits_C \mc{M}_{\theta(n)}\\ \\
\mc{Z}_\nonstd \quad&=\quad \sum_{[\theta] \in \left|\prod_C \mc{R}\right|_\nonstd}\prod\nolimits_C \mc{M}_{\theta(n)},
\end{align*}
and $\mc{Z}_\std \iso \omega$.

Again, think of $\condSum(\mc{Z}_\nonstd)$ as being colored by $G^{\prod_C \mc{O}}$, where the block $\prod_C \mc{M}_{\theta(n)}$ corresponding to $[\theta] \in |\prod_C \mc{R}|_\nonstd$ gets color $G^{\prod_C \mc{O}}([\theta])$.  The product $\prod_C \mc{O}$ is colorful, which means that $\condSum(\mc{Z}_\nonstd) \iso |\prod_C \mc{R}|_\nonstd \iso \eta$ and that each solid color occurs densely.  The order-type of block $\prod_C \mc{M}_{\theta(n)}$ for each $[\theta] \in |\prod_C \mc{R}|_\nonstd$ can be determined as in the proof of Lemma~\ref{lem-ShuffleSig2CapPi2}.
\begin{itemize}
\item If $G^{\prod_C \mc{O}}([\theta])$ is solid color $\llb \la i, k \ra \rrb$ where $k \in X_i$, then $\prod_C \mc{M}_{\theta(n)} \iso \bm{k}$.

\medskip

\item If $G^{\prod_C \mc{O}}([\theta])$ is solid color $\llb \la i, k \ra \rrb$ where $k < k_0$, then $\prod_C \mc{M}_{\theta(n)} \iso \omega + \zeta\eta + \omega^*$.

\medskip

\item If $G^{\prod_C \mc{O}}([\theta])$ is solid color $\llb \la i, k \ra \rrb$ where $k > k_0$ and $k \notin X_i$, then $\prod_C \mc{M}_{\theta(n)}$ either has order-type $\bm{k}_0$ or order-type $\omega + \zeta\eta + \omega^*$.

\medskip

\item Suppose that $G^{\prod_C \mc{O}}([\theta])$ is a striped color.  As $\pi_0(G(r)) < N$ for every $r \in R$, the cohesiveness of $C$ implies that there is an $i < N$ such that $(\forae n \in C)(\pi_0(G(\theta(n))) = i)$.  Therefore $\lim_{n \in C} \pi_1(G(\theta(n))) = \infty$ because $G^{\prod_C \mc{O}}([\theta])$ is striped, and so $\prod_C \mc{M}_{\theta(n)}$ either has order-type $\bm{k}_0$ or order-type $\omega + \zeta\eta + \omega^*$ as in the proof of Lemma~\ref{lem-ShuffleSig2CapPi2}.
\end{itemize}
Therefore $\mc{Z}_\nonstd \;\iso\; \bm{\sigma}(\bigcup_{i < N}X_i \cup \{\omega + \zeta\eta + \omega^*\}) = \bm{\sigma}(X \cup \{\omega + \zeta\eta + \omega^*\})$.  Thus $\prod_C \mc{L} \;\iso\; \omega + \bm{\sigma}(X \cup \{\omega + \zeta\eta + \omega^*\})$ as desired.
\end{proof}

We end with the main result of this section by combining Theorem~\ref{thm-ColorsDenseNonstd} with Lemmas~\ref{lem-ShuffleFinite} and~\ref{lem-ShuffleBcSig2}.

\begin{Theorem}\label{thm-BCSigma2Shuffle}
Let $X \subseteq \Nb \setminus \{0\}$ be a Boolean combination of $\Sigma_2$ sets, thought of as a set of finite order-types.  Let $C$ be a co-c.e.\ cohesive set.  Then there is a computable copy $\mc{L}$ of $\omega$ where the cohesive power $\prod_C \mc{L}$ has order-type $\omega + \bm{\sigma}(X \cup \{\omega + \zeta\eta + \omega^*\})$.  Moreover, if $X$ is finite and non-empty, then there is also a computable copy $\mc{L}$ of $\omega$ where the cohesive power $\prod_C \mc{L}$ has order-type $\omega + \bm{\sigma}(X)$.
\end{Theorem}

\begin{proof}
Let $C$ be a co-c.e.\ cohesive set.  By Theorem~\ref{thm-ColorsDenseNonstd}, let $\mc{O}$ be a computable colored copy of $\omega$ such that $\prod_C \mc{O}$ is colorful.  Let $X \subseteq \Nb \setminus \{0\}$ be a Boolean combination of $\Sigma_2$ sets.  Let $\mc{L}$ be the computable copy of $\omega$ constructed from $\mc{O}$ for $X$ as provided by Lemma~\ref{lem-ShuffleBcSig2}.  Then $\prod_C \mc{L} \;\iso\; \omega + \bm{\sigma}(X \cup \{\omega + \zeta\eta + \omega^*\})$.  If $X$ is finite and non-empty, then we may alternatively apply Lemma~\ref{lem-ShuffleFinite} instead of Lemma~\ref{lem-ShuffleBcSig2} to obtain a computable copy $\mc{L}$ of $\omega$ with $\prod_C \mc{L} \;\iso\; \omega + \bm{\sigma}(X)$.
\end{proof}

\section*{Acknowledgments}
We thank our anonymous reviewers for their many thoughtful suggestions of generalizations and alternative proofs that greatly improved the cohesiveness of this work.
This project was partially supported by 
the John Templeton Foundation grant ID 60842 (Shafer);
EPSRC grant EP/T031476/1 (Shafer);
the University of Leeds School of Mathematics Research Visitors' Centre (Dimitrov, Shafer);
the FWO Pegasus program (Shafer);
FWO/BAS project VS09816 (Shafer, Soskova);
BNSF, KP-06-Austria-04/19 (Soskova, Vatev);
SU, FNI Grant \#80-10-128/16.04.2020 (Soskova, Vatev);
NSF FRG grant DMS-2152095 (Harizanov);
Simons Foundation Grant \#581896 (Harizanov);
Simons Foundation Grant \#853762 (Harizanov);
and
NSF grant DMS-1600625 (Dimitrov, Harizanov, Morozov, Soskova, Vatev).
The opinions expressed in this work are those of the authors and do not necessarily reflect the views of the John Templeton Foundation.

\bibliographystyle{amsplain}
\bibliography{PowersOfOmega}

\newpage

\vfill

\end{document}